\documentclass[11pt, twoside]{article}

\usepackage[english]{babel}

\usepackage{amssymb}
\usepackage{mathrsfs}
\usepackage{amsmath}
\usepackage{amsthm}
\usepackage{amsfonts}
\usepackage{latexsym}
\usepackage{indentfirst}
\usepackage{color}
\usepackage{txfonts}
\usepackage{enumerate}

\usepackage{anysize}

\usepackage{tikz}

\usepackage[colorlinks=true,
linkcolor=blue,
citecolor=red,
urlcolor=magenta,
backref=page
]{hyperref}

\allowdisplaybreaks

\hypersetup{pdftitle={Sharp Nonlocal Poincar\'e Inequalities and Liminf Lower Bounds
for Sobolev and BV Energies on Metric Measure Spaces},
pdfauthor={Feng Dai, Tuomas P. Hyt\"onen, Dachun Yang, Wen Yuan and Yirui Zhao}}

\newtheorem{theorem}{Theorem}[section]
\newtheorem{lemma}[theorem]{Lemma}
\newtheorem{corollary}[theorem]{Corollary}
\newtheorem{proposition}[theorem]{Proposition}
\theoremstyle{definition}
\newtheorem{remark}[theorem]{Remark}

\numberwithin{equation}{section}

\begin{document}
\title{\bf\Large Sharp Nonlocal Poincar\'e Inequalities and Liminf Lower Bounds
for Sobolev and BV Energies on Metric Measure Spaces
\footnotetext{\hspace{-0.26cm}2020
\emph{Mathematics Subject Classification}. Primary 46E36;
Secondary 30L15, 26D10, 26B30.
\endgraf\emph{Key words and phrases}.
nonlocal Poincar\'e inequality, nonlocal nonconvex functional,
Sobolev function, function of bounded variation,
metric measure space.
\endgraf
This project is partially supported by the
National Natural Science Foundation of China
(Grant Nos. 12431006, 12371093, and 124B2004), the Beijing Natural
Science Foundation (Grant No. 1262011), the Fundamental Research
Funds for the Central Universities (Grant No. 2253200028), and
the Research Council of Finland (Grant Nos. 364208 and 371637).
Feng Dai is also supported by the NSERC of Canada Discovery Grant
(Grant No. RGPIN 2026-05667).}}
\author{Feng Dai, Tuomas P. Hyt\"onen, Dachun Yang,
%\thanks{Corresponding author, E-mail: \texttt{dcyang@bnu.edu.cn}.}
Wen Yuan and Yirui Zhao}
%\date{}
\maketitle
\vspace{-0.7cm}
\begin{center}
\begin{minipage}{13cm}
{\small {\bf Abstract.}\quad
Let $p\in[1,\infty)$, and let
$(X,\rho,\mu)$ be a complete doubling $q$-Poincar\'e space for some
$q\in[1,p]$ with a lower dimension $d$ and an upper dimension $D$.
In this article, we establish Poincar\'e inequalities for two families
of nonlocal nonconvex functionals defined using distances or
volumes.
More precisely, for any ball $B$ with its radius
$r_B\in(0,\operatorname{diam} X)$, any $\lambda\in(0,\infty)$, and any
measurable function $f:X\to\mathbb R$,
\begin{align*}
 &\fint_B\fint_B|f(x)-f(y)|^p\,d\mu(x)\,d\mu(y)\\
 &\quad\lesssim\lambda^pr_B^p\phi(B)^{\gamma p}
 +\frac{\lambda^pr_B^p}{\mu(B)}
 \mathop{\int_{\Lambda B}\int_{\Lambda B}}\limits_{
 \genfrac{}{}{}{2}{|f(x)-f(y)|}{\rho(x,y)\phi(x,y)^\gamma}>\lambda}
 \frac{\phi(x,y)^{\gamma p}}{V(x,y)}\,d\mu(x)\,d\mu(y),
\end{align*}
where $V(x,y):=\mu(B(x,\rho(x,y)))$ and either $\phi=\rho$ with
$\phi(B)=r_B$ and $\gamma\in(-\infty,-\frac{\min\{q,D\}}{p}]$
or $\phi=V$ with $\phi(B)=\mu(B)$ and
$\gamma\in(-\infty,-\frac{q}{p\max\{q,d\}}]$.
Here both ranges of $\gamma$ are sharp over the class of $q$-Poincar\'e spaces
with the prescribed lower and upper dimensions, and the constant $\Lambda\in[1,\infty)$
and the implicit positive constant depend only on $p$, $\gamma$,
and the geometric constants of $X$.
We also obtain lower bounds for Sobolev energy and total variation
in terms of the limit inferior of these functionals, for $\gamma$ in either the above negative ranges or
$(0,\infty)$.
Our proofs use the curve characterization of Poincar\'e spaces
and an adaptive dyadic refinement argument based on medians.
}
\end{minipage}
\end{center}
\vspace{0.1cm}

\tableofcontents

\section{Introduction}\label{sec:main-results}

The Poincar\'e inequality is a basic tool in analysis on metric measure
spaces. It controls the mean oscillation of a function by its upper
gradients and connects the geometry of a space with the first-order
regularity of functions on it; see, for example,
\cite{HeinonenKoskela98,bb11,HKST}.
Beyond the gradient, some variants of Poincar\'e inequalities
involving nonlocal approximations of Sobolev seminorms and total
variation have been extensively studied in Euclidean spaces and
extended to non-Euclidean settings;
see, for example,
\cite{Barbieri2011,BBM2001,BBM2002,BrezisNguyenImages,LinEtAlCompactness2026,MallickNguyen2020,
Nguyen2011,NguyenSquassina2019,Picenni2026,Ponce2004}.
In this article, we study Poincar\'e inequalities for a family of
nonlocal nonconvex functionals in complete doubling Poincar\'e
spaces and derive the corresponding liminf lower bounds for Sobolev energy and total
variation.

\subsection{Background}
We first recall the Euclidean functionals introduced by Nguyen
\cite{Nguyen2006} and further studied in
\cite{BourgainNguyen2006,BrezisNguyenImages}.
For $p\in[1,\infty)$,
$\lambda\in(0,\infty)$, a measurable set $E\subset\mathbb R^N$,
and a measurable function $u:E\to\mathbb R$, consider
\begin{equation}\label{eq:nguyen-functional}
 I_{\lambda,p}(u;E)
 :=\lambda^p
 \iint_{\genfrac{}{}{0pt}{}{x,y\in E}{|u(x)-u(y)|>\lambda}}
 \frac{dx\,dy}{|x-y|^{N+p}}.
\end{equation}
For $p\in(1,\infty)$, the behavior of $I_{\lambda,p}$
on $\mathbb R^N$ as
$\lambda\to0^+$ characterizes $W^{1,p}(\mathbb R^N)$ among
functions in $L^p(\mathbb R^N)$; see
\cite{BourgainNguyen2006,Nguyen2006}.
Nguyen \cite[Theorem~1(a)]{Nguyen2011} further established a
Poincar\'e inequality with $I_{\lambda,p}$ in place of the gradient
energy. More precisely, for
$p\in[1,\infty)$, $\lambda\in(0,\infty)$, a ball $B\subset\mathbb R^N$,
and a measurable function $u:B\to\mathbb R$,
\begin{equation}\label{eq:nguyen-poincare}
 \fint_B\fint_B|u(x)-u(y)|^p\,dx\,dy
 \lesssim\lambda^p+r_B^{p-N}I_{\lambda,p}(u;B),
\end{equation}
where $r_B$ denotes the radius of $B$, $\fint_B$ denotes the integral
average over $B$, and the implicit positive constant depends only on
$N$ and $p$.
This inequality requires no a priori smoothness of $u$ and recovers
the classical Poincar\'e inequality for $u\in C_{\mathrm c}^\infty(B)$
by letting $\lambda\to0^+$. Nguyen
\cite[p.\,489]{Nguyen2011} noted that similar results in
Carnot--Carath\'eodory spaces or in metric spaces with appropriate
properties would be desirable.
For further results on asymptotic formulae and characterizations
associated with $I_{\lambda,p}$ and related functionals, including
extensions to non-Euclidean settings, we refer to
\cite{BrezisNguyenImages,BrezisNguyen2020,CuiLamLu2014,
DiMarinoSquassina19,LahtiPinamontiZhou2024,Nguyen2008,
NguyenEtAlMagnetic2018,NguyenSquassinaAnisotropic2019};
for applications of Nguyen's Poincar\'e inequality, we refer to
\cite{BrezisNguyenVMO,BrezisNguyenImages,MallickNguyen2020,
Nguyen2011,NguyenDegree2017,NguyenSquassina2019}.

By introducing a real parameter $\gamma$, Brezis et al.\
\cite{BSVY} placed the functionals $I_{\lambda,p}$ in
\eqref{eq:nguyen-functional} within a broader
framework.
More precisely, for $N\in\mathbb N$, $p\in[1,\infty)$,
$\lambda\in(0,\infty)$, $\gamma\in\mathbb R$, a measurable set
$E\subset\mathbb R^N$, and a measurable function $u:E\to\mathbb R$,
their functionals are defined by
\begin{equation}\label{eq:euclidean-functional}
 \Phi_{\lambda,p}^{\gamma}(u;E)
 :=\lambda^p
 \iint_{\genfrac{}{}{0pt}{}{x,y\in E}{
 |u(x)-u(y)|>\lambda|x-y|^{1+\gamma}}}
 |x-y|^{p\gamma-N}\,dx\,dy.
\end{equation}
The case $\gamma=-1$ gives $I_{\lambda,p}$, while
$\gamma=\frac{N}{p}$ corresponds to the  weak-type
representation of Sobolev seminorms obtained in
\cite{BVY2021}.
For $\gamma\in\mathbb R\setminus\{0\}$ and
$u\in C_{\mathrm c}^{\infty}(\mathbb R^N)$,
Brezis et al.\ \cite[Theorem~1.1]{BSVY} proved the limit formula
\[
 \lim_{\lambda\to L_\gamma}
 \Phi_{\lambda,p}^{\gamma}(u;\mathbb R^N)
 =\frac{K_{N,p}}{p|\gamma|}
 \int_{\mathbb R^N}|\nabla u(x)|^p\,dx,
\]
where $\lambda\to L_\gamma$ means that $\lambda\to0^+$ when
$\gamma\in(-\infty,0)$ and $\lambda\to\infty$ when
$\gamma\in(0,\infty)$.
Here $K_{N,p}:=\int_{\mathbb S^{N-1}}|\omega_1|^p\,d\sigma(\omega)$
with $\sigma$ denoting surface measure.
These formulae have stimulated extensive research; see, for example,
\cite{BVYLorentz2021,DaiEtAlOperators2026,DaiEtAlBall2023,
DominguezMilman,DominguezEtAlBesov2023,Frank2024,
HuEtAlHigherOrder2025,LiEtAlWeighted2026,Poliakovsky2022,
ZhaoEtAlFrank2024}
for extensions beyond difference quotients and classical Sobolev
spaces and
\cite{dlyyz26b,DaiEtAlMetric2022,HanXuZhu2025,HK:W1p,
HytonenYangYuanZhao2026} for developments on metric measure spaces.

Two questions concerning the nonlocal nonconvex functionals
\eqref{eq:euclidean-functional} are particularly relevant here.
Brezis et al.\ \cite[Section~7B]{BSVY} asked whether the corresponding
limit inferior controls the Sobolev seminorm or the total variation
without an a priori Sobolev or bounded variation assumption; see also
\cite[Open Question~14]{NguyenSurvey}.
Nguyen \cite[Open Question~33]{NguyenSurvey} also asked whether
\eqref{eq:nguyen-poincare} extends to the functionals in
\eqref{eq:euclidean-functional}.
In Euclidean spaces, Gobbino and Picenni
\cite[Theorem~1.1]{gp25} resolved the liminf question for
$\gamma\in(0,\infty)$.
Nguyen \cite[Theorem~16]{NguyenSurvey} proved the liminf lower bounds
for $\gamma\in[-1,-\frac1p]$ and $u\in L^p(\mathbb R^N)$,
and the remaining range $\gamma\in(-\infty,-1)$ was treated by
Lin et al.\ \cite[Theorem~1.4(ii)]{LinEtAlFullRange2026}
and Picenni \cite[Corollary~1.2]{Picenni2026}.
When $\gamma\in(-\frac1p,0)$, however,
$\Phi_{\lambda,p}^{\gamma}(\mathbf1_{(0,1)^N};\mathbb R^N)\to0$
as $\lambda\to0^+$, so the lower bound fails; see
\cite[Proposition~17]{NguyenSurvey}.
For the generalized Poincar\'e inequality involving
$\Phi_{\lambda,p}^{\gamma}$, Dai et al.\ \cite{DYYZTruncated26}
showed that it holds
for $\gamma\in(-\infty,-\frac1p]$ and that this range of $\gamma$
is optimal, using sharp truncated difference estimates
which also give a unified proof of the aforementioned
liminf lower bounds.
Picenni \cite[Theorem~1.3]{Picenni2026} also proves a
Poincar\'e inequality on bounded convex Euclidean domains for
$\gamma<-1$ in the normalization used in this article.
For $\gamma>0$, the corresponding Poincar\'e inequality with an
additive $\lambda^p$ term fails even on a fixed Euclidean domain;
\cite[Proposition~4.4 and Theorem~1.4]{Picenni2026} gives the
obstruction and an oscillation-dependent replacement.
Our complementary truncated estimate in
Lemma~\ref{lem:curve-local-estimates}(ii) supplies the lower bound
for positive parameters without asserting \eqref{eq:NPI} in that range.

However, the picture for these two questions on metric measure
spaces remains incomplete.
Di Marino and Squassina
\cite[Theorem~1.5]{DiMarinoSquassina19} showed that, on complete
doubling Poincar\'e spaces, the limit superior of the
Nguyen-type functionals characterizes Sobolev spaces with
$p\in(1,\infty)$.
Di Marino and Squassina \cite[Open Problem~1.8]{DiMarinoSquassina19}
asked whether, for $f\in L^1(\mu)$, the finiteness of the limit inferior
of the Nguyen-type energies along a sequence $\lambda_j\to0^+$
implies $f\in\mathrm{BV}(\mu)$ with total variation bounded by that limit
inferior. Their formulation allows a doubling kernel comparable to
$V(x,y)$.
On complete connected Ahlfors $Q$-regular spaces with $Q>1$,
Lahti et al.\ \cite[Theorem~1.4]{LahtiPinamontiZhou2024} bounded
Sobolev energy and total variation by the limit superior of more
general nonconvex functionals on bounded open sets.

In this article, we complete the aforementioned picture by establishing
Poincar\'e inequalities with sharp parameter ranges and the
corresponding liminf lower bounds on complete
doubling Poincar\'e spaces.
As a special case, we answer the problem posed by Di Marino and
Squassina \cite[Open Problem~1.8]{DiMarinoSquassina19} affirmatively.
In Euclidean spaces, our results recover the known sharp results.
The new contribution is the extension to general doubling Poincar\'e
spaces and the sharp dependence of the negative parameter ranges on
the Poincar\'e and volume-growth exponents.

\subsection{Main Results}

Let $(X,\rho,\mu)$ be a complete metric measure space, where
$\mu$ is a Borel measure that is finite and positive on balls and satisfies
the doubling condition; that is,
there exists $C_D\in[1,\infty)$ such that
\begin{equation}\label{eq:doubling}
0<\mu(B(x,2r))\leq C_D\mu(B(x,r))<\infty
\end{equation}
for any $x\in X$ and $r\in(0,\infty)$,
where the \emph{ball $B(x,r):=\{y\in X: \rho(x,y)<r\}$}. For any $x,y\in X$ and
$r\in(0,\infty)$, write
$V(x,r):=\mu(B(x,r))\ \text{and}\ V(x,y):=V(x,\rho(x,y)).$

The doubling condition implies that there exist
$D\in(0,\infty)$ and $C_{\mathrm u}\in[1,\infty)$ such that
\begin{equation}\label{eq:volume-upper-growth}
 V(x,R)\leq C_{\mathrm u}
 \left(\frac{R}{r}\right)^D V(x,r)
\end{equation}
for any $x\in X$ and $0<r\leq R<\infty$.
We fix one such exponent $D$ and call it an \emph{upper dimension}
of $\mu$.
Throughout this article,  we assume  that $X$ contains  at least two distinct points
and $\mu(\{x\}) = 0$ for any $x\in X$.

A nonnegative Borel function $g$ is an \emph{upper gradient} of a
function $u:X\to\mathbb R$ if, for any nonconstant rectifiable curve
$\eta:[0,\ell_\eta]\to X$ parametrized by arclength,
\[
 |u(\eta(0))-u(\eta(\ell_\eta))|
 \leq\int_0^{\ell_\eta}g(\eta(t))\,d t.
\]
For $q\in[1,\infty)$, the space $(X,\rho,\mu)$ is said to support a
\emph{weak $(1,q)$-Poincar\'e inequality} if there exist constants
$C_P\in(0,\infty)$ and $\Lambda_P\in[1,\infty)$ such that,
for any ball $B$, any $u\in L^1_{\mathrm{loc}}(\mu)$,
and any upper gradient $g$ of $u$,
\begin{equation}\label{PS}
 \fint_B|u-u_B|\,d\mu
 \leq C_Pr_B
 \left(\fint_{\Lambda_PB}g^q\,d\mu\right)^{\frac1q}.
\end{equation}
Here, and thereafter, $r_B$ denotes the radius of the ball $B$ and
$$u_B:=\fint_Bu\,d\mu:=\frac{1}{\mu(B)}\int_B u\,d\mu.$$
A complete doubling metric measure
space satisfying inequality \eqref{PS} is called a
\emph{$q$-Poincar\'e space}; cf. \cite{bb11,HKST}.
For any nonempty set $E\subset X$, let
$\operatorname{diam} E:=\sup\{\rho(x,y):x,y\in E\}$ denote its diameter.
We say that $\mu$ is \emph{reverse doubling} if there exist constants
$a,C_0\in(1,\infty)$ such that
\begin{equation*}
 V(x,ar)\geq C_0V(x,r)
\end{equation*}
for any $x\in X$ and
$r\in(0,\frac{\operatorname{diam} X}{a})$.  The reverse doubling condition implies
that there exist $d\in(0,\infty)$ and $c_d\in(0,1]$ such that
\begin{equation}\label{eq:volume-reverse-growth}
 V(x,\lambda r)\geq c_d\lambda^dV(x,r)
\end{equation}
for any $x\in X$, $r\in(0,\frac{\operatorname{diam} X}{3})$, and
$\lambda\in[1,\frac{\operatorname{diam} X}{3r})$. We fix one exponent
$d\in(0,\infty)$ for which \eqref{eq:volume-reverse-growth} holds,
and call $d$ a \emph{lower dimension} of $\mu$.
Comparing \eqref{eq:volume-upper-growth} and \eqref{eq:volume-reverse-growth} as $r\to0^+$ gives $d\leq D$.

It is known that any Poincar\'e space is
connected; see \cite[Proposition~4.2]{bb11}.  Consequently, any
doubling Poincar\'e space is reverse doubling by
\cite[Corollary~3.8]{bb11}.
For convenience, we refer to the doubling constant $C_D$ in
\eqref{eq:doubling},
the exponent $q$ and the constants $\Lambda_P$ and $C_P$ in \eqref{PS},
the lower dimension $d$ and the constant $c_d$ in
\eqref{eq:volume-reverse-growth}, and the upper dimension $D$
and the constant $C_{\mathrm u}$ in \eqref{eq:volume-upper-growth}
as the \emph{geometric constants} of the space $X$.

Let $p\in[1,\infty)$ and $f:X\to \mathbb R$ be measurable.
A nonnegative measurable function $g$ is called a
\emph{Haj\l asz upper gradient} of $f$ if,
for almost every $x,y\in X$,
\begin{align}\label{eq:H-gradient}
|f(x)-f(y)|\leq\rho(x,y)[g(x)+g(y)].
\end{align}
The \emph{homogeneous Haj\l asz--Sobolev space $\dot M^{1,p}(\mu)$}
consists of all measurable
functions $f$ on $X$ admitting a Haj\l asz upper gradient in $L^p(\mu)$, and its
seminorm is given by
\begin{equation*}
 \|f\|_{\dot M^{1,p}(\mu)}
 :=\inf_g\|g\|_{L^p(\mu)},
\end{equation*}
where the infimum is taken over all such Haj\l asz gradients; see, for
example, \cite{HKST}.

For a locally Lipschitz function $u$, write
\[
 \operatorname{lip} u(x):=\liminf_{r\to0^+}\frac1r
 \sup_{y\in B(x,r)}|u(x)-u(y)|, \quad x\in X.
\]
For $f\in L^1_{\mathrm{loc}}(\mu)$,
its \emph{total variation} is defined by
\begin{equation}\label{eq:variation-definition}
 |Df|(X):=\inf_{\{f_j\}_{j\in\mathbb N}}
 \liminf_{j\to\infty}\int_X\operatorname{lip} f_j\,d\mu,
\end{equation}
where the infimum is taken over all sequences
$\{f_j\}_{j\in\mathbb N}\subset\operatorname{Lip}_{\mathrm{loc}}(X)$ converging to
$f$ in $L^1_{\mathrm{loc}}(\mu)$ as $j\to\infty$.  We define
$\dot{\mathrm{BV}}(\mu):=\{f\in L^1_{\mathrm{loc}}(\mu):|Df|(X)<\infty\}$; see,
for example, \cite{Miranda03}.
For $p\in[1,\infty)$ and $f\in L^1_{\mathrm{loc}}(\mu)$, define
\[
 \mathcal E_p(f;X):=
 \begin{cases}
 \|f\|_{\dot M^{1,p}(\mu)}^p
 &\text{if }p\in(1,\infty)\ \text{and}\ f\in\dot M^{1,p}(\mu),\\
 |Df|(X)&\text{if }p=1\ \text{and}\ f\in\dot{\mathrm{BV}}(\mu),\\
 \infty&\text{otherwise}.
 \end{cases}
\]

In this article, we study the following metric and volume
functionals.
More precisely, for $p\in[1,\infty)$, $\gamma\in\mathbb R$,
$\phi\in\{\rho,V\}$, $\lambda\in(0,\infty)$,
a measurable set $E\subset X$, and a measurable function
$f:X\to\mathbb R$, define
\begin{equation}\label{eq:energy}
 \mathcal E_{\lambda,\gamma,p}^\phi(f;E)
 :=\lambda^p
 \iint_{\genfrac{}{}{0pt}{}{x,y\in E}{
 |f(x)-f(y)|>\lambda\rho(x,y)\phi(x,y)^{\gamma}}}
 \frac{\phi(x,y)^{\gamma p}}{V(x,y)}
 \,d\mu(x)\,d\mu(y),
\end{equation}
where the integrand is understood to vanish on the diagonal and
the integral may be infinite.
For a ball $B$, we also write
\[
 \phi(B):=
 \begin{cases}
  r_B&\text{if }\phi=\rho,\\
  \mu(B)&\text{if }\phi=V.
 \end{cases}
\]
The two formulations are equivalent up to constants and a change of
parameters on Ahlfors regular spaces, but need not be so in general.
Thus, we retain these two formulations in this article.

Our first main result gives new Poincar\'e inequalities for nonlocal nonconvex functionals
in \eqref{eq:energy}.

\begin{theorem}\label{thm:NPI}
Let $p\in[1,\infty)$, $(X,\rho,\mu)$ be a complete doubling $q$-Poincar\'e space for some
$q\in[1,p]$ with a lower dimension $d$ and an upper dimension $D$,
and let $\phi\in\{\rho,V\}$. If either $\phi=\rho$ and
$\gamma\in(-\infty,-\frac{\min\{q,D\}}{p}]$ or $\phi=V$ and
$\gamma\in(-\infty,-\frac{q}{p\max\{q,d\}}]$,
then there exist constants $C\in(0,\infty)$ and
$\Lambda\in[1,\infty)$, depending only on $p,\gamma$, and the
geometric constants of $X$, such that
\begin{align}
 \fint_B\fint_B|f(x)-f(y)|^p
 \,d\mu(x)\,d\mu(y)\leq C\lambda^p r^p\phi(B)^{\gamma p}
 +C\frac{r^p}{\mu(B)}
 \mathcal E^\phi_{\lambda,\gamma,p}(f;\Lambda B)
 \label{eq:NPI}
\end{align}
for any ball $B$ with its radius
$r\in(0,\operatorname{diam} X)$, any $\lambda\in(0,\infty)$, and any
measurable function $f:X\to\mathbb R$.
\end{theorem}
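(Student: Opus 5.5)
We argue by contradiction-free direct construction: fix $B$, $\lambda$, $f$, and first reduce to bounded $f$ by truncation at levels $\pm M$ (truncation does not increase $|f(x)-f(y)|$, so the energy on the right of \eqref{eq:NPI} only decreases, and the left side converges monotonically by Fatou). By homogeneity ($f\mapsto f/(\lambda r\phi(B)^\gamma)$ together with rescaling) we may normalize so that the additive term is $r^p\lambda^p\phi(B)^{\gamma p}=1$; the task is then to show that the $p$-oscillation of $f$ on $B$ is bounded by a constant plus $r^p\mu(B)^{-1}\mathcal E^\phi_{\lambda,\gamma,p}(f;\Lambda B)$.

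The core is a ``good pairs'' dichotomy at every scale. Call a pair $(x,y)$ \emph{bad} if $|f(x)-f(y)|>\lambda\rho(x,y)\phi(x,y)^\gamma$. For a dyadic-type ball $Q$ of radius $s\le r$ inside $\Lambda B$, compare a median $m_Q$ of $f$ on $Q$ with medians on its children. Either the bad pairs in $Q\times Q$ occupy at least a fixed fraction $\varepsilon$ of $\mu(Q)^2$, in which case $Q$ contributes at least $\varepsilon\lambda^p\mu(Q)\,\phi(Q)^{\gamma p}$ to the energy (using $V(x,y)\lesssim\mu(Q)$ and $\phi(x,y)\lesssim\phi(Q)$ with $\gamma<0$ giving $\phi(x,y)^{\gamma p}\gtrsim\phi(Q)^{\gamma p}$), or most pairs are good, whence medians of $f$ on $Q$ and on each child differ by at most $\lesssim\lambda s\phi(Q)^\gamma$. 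The adaptive refinement: we stop subdividing a cube once it is ``bad'' and charge its oscillation to the energy; on ``good'' chains of cubes we sum the median jumps geometrically. Since $\gamma<0$, the bound $\lambda s\phi(Q)^\gamma$ does \emph{not} decay as $s\to0$, so a plain telescoping sum diverges; this is where the curve characterization of $q$-Poincar\'e spaces enters. Instead of telescoping along nested cubes, we join typical $x,y\in B$ by a family of quasi-geodesic curves (or a measure on curves with $q$-integrable density bounds, as in the Semmes pencil/curve-family characterization of Poincar\'e spaces) and discretize each curve by a chain of balls of radius $s$; good chains give $|m_{Q_i}-m_{Q_{i+1}}|\lesssim\lambda s\phi(Q)^\gamma$, and the number of links is $\sim r/s$, so at scale $s$ the oscillation cost is $\lambda r\phi(Q_s)^\gamma$, while bad links are charged to energy.

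The balance is then: at each scale $s=2^{-k}r$ the function is, up to a bad set, a ``$\lambda\phi^\gamma$-Lipschitz'' function at resolution $s$, and the union of bad balls at scale $s$ has measure controlled by energy$/(\lambda^p\phi(Q_s)^{\gamma p})$. Choosing, for each point, the coarsest scale at which its chain is good (the adaptive stopping), and using the curve-density bound to estimate how much of the curve family meets bad balls, one obtains an $L^p$ bound where the contribution of scale $s$ is weighted by $(s/r)^{q}$-type factors from the $q$-modulus of curves passing through a ball of radius $s$ versus the factor $\phi(Q_s)^{\gamma p}/\phi(B)^{\gamma p}$, which is $(s/r)^{\gamma p}$ for $\phi=\rho$ (comparing with $(s/r)^{D}$ through volumes) and $(\mu(Q_s)/\mu(B))^{\gamma p}\ge c(s/r)^{d\gamma p}$ for $\phi=V$. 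Summability of the resulting geometric series over $k$ is exactly the condition $\gamma p\le-\min\{q,D\}$, respectively $\gamma p\max\{q,d\}\le -q$; the endpoint case requires that each point be charged at only one (its stopping) scale, so that no logarithmic loss appears.

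The main obstacle is this last step: controlling, uniformly along the curve family, the interaction between bad balls at many scales and the curve density, so that the sum over scales is bounded by the energy without a $\log$ loss at the endpoint $\gamma=-\min\{q,D\}/p$. I would first try to prove a pointwise truncated estimate of the form $|f(x)-m_B|\lesssim\lambda r\phi(B)^\gamma+\big(\sum_k (\text{bad-mass at scale }2^{-k}r\text{ near curves from }x)\big)$, then integrate in $x$ using a maximal-function/Fubini argument with the doubling and reverse-doubling bounds \eqref{eq:volume-upper-growth}, \eqref{eq:volume-reverse-growth}, and finally convert $\fint_B|f-m_B|^p$ into the double average on the left of \eqref{eq:NPI}.
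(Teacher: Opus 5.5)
Your proposal is a heuristic outline, and the step you flag as ``the main obstacle'' is where the proof actually lives. The bookkeeping you set up around that step also runs the wrong way.

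You cannot charge the oscillation of a bad cube or bad link to the energy. The pairs at distance $\sim r_Q$ in a bad cube $Q$ contribute only $\lesssim\lambda^p\mu(Q)\phi(Q)^{p\gamma}$ to $\mathcal E^\phi_{\lambda,\gamma,p}$, however large $|f(x)-f(y)|$ is. A large jump is therefore seen by the energy only through bad pairs at finer and finer scales.

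The workable logic, which the paper uses twice, is the reverse of your first description. Refine bad pieces until they are good, charge only the \emph{weights} of bad pieces to the energy, and dominate the weights of the final good pieces by those of their bad ancestors. Scales must be mixed, since a chain at a single fine scale $s$ costs $\lambda r\phi(Q_s)^\gamma\gg\lambda r\phi(B)^\gamma$. The mixed cost is controlled through H\"older's inequality, $(\sum_i|I_i|\theta_{I_i})^\sigma\le|Q|^{\sigma-1}\sum_i|I_i|\theta_{I_i}^\sigma$.

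This scheme needs the weights not to decay under refinement. Otherwise finite energy does not force the refinement to stop, and a jump can hide at a point refined infinitely often; this is exactly what happens for $\mathbf 1_{(0,1]}$ on $[-1,1]$ when $\gamma\in(-\frac1p,0)$. That monotonicity, not the convergence of a series over scales, produces the ranges:
\begin{itemize}
\item Along curves, with $\sigma=p/q$, the weight is $K_I=|I|\theta_I^\sigma$. Monotonicity means $1+\gamma\sigma\le0$, resp.\ $1+d\gamma\sigma\le0$ (Lemma~\ref{lem:dyadic-endpoint}(i), Proposition~\ref{prop:volume-one-dimensional}), that is, $\gamma\le-q/p$, resp.\ $\gamma\le-q/(dp)$.
\item For dyadic cubes the weight is $\mu(P)\phi(P)^{p\gamma}$, and monotonicity means $D+p\gamma\le0$, resp.\ $1+p\gamma\le0$.
\end{itemize}
The range in the theorem is the union of these two regimes, obtained by two different arguments. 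Your claim that one geometric series is summable ``exactly'' when $\gamma p\le-\min\{q,D\}$ comes with no computation. The factors you name, $(s/r)^q$ and $(s/r)^{\gamma p}$, give no threshold in the right direction over $s=2^{-k}r$: their product is summable only for $\gamma>-q/p$, and their ratio for every $\gamma<0$. The $D$-regime has nothing to do with curve modulus.

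Several ingredients are also missing.
\begin{itemize}
\item \emph{Curve regime.} You need one curve per pair adapted to the energy, not a curve family tested against bad balls at all scales. The paper introduces the pointwise density $G^\phi$ of \eqref{eq:G} and proves the coarse Haj\l asz implication of Lemma~\ref{lem:spatial-coarse}. It then applies Lemma~\ref{lem:riesz-curve} to $F=(G^\phi)^{p/q}$, so all multiscale work becomes one-dimensional. There, averaging over dyadic grids and the radial bound of Lemma~\ref{lem:volume-radial} handle the endpoint without logarithmic loss.
\item \emph{Dyadic regime, structure.} Median comparisons must be organised on a balanced partition with a discrete Poincar\'e inequality (Lemma~\ref{lem:volume-balanced-cover}). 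The extra cubes created by balancing must be controlled (Lemma~\ref{lem:volume-weighted-linear-closure}).
\item \emph{Dyadic regime, exceptional sets.} Inside good cubes there are portions where $f$ leaves its quantile band and may be arbitrarily large. These are absorbed only after splitting $f$ into bands of height $2^k$ and using Lemmas~\ref{lem:volume-two-region} and~\ref{lem:volume-tail}. Your truncation at $\pm M$ does not address this, since the constant must be independent of $M$.
\end{itemize}
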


\begin{remark}
\begin{enumerate}[(i)]
\item
On $(\mathbb R^N,|\cdot-\cdot|,dx)$, we may take $q=1$
and $d=D=N$.
Thus, Theorem~\ref{thm:NPI} gives the ranges $\gamma\leq-1/p$
for $\phi=\rho$ and $\gamma\leq-1/(Np)$ for $\phi=V$, with energy
on $\Lambda B$. These agree with the ranges in
\cite[Corollary~1.3(ii)]{DYYZTruncated26}; the direct application
here retains the dilation $\Lambda$.
Writing $\omega_N:=|B(0,1)|$, the exact normalizations on $\mathbb R^N$ are
$\mathcal E^\rho_{\lambda,\gamma,p}=\omega_N^{-1}\Phi^\gamma_{\lambda,p}$ and
$\mathcal E^V_{\lambda,\gamma,p}
 =\omega_N^{-1}\Phi^{N\gamma}_{\lambda\omega_N^\gamma,p}.$
For the parameter used in \cite{DYYZTruncated26,Picenni2026}, the
corresponding substitutions are $p\gamma$ and $Np\gamma$,
respectively. We do not assert that $\Lambda=1$ in general.
In particular, when $\phi=\rho$, the case $\gamma=-1$ corresponds to
Nguyen's Poincar\'e inequality
\cite[Theorem~1(a)]{Nguyen2011}.

\item
The negative ranges of $\gamma$ in Theorem~\ref{thm:NPI} are sharp
for any fixed $p\in[1,\infty)$, $q\in[1,p]$,
$D\in[1,\infty)$, and $d\in(0,D]$.
For any negative $\gamma$ outside
the corresponding range, \eqref{eq:NPI} fails on some compact
$q$-Poincar\'e space with lower dimension $d$ and upper dimension
$D$; see Corollary~\ref{cor:NPI-sharpness}.

\end{enumerate}
\end{remark}

As an application, we obtain the following
liminf lower bounds.
For $\gamma\in\mathbb R\setminus\{0\}$, we write
$\lambda\to L_\gamma$ for $\lambda\to0^+$ when
$\gamma\in(-\infty,0)$ and for $\lambda\to\infty$ when
$\gamma\in(0,\infty)$.

\begin{theorem}\label{thm:asymptotic}
Let $p\in[1,\infty)$, $(X,\rho,\mu)$
be a complete doubling $q$-Poincar\'e space for some $q\in[1,p]$
with a lower dimension $d$ and an upper dimension $D$,
and let $\phi\in\{\rho,V\}$. Assume either $\phi=\rho$ and
$\gamma\in(-\infty,-\frac{\min\{q,D\}}{p}]\cup(0,\infty)$ or $\phi=V$ and
$\gamma\in(-\infty,-\frac{q}{p\max\{q,d\}}]\cup(0,\infty)$.
Let $f\in L^1_{\mathrm{loc}}(\mu)$ satisfy
\[
 \liminf_{\lambda\to L_\gamma}
 \mathcal E^\phi_{\lambda,\gamma,p}(f;X)<\infty.
\]
Then $f\in \dot{M}^{1,p}(\mu)$ when $p\in(1,\infty)$
and $f\in \dot{\rm BV}(\mu)$ when $p=1$. In either case,
\begin{equation}\label{eq:asymptotic-lower}
 \mathcal E_p(f;X)
 \le C
 \liminf_{\lambda\to L_\gamma}
 \mathcal E^\phi_{\lambda,\gamma,p}(f;X),
\end{equation}
where $C$ is a positive constant depending only on $p$, $\gamma$,
and the geometric constants of $X$.
\end{theorem}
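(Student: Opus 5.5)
We derive Theorem~\ref{thm:asymptotic} from a local truncated estimate at small scales rather than from \eqref{eq:NPI} directly; this is the only route that also covers $\gamma>0$. The target is a Haj\l asz gradient for $p>1$, and a bound on $|Df|(X)$ through discrete convolutions for $p=1$. Fix a sequence $\lambda_j\to L_\gamma$ with $\mathcal E^\phi_{\lambda_j,\gamma,p}(f;X)\to M:=\liminf_{\lambda\to L_\gamma}\mathcal E^\phi_{\lambda,\gamma,p}(f;X)<\infty$.

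The first step is to attach to each $\lambda$ a natural scale $r_\lambda$. For $\phi=\rho$ this is the solution of $\lambda r^{1+\gamma}\approx$ a fixed multiple of the oscillation size, i.e.\ the balance point between $\lambda^p r^p\phi(B)^{\gamma p}$ and the energy term. For $\gamma<0$ we have $r_\lambda\to0$ as $\lambda\to0^+$. For $\gamma>0$, $\lambda\to\infty$ likewise forces the relevant pairs to have $\rho(x,y)\to0$. On a ball $B$ of radius $r$, \eqref{eq:NPI} for negative $\gamma$ (and the truncated estimate of Lemma~\ref{lem:curve-local-estimates}(ii) for positive $\gamma$, which I assume supplies the analogue with a controlled remainder) gives
\[
\fint_B\fint_B|f(x)-f(y)|^p\,d\mu\,d\mu\lesssim \lambda^p r^p\phi(B)^{\gamma p}+\frac{r^p}{\mu(B)}\,\mathcal E^\phi_{\lambda,\gamma,p}(f;\Lambda B).
\]
Fix $r>0$. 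For $\gamma<0$, $\lambda^p r^p\phi(B)^{\gamma p}\to0$ as $\lambda\to0$, so in the limit
\[
r^{-p}\fint_B\fint_B|f-f|^p\lesssim \liminf_j \mu(B)^{-1}\mathcal E^\phi_{\lambda_j,\gamma,p}(f;\Lambda B).
\]
This is a localized, uniform-in-$r$ oscillation bound.

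Next, cover $X$ by a bounded-overlap family $\{B_i^r\}$ of radius-$r$ balls (with $\Lambda B_i^r$ also of bounded overlap by doubling) and form the discrete convolution $f_r=\sum_i f_{B_i^r}\psi_i$ with a Lipschitz partition of unity. The standard estimate $\operatorname{lip} f_r\lesssim r^{-1}\sum_{i\sim k}|f_{B_i}-f_{B_k}|$ on $B_k$, combined with the local bound and Hölder, gives
\[
\int_X(\operatorname{lip} f_r)^p\,d\mu\lesssim\sum_k \mathcal E^\phi_{\lambda_j,\gamma,p}(f;\Lambda' B_k^r)+o_j(1).
\]
Summing with bounded overlap and using superadditivity of $E\mapsto\mathcal E(f;E)$ over disjoint pieces (the overlap constant absorbs multiplicity), this is at most $C\,\mathcal E^\phi_{\lambda_j,\gamma,p}(f;X)+o(1)$. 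Letting $j\to\infty$ gives $\int(\operatorname{lip} f_r)^p\le CM$ uniformly in $r$. Since $f_r\to f$ in $L^1_{\mathrm{loc}}$ (Lebesgue differentiation), for $p=1$ the definition \eqref{eq:variation-definition} yields $|Df|(X)\le CM$. For $p>1$, I would instead use the maximal-function route: the local bounds give, for a.e.\ $x,y$,
\[
|f(x)-f(y)|\lesssim\rho(x,y)\bigl[g(x)+g(y)\bigr],\qquad g:=\sup_{r}\Bigl(r^{-p}\fint\fint_{B(\cdot,r)}|f-f|^p\Bigr)^{1/p},
\]
via a telescoping chain of balls. Bounding $g$ in $L^p$ needs a maximal-type control of the limit measures $\nu_j:=$ the marginal densities of the energy. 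Passing to a weak-$*$ limit $\nu$ of $\nu_j$ (total mass $\le M$), one has $g^p\lesssim M_{\Lambda}\nu$ in the sense of the uncentered maximal function. Since $M$ is not bounded $L^1\to L^1$, I would instead obtain weak compactness of $\nabla f_r$ in $L^p$ ($p>1$): Lipschitz $f_r$ with uniformly bounded $\|\operatorname{lip} f_r\|_p$ converge to $f$ with $f\in N^{1,p}$ and minimal upper gradient norm $\le (CM)^{1/p}$. On Poincar\'e spaces with $q<p$, $N^{1,p}=M^{1,p}$ with comparable norms (Keith--Zhong/Haj\l asz). If $q=p$, I would need to check that the cited equivalence is still available, since Keith--Zhong provides self-improvement to some $q'<p$.

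I expect the main obstacle to be the case $\gamma>0$. There the term $\lambda^p r^p\phi(B)^{\gamma p}$ blows up as $\lambda\to\infty$, so \eqref{eq:NPI} is unusable, and one must rely on the precise form of Lemma~\ref{lem:curve-local-estimates}(ii). I would try to use it to bound $|f_{B}-f_{B'}|$ for adjacent small balls by curve-averaged truncated differences, choosing $r$ depending on $\lambda$ so that the pairs counted in the energy, with $|f(x)-f(y)|>\lambda\rho\phi^\gamma$, are exactly those at scale $\lesssim r$. Summation over curves would then use the $q$-Poincar\'e curve-family characterization mentioned in the abstract.
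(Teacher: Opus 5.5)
\textbf{The case $\gamma\in(0,\infty)$ is not proved.} Your handling of $\gamma<0$ follows the paper: fix a ball, let $\lambda_j\to0^+$ in \eqref{eq:NPI}, then use discrete convolutions and a closure argument. For $\gamma>0$, however, you only \emph{assume} that Lemma~\ref{lem:curve-local-estimates}(ii) supplies an analogue ``with a controlled remainder''. You then sketch an alternative that is never carried out: a $\lambda$-dependent scale $r_\lambda$, and curve-averaged bounds for $|f_B-f_{B'}|$ on adjacent balls. That sketch also points the wrong way, since the discrete convolution needs $r$ fixed while $\lambda\to\infty$.

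The missing observation is elementary. Since $\mathcal E^\phi_{1,\gamma,p}(f/\lambda;E)=\lambda^{-p}\mathcal E^\phi_{\lambda,\gamma,p}(f;E)$, apply Lemma~\ref{lem:curve-local-estimates}(ii) to $f/\lambda_j$ on a \emph{fixed} ball $B$ and multiply by $\lambda_j^p$. This gives
\[
\fint_B\fint_B\min\left\{|f(x)-f(y)|,c\lambda_jr_B\phi(B)^\gamma\right\}^p\,d\mu(x)\,d\mu(y)\lesssim\frac{r_B^p}{\mu(B)}\,\mathcal E^\phi_{\lambda_j,\gamma,p}(f;\Lambda B).
\]
The quantity $\lambda r_B\phi(B)^\gamma$ that ruins \eqref{eq:NPI} for $\gamma>0$ appears here only as a truncation level, and it tends to $\infty$ as $\lambda_j\to\infty$. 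Fatou's lemma then yields exactly the local estimate \eqref{eq:local-p} obtained in the negative case. From there the argument is identical for both signs of $\gamma$, and no $\lambda$-adapted scale or additional curve argument is needed.

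\textbf{Two smaller points.}

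First, in your summed bound $\int_X(\operatorname{lip}f_r)^p\,d\mu\lesssim\sum_k\mathcal E^\phi_{\lambda_j,\gamma,p}(f;\Lambda'B_k^r)+o_j(1)$, the remainder is $\lambda_j^p\sum_k\mu(B_k)\phi(B_k)^{\gamma p}$. For fixed $r$ this is infinite when $\mu(X)=\infty$, for example with $\phi=\rho$ on $\mathbb R^N$. You should take the liminf ball by ball first and then apply Fatou's lemma to the series over $k$, as the paper does.

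Second, for $p>1$ your worry about $q=p$ is unfounded. The Keith--Zhong self-improvement you cite is exactly what provides $a\in(1,p)$ on a complete doubling $p$-Poincar\'e space. The paper uses it to repair the very maximal-function route you abandoned (Lemma~\ref{lem:discrete-sobolev-closure}):
\begin{enumerate}[(a)]
\item Instead of an $L^1$ limit measure of the energy densities, take a weak limit $h$ of $(\operatorname{lip}f_r)^a$ in $L^{p/a}(\mu)$.
\item Pass the $(1,a)$-Poincar\'e inequality, which holds with $\operatorname{lip}$, to the limit.
\item Conclude that $C(Mh)^{1/a}$ is a Haj\l asz gradient, since $p/a>1$ makes $M$ bounded.
\end{enumerate}
Your alternative, closure in $N^{1,p}$ followed by $N^{1,p}=M^{1,p}$, is equivalent in substance. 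It additionally requires that $\operatorname{lip}f_r$ dominate a $p$-weak upper gradient of $f_r$, and some care because $f_r\to f$ only in $L^1_{\mathrm{loc}}(\mu)$.
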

\begin{remark}
\begin{enumerate}[(i)]
\item
On $(\mathbb R^N,|\cdot-\cdot|,dx)$, we may take $q=1$
and $d=D=N$.
In this case, Theorem~\ref{thm:asymptotic} reduces to the lower bound in
\cite[Corollary~4.1]{DYYZTruncated26}
(see also \cite{Picenni2026,LinYangYangYuanZhang,LinEtAlFullRange2026}),
with $\gamma$ therein replaced
by $p\gamma$ when $\phi=\rho$ and by $Np\gamma$ when $\phi=V$.

\item
The negative ranges of $\gamma$ in Theorem~\ref{thm:asymptotic} are sharp
for any fixed $p\in[1,\infty)$, $q\in[1,p]$, $D\in[1,\infty)$,
and $d\in(0,D]$.
For any negative $\gamma$ outside
the corresponding range, \eqref{eq:asymptotic-lower} fails on some compact
$q$-Poincar\'e space with lower dimension $d$ and upper dimension
$D$; see Proposition~\ref{prop:counterexample}.
In Euclidean spaces, the sharp negative ranges are recorded in
\cite[p.~6]{Picenni2026}.

\item
Theorem~\ref{thm:asymptotic} is a one-sided regularity criterion;
in particular, it does not assert a converse for BV functions.
Indeed, on $X=[-1,1]$ with Lebesgue measure, take
$p=1$, $\phi=\rho$, $\gamma=-1$, and $f=\mathbf 1_{(0,1]}$.
Then $|Df|(X)=1$, whereas, for $\lambda\in(0,1)$,
\[
 \mathcal E^\rho_{\lambda,-1,1}(f;X)
 \geq\frac\lambda2\int_0^{\frac14}\int_0^{\frac14}
 \frac{du\,dv}{(u+v)^2}=\infty.
\]
For $p=1$, $\phi=\rho$, and $\gamma=-1$,
\eqref{eq:asymptotic-lower} gives precisely the BV lower bound
stated in \cite[Open Problem~1.8]{DiMarinoSquassina19}.

\end{enumerate}
\end{remark}

The Euclidean proofs of Theorems~\ref{thm:NPI} and
\ref{thm:asymptotic} for $\phi=\rho$ and $\gamma=-1$
(see \cite{BourgainNguyen2006,Nguyen2011}) and of
Theorem~\ref{thm:asymptotic} for $\gamma\in(0,\infty)$
(see \cite{gp25}) use ingenious one-dimensional interval selection
and then pass to higher dimensions
by slicing or rotational averaging. However,
these arguments strongly rely on one-dimensional geometry
and the symmetries of Euclidean spaces and hence they cannot be extended
directly to general metric measure spaces.
Moreover, the interaction between the Poincar\'e exponent and
volume growth leads to sharp ranges of $\gamma$ different from
those in Euclidean spaces.
To overcome these difficulties, we use two proof strategies adapted
to the different endpoints.
The ranges furnished by the two methods are
\[
\begin{array}{c|cc}
 &\phi=\rho&\phi=V\\ \hline
 \text{curve method}&\gamma\leq-q/p&\gamma\leq-q/(dp)\\
 \text{adaptive dyadic method}&\gamma\leq-D/p&\gamma\leq-1/p
\end{array}
\]
Their unions give the ranges in Theorem~\ref{thm:NPI}. In particular,
on an Ahlfors $N$-regular space these become
$\gamma\leq-\min\{q,N\}/p$ and
$\gamma\leq-\min\{q,N\}/(Np)$, consistently with the change
of parameter between the two functionals.

For $\gamma\le-\frac qp$ when $\phi=\rho$ and
$\gamma\le-\frac q{dp}$ when $\phi=V$, we use the quantitative curve
characterization of Poincar\'e spaces to replace integration over
straight lines. Pointwise bounds for the nonlocal energy densities
allow us to use one-dimensional truncated difference estimates
along the resulting joining curves to obtain the desired inequalities.

For $\gamma\le-\frac Dp$ when $\phi=\rho$ and
$\gamma\le-\frac1p$ when $\phi=V$, we first construct dyadic
refinements which keep neighboring cubes at comparable scales and
bound the weighted dyadic sum over the final partition by the sums over
the initial and marked cubes
(Lemma~\ref{lem:volume-weighted-linear-closure}).
We then use median differences to select marked cubes that yield
sufficiently many pairs with separated function values, so that the
nonlocal energy bounds the weighted dyadic sum over the marked cubes.
This argument using low and high level sets to bound the nonlocal
energy from below borrows some ideas from the proofs of
\cite[Lemma~2]{BourgainNguyen2006} and
\cite[Lemma~5]{Nguyen2011}.
We then combine the discrete Poincar\'e estimate for the cube medians
with the nonlocal energy bound to obtain the desired inequality.
The adaptive construction may be of independent interest.

The remainder of this article is organized as follows.
In Section~\ref{sec:one-dimensional}, we recall a curve estimate,
prove one-dimensional truncated difference estimates, and establish
pointwise bounds for the nonlocal energy densities.
In Section~\ref{sec:metric-proofs}, we use the curve method to prove
Poincar\'e inequalities for negative parameters and obtain asymptotic lower bounds by discrete convolutions.
Section~\ref{sec:volume-proofs} develops the adaptive dyadic argument
and completes the proofs of Theorems~\ref{thm:NPI} and
\ref{thm:asymptotic}.
The sharpness of the negative parameter ranges is shown in
Section~\ref{sec:optimality}.

We end this introduction by making some notational conventions.
Let $\mathbb{N}:=\{1,2,\ldots\}$.
We also write $\mathbb Z_+:=\{0,1,2,\ldots\}$.
Given a set $E\subset X$, we denote its characteristic
function by $\mathbf{1}_E$.
The cardinality of a set $E$ is denoted by $\sharp E$.
For a ball $B=B(x,r)$ and $\tau\in(0,\infty)$, let
$\tau B:=B(x,\tau r)$.
The notation $f\lesssim g$ means that $f\leq Cg$ for some
constant $C\in(0,\infty)$. If both $f\lesssim g$ and $g\lesssim f$
hold, we write $f\sim g$.
We write $f\gtrsim g$ if $g\lesssim f$.
% Referee revision: use the conventional notation for one-sided limits.

\section{Curve and Truncated Difference Estimates}\label{sec:one-dimensional}

In Subsection~\ref{sec:poincare-tools}, we recall a curve estimate
that controls line integrals by spatial integrals.
We then prove complementary truncated difference estimates on intervals
in Subsection~\ref{subsec:one-dimensional-endpoints} and establish
the required pointwise bounds for the nonlocal energy densities
in Subsection~\ref{subsec:pointwise-estimates}.

\subsection{Curve Estimates on Poincar\'e Spaces}
\label{sec:poincare-tools}

In this subsection, we recall a curve estimate on Poincar\'e spaces.

Let $(X,\rho,\mu)$ be a doubling metric measure space.
For $x,y\in X$, let
\begin{equation*}
 K(x,y):=
 \begin{cases}
  \displaystyle\frac{\rho(x,y)}{V(x,y)}&\text{if }x\neq y,\\
  0&\text{if }x=y.
 \end{cases}
\end{equation*}
The doubling condition gives $K(x,y)\sim K(y,x)$. Integration over
annuli centered at $y$ then yields, for any $\tau\in[1,\infty)$
and any ball $B$,
\begin{equation}\label{eq:kernel-average}
 \sup_{y\in\tau B}\int_B K(x,y)\,d\mu(x)\lesssim r_B,
\end{equation}
where the implicit positive constant depends only on $\tau$ and
the doubling constant.

The following curve estimate follows from the quantitative equivalences in
\cite[Proposition~2.4 and Theorem~A.3]{CaputoCavallucci25}.

\begin{lemma}\label{lem:riesz-curve}
Let $q\in[1,\infty)$, and let $(X,\rho,\mu)$ be a complete doubling
$q$-Poincar\'e space. Then there exists $L\in[1,\infty)$ such that,
for any distinct $x,y\in X$ and any nonnegative Borel function
$F:X\to[0,\infty]$, there exists an arclength-parametrized curve
$\eta:[0,\ell_\eta]\to X$ joining $x$ to $y$ such that $\ell_\eta\leq L\rho(x,y)$
and
\begin{equation}\label{eq:riesz-curve}
 \left(\int_\eta F\,d s\right)^q
 \lesssim\rho(x,y)^{q-1}
 \int_{B(x,L\rho(x,y))}F(z)^q
 \left[K(x,z)+K(y,z)\right]\,d\mu(z),
\end{equation}
where $L$ and the implicit positive constant depend only on $q$
and the geometric constants of $X$.
\end{lemma}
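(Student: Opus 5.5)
The plan is to deduce Lemma~\ref{lem:riesz-curve} from the known equivalence between the weak $(1,q)$-Poincar\'e inequality and a quantitative curve-family or modulus condition, and then to select a single good curve from that family by an averaging argument.

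\emph{Step 1 (pencils of curves).} By \cite[Proposition~2.4 and Theorem~A.3]{CaputoCavallucci25}, in a complete doubling $q$-Poincar\'e space every pair of distinct points $x,y$ admits a probability measure $\sigma_{x,y}$ on the family of arclength-parametrized curves joining $x$ to $y$. Each curve in the family has length at most $L\rho(x,y)$ and lies in $B(x,L\rho(x,y))$. The averaged line-integral functional is dominated by a Riesz-type kernel: for every nonnegative Borel $G$,
\begin{equation*}
\int\!\!\int_\eta G\,ds\,d\sigma_{x,y}(\eta)\lesssim\int_{B(x,L\rho(x,y))}G(z)\left[K(x,z)+K(y,z)\right]\,d\mu(z).
\end{equation*}
For $q>1$ the cited results may only give this structure in an $L^q$-modulus form. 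That form is what we actually need, so I would invoke it directly in that shape.

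\emph{Step 2 (H\"older and selection).} Apply the displayed bound with $G=F^q$. Use H\"older's inequality along each curve, which gives
\begin{equation*}
\left(\int_\eta F\,ds\right)^q\le \ell_\eta^{\,q-1}\int_\eta F^q\,ds\le (L\rho(x,y))^{q-1}\int_\eta F^q\,ds.
\end{equation*}
Integrating against $\sigma_{x,y}$ bounds the average of $(\int_\eta F\,ds)^q$ by the right-hand side of \eqref{eq:riesz-curve}. Since $\sigma_{x,y}$ is a probability measure, some curve $\eta$ satisfies the bound pointwise, with constant at most twice the average. If the right-hand side is infinite, any curve in the family works.

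\emph{Main obstacle.} The hard part is reading off, from the cited equivalences, the precise statement that the pencil exists with the kernel $K(x,z)+K(y,z)$ and with the length and support controls. The constants must depend only on $q$ and the geometric constants. The doubling property together with completeness (which gives quasiconvexity and properness) supplies the support bound.

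If \cite{CaputoCavallucci25} instead gives the $q$-modulus lower bound $\mathrm{Mod}_q(\Gamma_{x,y})\gtrsim \rho(x,y)^{1-q}\big/\!\left(\int (K(x,\cdot)+K(y,\cdot))^{\ldots}\right)$, I would argue by contradiction. Suppose every curve violates \eqref{eq:riesz-curve} with a large constant. Then the function $F$ divided by the right-hand side, suitably normalized, becomes admissible for the curve family $\Gamma_{x,y}$, and this contradicts the modulus lower bound once the doubling estimate \eqref{eq:kernel-average} is used to compare the kernels.
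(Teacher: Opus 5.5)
\paragraph{The gap: Step~1 is unavailable for $q>1$.} Your Steps~1--2 fail for $q>1$, and that is the case the lemma is needed for. Suppose a single probability measure $\sigma_{x,y}$ on curves joining $x$ to $y$ satisfied $\int\!\int_\eta G\,ds\,d\sigma_{x,y}(\eta)\lesssim\int_{B(x,L\rho(x,y))}G\,[K(x,\cdot)+K(y,\cdot)]\,d\mu$ for \emph{every} Borel $G\geq0$. Then the weak $(1,1)$-Poincar\'e inequality would follow. Indeed, if $g$ is an upper gradient of $u$, then $|u(x)-u(y)|\leq\int_\eta g\,ds$ for each such curve. Averaging in $\sigma_{x,y}$, and then over $x,y\in B$ using \eqref{eq:riesz-average}, gives $\fint_B\fint_B|u(x)-u(y)|\,d\mu(x)\,d\mu(y)\lesssim r_B\fint_{(2L+1)B}g\,d\mu$. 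Hence such pencils do not exist in $q$-Poincar\'e spaces that fail the $1$-Poincar\'e inequality, for example the glued spaces of Section~\ref{sec:optimality} with $N-q<s<N-1$. H\"older along curves cannot repair this, because a fixed pencil plus H\"older never distinguishes a $q$-Poincar\'e space from a $1$-Poincar\'e one. So Steps~1--2 prove the lemma only in spaces that already support a $1$-Poincar\'e inequality. The essential point of the lemma is that the curve is allowed to depend on $F$.

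\paragraph{What the paper does, and how to fix your fallback.} The paper takes exactly the ``invoke it directly'' option you mention. \cite[Theorem~A.3(ii)]{CaputoCavallucci25} already gives, for each nonnegative Borel $F$, a curve of length at most $L\rho(x,y)$ satisfying the estimate, with the right-hand integral taken over $B(x,2L\rho(x,y))\cup B(y,2L\rho(x,y))$. The proof only notes that this union lies in $B(x,(2L+1)\rho(x,y))$ and enlarges $L$; there is no averaging over curves and no H\"older step.

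Your modulus fallback is the right mechanism if you start instead from Keith's characterization \cite{Keith03}, but it must use the \emph{weighted} bound
\[
\mathrm{Mod}_q(\Gamma_{x,y},\nu_{x,y})\gtrsim\rho(x,y)^{1-q}.
\]
Here $\Gamma_{x,y}$ is the family of curves joining $x$ to $y$ of length at most $L\rho(x,y)$, and $\nu_{x,y}:=[K(x,\cdot)+K(y,\cdot)]\mathbf 1_{B(x,2L\rho(x,y))}\,\mu$. The kernel is the density of the reference measure, not a denominator, and \eqref{eq:kernel-average} plays no role. Set $\tau:=\inf_{\eta\in\Gamma_{x,y}}\int_\eta F\,ds$.
\begin{itemize}
\item If $\tau\in(0,\infty)$, then $F/\tau$ is admissible, so $\tau^q\lesssim\rho(x,y)^{q-1}\int F^q\,d\nu_{x,y}$, and any $\eta$ with $\int_\eta F\,ds\leq2\tau$ works.
\item If the right-hand side vanishes, then $F=0$ $\mu$-a.e.\ on the ball. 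The curves of $\Gamma_{x,y}$ spending positive length in $\{F>0\}$ then form a family of zero modulus, and subadditivity of modulus produces a curve with $\int_\eta F\,ds=0$.
\item The remaining degenerate cases are trivial.
\end{itemize}
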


Indeed, Theorem~A.3(ii) of \cite{CaputoCavallucci25} asserts
the existence of such a curve for every nonnegative Borel integrand.
Since $B(x,2L\rho(x,y))\cup B(y,2L\rho(x,y))
\subset B(x,(2L+1)\rho(x,y))$, it follows that
\eqref{eq:riesz-curve} holds after increasing $L$.

We will average the right-hand side of \eqref{eq:riesz-curve} over
$x$ and $y$. Note that, by \eqref{eq:kernel-average} and Tonelli's theorem,
we obtain, for any nonnegative Borel function $F$ and $\tau\in[1,\infty)$,
\begin{equation}\label{eq:riesz-average}
 \fint_B\fint_B\int_{\tau B}F(z)
 \left[K(x,z)+K(y,z)\right]\,d\mu(z)\,d\mu(x)\,d\mu(y)
 \lesssim\frac{r_B}{\mu(B)}\int_{\tau B}F\,d\mu,
\end{equation}
where the implicit positive constant depends only on $\tau$ and
the doubling constant.

\subsection{One-Dimensional Truncated Difference Estimates}
\label{subsec:one-dimensional-endpoints}

In this subsection, we prove complementary one-dimensional truncated
difference estimates for the metric and volume functionals.
We will use these estimates along curves in
Section~\ref{sec:metric-proofs}.

\begin{proposition}\label{prop:volume-one-dimensional}
Let $(X,\rho,\mu)$ be a doubling and reverse doubling metric measure
space.
Let $C_\ast,\sigma\in[1,\infty)$, $A\in(0,C_\ast\operatorname{diam} X)$, and
$\gamma\in\mathbb R\setminus\{0\}$.
Let $\eta:[0,A]\to X$ be
$1$-Lipschitz, and let $v:[0,A]\to\mathbb R$ and
$h:[0,A]\to[0,\infty]$ be measurable. Assume that, for any
distinct $u,w\in[0,A]$ and any $R\in[|u-w|,4A]$,
\begin{align}
 h(u)+h(w)\leq V(\eta(u),R)^\gamma\
 \Longrightarrow\
 |v(u)-v(w)|\leq R V(\eta(u),R)^\gamma.
 \label{eq:volume-one-dimensional-coarse}
\end{align}
Then there exists a constant $c\in(0,\infty)$ such that the following
statements hold.
\begin{enumerate}[{\rm (i)}]
\item If $\gamma\in(-\infty,-\frac{1}{d\sigma}]$, where $d$
is a lower dimension of $X$, then
\begin{align}\label{eq:v-p-conclusion}
\left(|v(0)-v(A)|-cAV(\eta(0),A)^\gamma\right)_+
\lesssim A
 \left[h(0)^\sigma+h(A)^\sigma+
 \fint_0^Ah(u)^\sigma\,d u\right]^{\frac{1}{\sigma}}.
\end{align}
\item If $\gamma\in(0,\infty)$, then
\begin{align}\label{eq:volume-positive-conclusion}
\min\left\{|v(0)-v(A)|,cAV(\eta(0),A)^\gamma\right\}
\lesssim A
 \left[h(0)^\sigma+h(A)^\sigma+
 \fint_0^Ah(u)^\sigma\,d u\right]^{\frac{1}{\sigma}}.
\end{align}
\end{enumerate}
The constant $c$ and the implicit positive constants depend only on
$\gamma$, $\sigma$, $C_\ast$, and the geometric constants of $X$.
\end{proposition}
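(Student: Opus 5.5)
We argue by a one-dimensional interval-selection (stopping-time) argument on $[0,A]$, using the hypothesis \eqref{eq:volume-one-dimensional-coarse} as a ``good pair'' criterion. Write $M:=h(0)^\sigma+h(A)^\sigma+\fint_0^Ah^\sigma$ and, for a dyadic scale $R_k:=2^{-k}A$ ($k\in\mathbb Z_+$), set $W_k:=V(\eta(0),R_k)$ as the model volume at scale $R_k$. Since $\eta$ is $1$-Lipschitz and $\mu$ is doubling, $V(\eta(u),R)\sim V(\eta(0),R)$ whenever $|u|\lesssim R$. More generally, along the curve the volumes at comparable points and scales are comparable. By reverse doubling \eqref{eq:volume-reverse-growth}, $W_k\lesssim 2^{-kd}W_0$; the requirement $R\le 4A<4C_\ast\operatorname{diam}X$ is what lets us apply it up to a bounded number of steps.

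For (i), with $\gamma<0$, the threshold $V^\gamma$ grows as the scale shrinks. The plan is a chain argument. Choose points $0=u_0<u_1<\dots<u_N=A$ (dyadic subdivision refined adaptively), and call a consecutive pair good if $h(u_i)+h(u_{i+1})\le V(\eta(u_i),R)^\gamma$ with $R\sim|u_i-u_{i+1}|$. On good pairs the hypothesis gives $|v(u_i)-v(u_{i+1})|\le R\,V(\eta(u_i),R)^\gamma$. We would then proceed as follows.
\begin{enumerate}[(1)]
\item At the top scale, if $h(0)+h(A)\le V(\eta(0),A)^\gamma$, we are done with $c=1$.
\item Otherwise, define $k_0$ as the smallest $k$ with $W_k^{\gamma}\ge C M^{1/\sigma}$. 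Because $W_k^\gamma\gtrsim 2^{-kd\gamma}W_0^\gamma$ increases geometrically, we get $W_{k_0}^\gamma\sim M^{1/\sigma}$ up to the geometric factor, or $k_0=0$.
\item By Chebyshev, the set of $u$ with $h(u)>\tfrac12 W_{k_0}^\gamma$ has measure at most $A\,M\,W_{k_0}^{-\gamma\sigma}$. Hence in each interval of length $R_{k_0}$ we can pick a point with $h$ small, provided $A M W_{k_0}^{-\gamma\sigma}\ll R_{k_0}=2^{-k_0}A$, i.e. $2^{k_0}M\ll W_{k_0}^{\gamma\sigma}$.
\item Chain from $0$ to $A$ through these selected points in $2^{k_0}$ steps, with the endpoints $0$ and $A$ handled by the terms $h(0),h(A)$ in $M$. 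Summing gives $|v(0)-v(A)|\lesssim 2^{k_0}R_{k_0}W_{k_0}^\gamma=AW_{k_0}^\gamma$.
\end{enumerate}

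The crux is the consistency between step (3) and the summation in step (4). We need $A\,W_{k_0}^\gamma\lesssim A M^{1/\sigma}$ while also satisfying $2^{k_0}\lesssim W_{k_0}^{\gamma\sigma}/M$. Using $W_{k}^{\gamma\sigma}\gtrsim 2^{-kd\gamma\sigma}W_0^{\gamma\sigma}$ with $-d\gamma\sigma\ge1$, the factor $2^{k}$ is absorbed by the growth of $W_k^{\gamma\sigma}$; this is exactly where $\gamma\le-\frac1{d\sigma}$ enters. I expect the endpoint case $-d\gamma\sigma=1$ to be the main obstacle. There a single-scale choice loses nothing only if $k_0$ is chosen as the first scale where $W_k^{\gamma\sigma}\ge C2^kM$, rather than via $M^{1/\sigma}$. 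At that scale the previous scale fails, so $W_{k_0}^{\gamma}\lesssim (2^{k_0}M)^{1/\sigma}$, and we would need to recombine this to get $AW_{k_0}^\gamma\lesssim A M^{1/\sigma}$. If a single scale does not suffice, I would instead use a multiscale chain (dyadic telescoping with points chosen at every scale $k\le k_0$), summing $\sum_k R_kW_k^\gamma$ as a geometric series dominated by its last term. When $k_0=0$ fails, meaning the top threshold is already below $M^{1/\sigma}$, the leftover term $cAV(\eta(0),A)^\gamma$ absorbs the top-scale contribution, which is why the positive part appears.

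For (ii), with $\gamma>0$, thresholds shrink with scale, so we instead use a single scale. If $M^{1/\sigma}\ge c\,V(\eta(0),A)^\gamma$, the claim is trivial since the min is at most $cAV^\gamma\le AM^{1/\sigma}$. Otherwise $h$ is small compared with $V(\eta(0),A)^\gamma$ outside a set of small measure. We choose a scale $R_k$ with $W_k^\gamma\sim$ a multiple of $(2^kM)^{1/\sigma}$, using doubling upper growth $W_k\gtrsim 2^{-kD}W_0$ to make the thresholds decrease at most polynomially. We then select good points on a grid of mesh $R_k$ by Chebyshev and chain. The sum $\sum R_kW_k^\gamma$ over $2^k$ steps equals $AW_k^\gamma$, which is $\lesssim A M^{1/\sigma}2^{k/\sigma}$. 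Here $k$ must be taken as the least admissible scale, so that $2^{k/\sigma}$ stays bounded relative to the ratio defining the choice. I expect this balancing to work without restriction on $\gamma>0$, since taking the min with $cAV^\gamma$ removes the large-difference regime.
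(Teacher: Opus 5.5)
\textbf{The single-scale chain loses an unbounded factor.} This is a genuine gap, and it already appears on $\mathbb R$ with $V(x,r)\sim r$. At scale $R_k=2^{-k}A$ your chain has about $2^k$ links, each costing $R_kW_k^\gamma$, so at best it yields $|v(0)-v(A)|\lesssim AW_k^\gamma$. To reach the target $AM^{1/\sigma}$ you need $W_k^{\gamma\sigma}\lesssim M$. Your Chebyshev selection needs $W_k^{\gamma\sigma}\gg 2^kM$. The two requirements are compatible only for $k=O(1)$, that is, only when $M\ll V(\eta(0),A)^{\gamma\sigma}$, which is the trivial case of your step (1).

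In the remaining regime, which carries all the content of (i), the best admissible $k$ gives a bound of order $A(2^kM)^{1/\sigma}$. That is an unbounded loss of $2^{k/\sigma}$. On $\mathbb R$ at the endpoint $\gamma\sigma=-1$, no $k$ is admissible at all once $M\gtrsim A^{-1}$. The growth of $W_k^{\gamma\sigma}$ helps the Chebyshev side but makes every link more expensive, so it cannot absorb $2^k$; this is not where $\gamma\le-\frac1{d\sigma}$ enters. Your fallback of dyadic telescoping through all scales $k\le k_0$ is still uniform in space and does not remove the loss.

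Part (ii) fails the same way. At the largest admissible $k$ the chain gives $A(2^kM)^{1/\sigma}\gg AM^{1/\sigma}$. The least admissible $k=0$ only returns the trivial bound $AV(\eta(0),A)^\gamma$.

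\textbf{The threshold is not local.} You use $W_k=V(\eta(0),R_k)$ along the whole curve, but the hypothesis involves $V(\eta(u),R)$. For $|u|\gg R_k$ this can differ from $W_k$ by a factor polynomial in $2^k$ (for example on $\mathbb R$ with $|x|^\alpha\,dx$ and $\eta(t)=t$). So in a general doubling space neither the choice of good links nor the cost count $\sum_iR_kV(\eta(u_i),R_k)^\gamma\sim AW_k^\gamma$ is justified.

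\textbf{What is missing: adaptive refinement with local thresholds, paid for by $h^\sigma$ through a layer-cake bound.}
\begin{itemize}
\item The paper extends $v,h,\eta$ constantly outside $[0,A]$. For $Q=[a,a+\tau]$ with $\tau\in[2A,4A)$ and $a\in[-A,0)$, and dyadic $I=[a_I,b_I]\subset Q$, it sets $W_I:=V(\eta(a_I),|I|)$ and $K_I:=|I|W_I^{\gamma\sigma}$. It calls $I$ bad if $|v(b_I)-v(a_I)|>|I|W_I^\gamma$.
\item Subdividing only bad intervals and applying H\"older on the resulting partition gives $|v(0)-v(A)|^\sigma\lesssim\tau^{\sigma-1}[K_Q+\sum_{I\text{ bad}}K_I]$ (Lemma~\ref{lem:dyadic-endpoint}(i)).
\item This is where $\gamma\le-\frac1{d\sigma}$ is used. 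By reverse doubling, $K_I\gtrsim K_Q$ for every subinterval, so a finite bad sum means finitely many bad intervals and the stopping time terminates. This is indispensable, because $W_I^\gamma\to\infty$ as $|I|\to0$ and unresolved pieces cannot be discarded.
\item Averaging over $a$ and $\tau$ turns the bad sum into $\int\int\Psi(u,r)\,\frac{dr}{r}\,du$. The contrapositive of the hypothesis bounds $\Psi(u,r)$ by terms $\mathbf 1_{\{h(u)\gtrsim V(\eta(u),r)^\gamma\}}V(\eta(u),r)^{\gamma\sigma}$. Lemma~\ref{lem:volume-radial} integrates these in $r$ to $\lesssim h(u)^\sigma$, with $h(0)$ and $h(A)$ entering through the constant extension.
\item For (ii), one first truncates $v$ at height $AV(\eta(0),A)^\gamma$ so that $Q$ is not bad. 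One then uses $W_I^\gamma\to0$ to discard the unresolved pieces (Lemma~\ref{lem:dyadic-endpoint}(ii)).
\end{itemize}
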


The left-hand sides of \eqref{eq:v-p-conclusion} and
\eqref{eq:volume-positive-conclusion} correspond to the two complementary
truncations in the identity
\[
 (s-a)_++\min\{s,a\}=s,\quad s,a\in[0,\infty),
\]
where $(s-a)_+:=\max\{s-a,0\}$, with $s=|v(0)-v(A)|$ and
$a=cAV(\eta(0),A)^\gamma$.

The proof of Proposition~\ref{prop:volume-one-dimensional} uses the
following two lemmas.
We first record a dyadic endpoint estimate. The proof of part~(ii)
of the following lemma borrows some ideas from the proof of
\cite[Proposition~2.2]{gp25}. For any interval
$Q:=[a_Q,b_Q]\subset\mathbb R$ with $a_Q<b_Q$ and for any $k\in\mathbb Z_+$,
let $\mathcal D_k(Q)$ denote the family of its $2^k$ dyadic subintervals
of length $2^{-k}|Q|$, and let
$\mathcal D(Q):=\bigcup_{k\geq0}\mathcal D_k(Q)$.

\begin{lemma}\label{lem:dyadic-endpoint}
Let $Q:=[a_Q,b_Q]\subset\mathbb R$ with $a_Q<b_Q$.
Let $\sigma\in[1,\infty)$, $v:Q\to\mathbb R$, and
$\{\theta_I\}_{I\in\mathcal D(Q)}$ be a family of positive numbers.
Assume that there exist $c,C\in(0,\infty)$ such that
$c\theta_I\leq\theta_{I'}\leq C\theta_I$
for any $I\in\mathcal D(Q)$ and any dyadic child $I'$ of $I$. Let
\begin{equation}\label{eq:dyadic-bad-family}
 K_I:=|I|\theta_I^\sigma\ \text{and}\
 \mathcal B:=\{I=[a,b]\in\mathcal D(Q):|v(b)-v(a)|>|I|\theta_I\}.
\end{equation}
Then the following assertions hold.
\begin{enumerate}[{\rm (i)}]
\item If there exists $c_\ast\in(0,1]$ such that
$K_I\geq c_\ast K_Q$ for any $I\in\mathcal D(Q)$, then
\[
 |v(b_Q)-v(a_Q)|^\sigma
 \lesssim |Q|^{\sigma-1}
 \left[K_Q+\sum_{I\in\mathcal B}K_I\right].
\]
\item If $Q\notin\mathcal B$ and
$\max_{I\in\mathcal D_k(Q)}\theta_I\to0$ as $k\to\infty$, then
\[
 |v(b_Q)-v(a_Q)|^\sigma
 \lesssim |Q|^{\sigma-1}\sum_{I\in\mathcal B}K_I.
\]
\end{enumerate}
The implicit positive constants in (i) and (ii) depend only on $\sigma$, $c$, and $C$.
\end{lemma}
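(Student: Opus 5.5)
We prove both parts by a stopping-time decomposition of $Q$ into maximal good intervals together with a geometric weighting that absorbs the exponent $\sigma$. Normalize $|Q|=1$ by scaling; the statement is scale invariant once $K_I=|I|\theta_I^\sigma$ and the factor $|Q|^{\sigma-1}$ are tracked.

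For (i) we use a chaining argument. Fix $\varepsilon\in(0,1)$ small, to be chosen depending on $\sigma$. We build a sequence of partitions of $Q$ into dyadic intervals by refining only bad intervals. Start with $\{Q\}$. If $Q\notin\mathcal B$, then $|v(b_Q)-v(a_Q)|\le\theta_Q=K_Q^{1/\sigma}$ and we are done. Otherwise split $Q$ into its children and keep going, splitting only the bad intervals. At every finite stage the telescoping identity gives
\[
|v(b_Q)-v(a_Q)|\le\sum_{I\text{ good, selected}}|I|\theta_I+\sum_{I\text{ bad, current}}|v(b_I)-v(a_I)|.
\]
The sum over bad intervals at generation $k$ is the problem, since the process need not terminate. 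This is where the hypothesis $K_I\ge c_\ast K_Q$ enters: it forces the number of bad intervals to be finite if $\sum_{\mathcal B}K_I<\infty$. Indeed, if the right side of (i) is finite, then $\sharp\mathcal B\le(K_Q+\sum_{\mathcal B}K_I)/(c_\ast K_Q)<\infty$, so the stopping process ends after finitely many steps with a partition into good intervals. If the right side is infinite, there is nothing to prove.

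We then bound $\sum_{I\text{ good selected}}|I|\theta_I$. Each good selected interval is $Q$ itself or a child of a bad interval, and each bad interval has two children. Hence
\[
\sum|I|\theta_I\le\theta_Q+2\sum_{J\in\mathcal B}|J|\,C\theta_J.
\]
By Hölder, $\sum_{J\in\mathcal B}|J|\theta_J\le(\sum|J|)^{1-1/\sigma}(\sum K_J)^{1/\sigma}$. The remaining issue is that $\sum_{J\in\mathcal B}|J|$ can exceed $1$ when bad intervals nest. To handle this, weight the Hölder inequality by $|J|^{\alpha}$ with $0<\alpha<1$, writing $|J|\theta_J=|J|^{\alpha(1-1/\sigma)}\cdot|J|^{1-\alpha(1-1/\sigma)}\theta_J$. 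Then $\sum_J|J|^{\alpha}\le\sum_k2^{k}\cdot2^{-k\alpha}\cdot(\dots)$, which diverges, so this weighting alone does not suffice. Instead we use that $\theta$ varies geometrically only boundedly: $K_J\ge c_\ast K_Q$ gives $\theta_J\ge (c_\ast/|J|)^{1/\sigma}\theta_Q$. Therefore
\[
|J|\theta_J\le|J|\theta_J\cdot\bigl(K_J/(c_\ast K_Q)\bigr)^{1-1/\sigma},
\]
and summing gives
\[
\sum|J|\theta_J\le\sum K_J^{1-1/\sigma}|J|^{1/\sigma}\theta_J\,c_\ast^{-1+1/\sigma}K_Q^{-1+1/\sigma}=c_\ast^{-1+1/\sigma}K_Q^{-1+1/\sigma}\sum K_J.
\]
Combining the two regimes, when $\sum_{\mathcal B}K_J\le K_Q$ use the first bound, and otherwise raise to the power $\sigma$ and use $\sharp\mathcal B$ control. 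This yields $|v(b_Q)-v(a_Q)|^\sigma\lesssim K_Q+\sum_{\mathcal B}K_I$ after checking the homogeneity. I expect this balancing to be the main obstacle.

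For (ii) we use a different argument, in the spirit of Gobbino–Picenni. Since $Q\notin\mathcal B$ but $\theta_I\to0$ uniformly, the trivial bound $|v(b_Q)-v(a_Q)|\le\theta_Q$ is not acceptable. Instead, refine all intervals down to a generation $k$ where $\max\theta_I$ is tiny, and telescope across $\mathcal D_k(Q)$. The good intervals of generation $k$ contribute at most $\max_{\mathcal D_k}\theta_I\to0$. The bad ones are charged to maximal bad ancestors, and the increments across bad intervals are controlled via their good parents, using $Q\notin\mathcal B$ as the anchor. The comparability constants $c,C$ give $\theta_{\text{parent}}\sim\theta_{\text{child}}$, so each such parent bound $|I|\theta_I$ is bounded by a term over $\mathcal B$. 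Hölder with the same $K$-weighting as in (i), now with no $K_Q$ term since $Q$ is good, then gives the claim after letting $k\to\infty$.
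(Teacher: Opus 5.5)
\textbf{There is a genuine gap in your proof of (i), exactly at the ``balancing'' step you flag.} The stopping procedure does give a finite partition $\mathcal C\subset\mathcal D(Q)\setminus\mathcal B$ of $Q$. The trouble starts next: you charge $|I|\theta_I$ for each leaf to $|J|\theta_J$ for its bad parent, and then apply H\"older over all of $\mathcal B$. That intermediate quantity is too lossy, and no case split can repair it.

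Set $|Q|=1$ and $S:=\sum_{J\in\mathcal B}K_J$. For $\sigma>1$ the bound $\theta_Q+\sum_{J\in\mathcal B}|J|\theta_J\lesssim(K_Q+S)^{1/\sigma}$ is false. Take $Q=[0,1]$ and $\theta_I:=M_k$ for $I\in\mathcal D_k(Q)$, where:
\begin{itemize}
\item $M_k:=\min\{4^k,T\}$ for $k\le k_2:=\lfloor\sigma\log_2T\rfloor$;
\item $M_{k_2+1}:=4T$;
\item $M_{k+1}:=2^{1/\sigma}M_k$ for $k>k_2$.
\end{itemize}
Let $v(x):=2Tx$. The hypotheses hold with $c=1$, $C=4$, $c_\ast=1$, and $\mathcal B=\bigcup_{k\le k_2}\mathcal D_k(Q)$. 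Moreover
\[
 \sum_{J\in\mathcal B}|J|\theta_J=\sum_{k\le k_2}M_k\gtrsim T\log T,
 \qquad
 S=\sum_{k\le k_2}M_k^\sigma\lesssim T^\sigma\log T,
\]
so the required ratio grows like $(\log T)^{\sigma-1}$.

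Your quantitative use of $K_J\ge c_\ast K_Q$ (or of $\sharp\mathcal B\le S/(c_\ast K_Q)$) yields only $|v(b_Q)-v(a_Q)|^\sigma\lesssim K_Q+S^\sigma K_Q^{1-\sigma}$. This has the wrong homogeneity when $S\gg K_Q$. Even for $S\le K_Q$, its constant depends on $c_\ast$, which the statement does not allow.

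\textbf{The missing idea is to apply H\"older on the terminal partition itself}, whose lengths add up to $|Q|$:
\[
 |v(b_Q)-v(a_Q)|\le\sum_{I\in\mathcal C}|I|\theta_I
 \le|Q|^{1-\frac1\sigma}\Bigl(\sum_{I\in\mathcal C}K_I\Bigr)^{\frac1\sigma}.
\]
Only afterwards charge each leaf $I'\ne Q$ to its bad parent $I$ via $K_{I'}\le\frac{C^\sigma}{2}K_I$, with at most two leaves per parent. This gives $\sum_{I\in\mathcal C}K_I\le K_Q+C^\sigma S$. Alternatively, your order of operations works if you apply H\"older only over the bad parents of leaves, whose total length is at most $2|Q|$. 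The hypothesis $K_I\ge c_\ast K_Q$ is needed only to make the procedure terminate.

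\textbf{For (ii), your sketch points the right way but needs two repairs.} First, its last step appeals to the same $K$-weighting as in (i), and $K_I\ge c_\ast K_Q$ is not even assumed in (ii). Second, the grouping is underspecified: parents of maximal bad intervals can be nested, so they do not form part of a partition. For example, let $Q$ be good with a bad child $Q_1$ and a good child $Q_2$ that itself has a bad child.

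The fix is as follows.
\begin{itemize}
\item Let $\mathcal P$ be the family of maximal good intervals having a bad child. These are pairwise disjoint, and every bad interval lies in a member of $\mathcal P$ of strictly smaller generation.
\item Telescope over the members of $\mathcal P$ of generation at most $m$, together with the remaining (necessarily good) intervals of $\mathcal D_m(Q)$. This gives $|v(b_Q)-v(a_Q)|\le\sum_{P\in\mathcal P}|P|\theta_P+|Q|\max_{\mathcal D_m(Q)}\theta_I$.
\item Let $m\to\infty$ and apply H\"older over the disjoint family $\mathcal P$.
\item Finish with $K_P\le2c^{-\sigma}K_{P'}$ for a chosen bad child $P'$ of each $P$; these children are distinct.
\end{itemize}
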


\begin{proof}
We first show (i). We may assume that
$\sum_{I\in\mathcal B}K_I<\infty$. Starting from $Q$, recursively subdivide any interval in
$\mathcal B$ into its two dyadic children, while retaining intervals
outside $\mathcal B$ without further subdivision.
Since $K_I\geq c_\ast K_Q>0$ for any $I\in\mathcal B$, it follows that
\[
 \sharp \mathcal B\leq\frac{1}{c_\ast K_Q}
 \sum_{I\in\mathcal B}K_I<\infty.
\]
Thus, the procedure terminates after at most
$\sharp \mathcal B$ subdivisions, yielding a finite partition
$\mathcal C\subset\mathcal D(Q)\setminus\mathcal B$ of $Q$.
The triangle inequality and H\"older's inequality give
\[
 |v(b_Q)-v(a_Q)|^\sigma
 \leq |Q|^{\sigma-1}\sum_{I\in\mathcal C}K_I.
\]
Any member of $\mathcal C$ other than $Q$ has a parent in
$\mathcal B$. Since $K_{I'}\leq\frac{C^\sigma}{2}K_I$ for a child
$I'$ of $I$, it follows that
$\sum_{I\in\mathcal C}K_I\lesssim K_Q+\sum_{I\in\mathcal B}K_I$,
which proves (i).

We next prove (ii). Let $\mathcal P$ consist of the maximal dyadic
intervals outside $\mathcal B$ that have a child in $\mathcal B$.
The intervals in $\mathcal P$ have pairwise disjoint interiors by
maximality. From $Q\notin\mathcal B$, we deduce that the first interval in
$\mathcal B$ along the dyadic chain from $Q$ to any $I\in\mathcal B$
has a parent outside $\mathcal B$. Thus, $I$ is contained in an
interval of $\mathcal P$. For any $m\in\mathbb Z_+$, the members of
$\mathcal P$ of generation at most $m$, together with the intervals
in $\mathcal D_m(Q)$ not contained in any such member, form a finite
partition of $Q$. Since any interval in $\mathcal B$ of generation
$m$ is contained in a member of $\mathcal P$ of strictly smaller
generation, it follows that none of the intervals in this partition
belongs to $\mathcal B$. Using the triangle inequality, we find that
\[
 |v(b_Q)-v(a_Q)|
 \leq \sum_{P\in\mathcal P}|P|\theta_P
       +|Q|\max_{I\in\mathcal D_m(Q)}\theta_I.
\]
Letting $m\to\infty$ and applying H\"older's inequality, we obtain
\[
 |v(b_Q)-v(a_Q)|^\sigma
 \leq |Q|^{\sigma-1}\sum_{P\in\mathcal P}K_P.
\]
Choose a child $P'\in\mathcal B$ of any $P\in\mathcal P$.
Then the chosen children are distinct and
$K_P\leq2c^{-\sigma}K_{P'}$, which shows (ii).
This completes the proof of Lemma~\ref{lem:dyadic-endpoint}.
\end{proof}

The following radial estimate follows from
\cite[Lemma~3.2]{dlyyz26b} by a standard annular argument using
the doubling condition and the reverse doubling condition.

\begin{lemma}\label{lem:volume-radial}
Let $(X,\rho,\mu)$ be a doubling and reverse doubling metric measure
space. Let $C_\ast\in(0,\infty)$ and
$\beta\in\mathbb R\setminus\{0\}$. Then, for any $x\in X$,
$t\in(0,\infty)$, and $R_0\in(0,C_\ast\operatorname{diam} X)$,
\begin{equation*}
 \int_0^{R_0}
 \mathbf 1_{\{t>V(x,r)^\beta\}}
 V(x,r)^\beta\frac{\,d r}{r}
 \lesssim t,
\end{equation*}
where the implicit positive constant depends only on $\beta$,
$C_\ast$, and the geometric constants of $X$.
\end{lemma}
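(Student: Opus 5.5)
The statement to prove is Lemma~\ref{lem:volume-radial}. The plan is to discretize the radial integral dyadically and reduce it to a geometric sum. Set $r_j:=2^{-j}R_0$ for $j\in\mathbb Z_+$ and split $(0,R_0]=\bigcup_{j\ge0}(r_{j+1},r_j]$. On each annulus in $r$, the doubling condition \eqref{eq:doubling} gives $V(x,r)\sim V(x,r_j)$ with constants depending only on $C_D$. Hence $V(x,r)^\beta\sim a_j:=V(x,r_j)^\beta$, uniformly in $j$, for either sign of $\beta$. Since $\int_{r_{j+1}}^{r_j}\frac{dr}{r}=\log2$, the integral is bounded by a constant times
\[
\sum_{j\ge0}\mathbf 1_{\{t>c_1a_j\}}\,a_j .
\]
Here $c_1\in(0,1]$ is a fixed constant coming from the comparison $V(x,r)^\beta\ge c_1 a_j$ on the $j$-th annulus; it is this lower comparison that lets us replace the indicator $\mathbf 1_{\{t>V(x,r)^\beta\}}$ by $\mathbf 1_{\{t>c_1a_j\}}$.

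The key step is to show that $\{a_j\}$ grows geometrically in the appropriate direction, which comes from reverse doubling \eqref{eq:volume-reverse-growth}. First fix an integer $m$ with $c_d2^{md}\ge2$. For radii $r_{j+m}\le r_j$ we want $V(x,r_j)\ge2V(x,r_{j+m})$. This requires $r_{j+m}<\operatorname{diam}X/3$ and $2^m<\operatorname{diam}X/(3r_{j+m})$, that is, $r_j<\operatorname{diam}X/3$.

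Because $R_0<C_\ast\operatorname{diam}X$, only finitely many indices, at most $j_0=j_0(C_\ast)$ of them, have $r_j\ge\operatorname{diam}X/3$. For those indices we simply bound each term $\mathbf 1_{\{t>c_1a_j\}}a_j\le t/c_1$, which is acceptable since their number depends only on $C_\ast$. For the remaining indices, along each residue class mod $m$ the volumes $V(x,r_j)$ decrease at least geometrically, by a factor $1/2$ every $m$ steps, as $j$ increases.

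Now split by the sign of $\beta$.
\begin{itemize}
\item If $\beta>0$, then $a_j$ decreases geometrically in $j$ along each residue class. The terms with $c_1a_j<t$ form a tail starting at the first such index, so their sum is $\lesssim$ the largest retained term, which is $<t/c_1$. This uses the factor $2^{-\beta}<1$ per $m$ steps, so the constant depends on $\beta$.
\item If $\beta<0$, then $a_j$ increases geometrically in $j$. The retained terms, those with $a_j<t/c_1$, form an initial segment of each residue class, and the geometric series is dominated by its last term, again $<t/c_1$.
\end{itemize}
Summing over the $m$ residue classes and adding the finitely many large-radius terms gives the bound $\lesssim t$. All constants depend only on $\beta$, $C_\ast$, $C_D$, $d$, and $c_d$, as claimed.

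The main technical care is in the threshold comparison: the indicator involves $V(x,r)^\beta$, not $a_j$. That is why we use the one-sided bound $V(x,r)^\beta\ge c_1a_j$, so that $\mathbf 1_{\{t>V(x,r)^\beta\}}\le\mathbf 1_{\{t>c_1a_j\}}$, together with the two-sided bound $V(x,r)^\beta\le C a_j$ for the integrand itself. Both comparisons hold for either sign of $\beta$ by doubling. With this in place, the rest is the geometric-series bookkeeping described above, and the argument is exactly the standard annular reduction from \cite[Lemma~3.2]{dlyyz26b}.
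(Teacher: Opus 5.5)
Your proof is correct and takes essentially the same approach as the paper, which gives no details beyond saying the lemma follows from \cite[Lemma~3.2]{dlyyz26b} by a standard annular argument using doubling and reverse doubling. Your dyadic decomposition in $r$, the comparison $V(x,r)^\beta\sim V(x,r_j)^\beta$ on each annulus, and the geometric-series bound along residue classes mod $m$ carry out that argument explicitly, and you correctly handle the one delicate point: reverse doubling is only available below $\operatorname{diam}X/3$, but only boundedly many indices (depending on $C_\ast$) exceed it, each contributing at most $t/c_1$.
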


\begin{proof}[Proof of Proposition~\ref{prop:volume-one-dimensional}]
Since $A\in(0,C_\ast\operatorname{diam}X)$,
it follows from \eqref{eq:volume-reverse-growth} and
the proof of $(\mathrm{RD1})\Rightarrow(\mathrm{RD2})$
in \cite[Lemma~2.2]{yz11} that
\begin{align}\label{eq:rd-d-ex}
V(x,R)\gtrsim\left(\frac{R}{r}\right)^dV(x,r)
\end{align}
for any $x\in X$ and $0<r\le R\le 8A$,
where the implicit positive constant depends only on $C_\ast$
and the geometric constants of $X$.

Define the projection $\pi:\mathbb R\to[0,A]$ by
$\pi(t):=\min\{A,\max\{0,t\}\},$
and let
\begin{align}\label{eq:proj}
\widetilde\eta:=\eta\circ\pi,
 \quad \widetilde v:=v\circ\pi,
 \quad \widetilde h:=h\circ\pi.
\end{align}
Since $\pi$ is $1$-Lipschitz, the assumption
\eqref{eq:volume-one-dimensional-coarse} remains valid for
$\widetilde{v}$ and $\widetilde{h}$; that is,
\begin{align}
 |\widetilde v(u)-\widetilde v(w)|>
 R V(\widetilde\eta(u),R)^\gamma
\ \Longrightarrow\
 \widetilde h(u)+\widetilde h(w)>
 V(\widetilde\eta(u),R)^\gamma
 \label{eq:volume-extended-contrapositive}
\end{align}
for any distinct $u,w\in\mathbb R$ and any
$R\in[|u-w|,4A]$.

Suppose that $\gamma\in(-\infty,-\frac{1}{d\sigma}]$.  Fix
$\tau\in[2A,4A)$ and $a\in[-A,0)$, and let $Q=[a,a+\tau]$.
We use $\mathcal D_k(Q)$ and $\mathcal D(Q)$ as defined before
Lemma~\ref{lem:dyadic-endpoint}.
For $I=[a_I,b_I]\in\mathcal D(Q)$, write
\begin{equation}\label{eq:volume-dyadic-weights}
 r_I:=b_I-a_I,\quad W_I:=V(\widetilde\eta(a_I),r_I),
 \quad K_I:=r_IW_I^{\gamma\sigma}.
\end{equation}
Let $\mathcal B$ be the family in \eqref{eq:dyadic-bad-family}
with $v:=\widetilde v|_Q$ and $\theta_I:=W_I^\gamma$.
If $I\in\mathcal D_j(Q)$, $L\in\mathcal D_{j+k}(Q)$, and
$L\subset I$, then $r_L=2^{-k}r_I$ and
$\rho(\widetilde\eta(a_L),\widetilde\eta(a_I))\leq r_I$.
This implies $B(\widetilde\eta(a_L),2r_I)\subset B(\widetilde\eta(a_I),3r_I)$,
which, together with \eqref{eq:rd-d-ex} and the doubling condition, gives
\[
 2^{kd}W_L\lesssim
 V(\widetilde\eta(a_L),2r_I)
 \lesssim W_I.
\]
It follows that
\begin{equation}\label{eq:volume-K-monotonicity}
 \frac{K_L}{K_I}\gtrsim
 2^{-k(1+d\gamma\sigma)}\gtrsim1,
\end{equation}
where the implicit positive constants depend only on $\gamma$, $\sigma$,
$C_\ast$, and the geometric constants of $X$.
For a dyadic child $I'$ of $I$, the $1$-Lipschitz property of
$\widetilde\eta$ and the doubling condition give $W_{I'}\sim W_I$.
Applying Lemma~\ref{lem:dyadic-endpoint}(i) and using
\eqref{eq:volume-K-monotonicity}, we obtain
\begin{equation}\label{eq:volume-negative-partition-bound}
 |v(0)-v(A)|^\sigma
 \lesssim\tau^{\sigma-1}
 \left[K_Q+
 \sum_{k\geq1}\sum_{I\in\mathcal D_k(Q)\cap\mathcal B}K_I\right].
\end{equation}
We integrate \eqref{eq:volume-negative-partition-bound} against
\begin{equation}\label{eq:volume-grid-averaging-measure}
 \frac{1}{A\log2}\frac{\,d\tau}{\tau}\,d a
 \quad\text{on }[2A,4A)\times[-A,0).
\end{equation}
For fixed $k\in\mathbb N$, let $r:=2^{-k}\tau$.  Define
\begin{equation}\label{eq:volume-difference-density}
 \Psi(u,r):=
 \mathbf 1_{\{|\widetilde v(u+r)-\widetilde v(u)|>
 rV(\widetilde\eta(u),r)^\gamma\}}
 V(\widetilde\eta(u),r)^{\gamma\sigma}.
\end{equation}
The intervals in $\mathcal D_k(Q)$ are
\[
 I_{k,i}=[a+ir,a+(i+1)r],
 \quad i\in\{0,\ldots,2^k-1\}.
\]
It follows from the definition of $K_I$ that
\begin{align*}
 \int_{-A}^0
 \sum_{I\in\mathcal D_k(Q)\cap\mathcal B}K_I\,d a
 &\leq r
 \sum_{i=0}^{2^k-1}\int_{-A}^0\Psi(a+ir,r)\,d a\\
 &= r
 \int_{-A}^{4A}\Psi(u,r)
 \sum_{i=0}^{2^k-1}\mathbf 1_{[-A+ir,ir]}(u)\,d u.
\end{align*}
The last identity follows from $u=a+ir$.  Moreover,
\begin{align*}
 \sum_{i=0}^{2^k-1}\mathbf 1_{[-A+ir,ir]}(u)
 &=\sharp \{i\in\{0,\ldots,2^k-1\}:ir\in[u,u+A]\}\leq1+\frac{A}{r}.
\end{align*}
Since $k\in\mathbb N$ and $\tau\in[2A,4A)$, we infer that
$r=2^{-k}\tau\in(0,2A)$.
Combining this and the preceding estimates gives
\[
 \frac{1}{A}\int_{-A}^0
 \sum_{I\in\mathcal D_k(Q)\cap\mathcal B}K_I\,d a
 \lesssim\int_{-A}^{4A}\Psi(u,r)\,d u.
\]
Therefore, summing the fixed-scale estimate in $k\in\mathbb N$ yields
\begin{align}
 &\frac{1}{A\log2}\int_{2A}^{4A}\frac{\,d\tau}{\tau}
 \int_{-A}^0
 \sum_{k\geq1}\sum_{I\in\mathcal D_k(Q)\cap\mathcal B}K_I\,d a
 \notag\\
 &\quad\lesssim
 \sum_{k\geq1}
 \int_{2^{1-k}A}^{2^{2-k}A}
 \int_{-A}^{4A}\Psi(u,r)\,d u\frac{\,d r}{r}\lesssim
 \int_{-A}^{4A}\int_0^{2A}\Psi(u,r)
 \frac{\,d r}{r}\,d u.
 \label{eq:volume-negative-grid-average}
\end{align}
Since $V(\widetilde\eta(u),r)\sim V(\widetilde\eta(u+r),r)$,
\eqref{eq:volume-extended-contrapositive} shows that the right-hand side of
\eqref{eq:volume-negative-grid-average} is bounded by a constant
multiple of
\begin{equation}\label{eq:volume-negative-radial-reduction}
 \int_{-A}^{6A}\int_0^{2A}
 \mathbf 1_{\{\widetilde h(u)>c_2
 V(\widetilde\eta(u),r)^\gamma\}}
 V(\widetilde\eta(u),r)^{\gamma\sigma}
 \frac{\,d r}{r}\,d u,
\end{equation}
where $c_2\in(0,\infty)$ depends only on $\gamma$ and the
doubling constant.
Applying Lemma~\ref{lem:volume-radial} with
$C_\ast$ replaced by $3C_\ast$, $x=\widetilde\eta(u)$,
$\beta=\gamma\sigma$, $t=(\frac{\widetilde h(u)}{c_2})^\sigma$,
and $R_0=2A$ when $\widetilde h(u)\in(0,\infty)$, we conclude that
\eqref{eq:volume-negative-radial-reduction} is bounded by a constant
multiple of
\[
 Ah(0)^\sigma+Ah(A)^\sigma+\int_0^Ah(u)^\sigma\,d u.
\]
Using $K_Q\lesssim A V(\eta(0),A)^{\gamma\sigma}$
and $\tau\sim A$ in the average of
\eqref{eq:volume-negative-partition-bound}, we obtain
\[
 |v(0)-v(A)|\le c A V(\eta(0),A)^\gamma
 +cA\left[h(0)^\sigma+h(A)^\sigma+
 \fint_0^Ah(u)^\sigma\,d u\right]^{\frac1\sigma},
\]
where $c$ is a positive constant depending only on
$\gamma$, $\sigma$, $C_\ast$, and the geometric constants of $X$.
This implies that \eqref{eq:v-p-conclusion} holds.

Suppose now that $\gamma\in(0,\infty)$. After replacing $v$ by
$-v$ if necessary, we may assume that $v(0)\leq v(A)$. The
$1$-Lipschitz truncation
\begin{equation}\label{eq:volume-positive-truncation}
 v(u)\mapsto v(0)+\min\left\{(v(u)-v(0))_+,v(A)-v(0),
 AV(\eta(0),A)^\gamma\right\}
\end{equation}
preserves \eqref{eq:volume-one-dimensional-coarse} and replaces the
endpoint difference by
$\min\{|v(A)-v(0)|,AV(\eta(0),A)^\gamma\}$.
We continue to write $v$ for the truncated function and use its
constant extension $\widetilde v$ from \eqref{eq:proj}. Then
\begin{equation}\label{eq:volume-positive-large}
 v([0,A])\subset[v(0),v(A)],\quad
 v(A)-v(0)\leq AV(\eta(0),A)^\gamma.
\end{equation}
We use the same intervals $Q$ and the notation in
\eqref{eq:volume-dyadic-weights}, \eqref{eq:dyadic-bad-family}, and
\eqref{eq:volume-difference-density}, now for the truncated function in
\eqref{eq:volume-positive-truncation}. Since $\widetilde v(a)=v(0)$,
$\widetilde v(a+\tau)=v(A)$, and
$W_Q=V(\eta(0),\tau)\geq V(\eta(0),A)$, it follows from
\eqref{eq:volume-positive-large} and $\tau\geq2A$ that
$Q\notin\mathcal B$. From the reverse doubling condition and the doubling condition,
we infer that
\begin{equation*}
 \max_{I\in\mathcal D_k(Q)}\theta_I
 \lesssim 2^{-kd\gamma}V(\eta(0),A)^\gamma\to0
 \quad\text{as }k\to\infty.
\end{equation*}
Applying Lemma~\ref{lem:dyadic-endpoint}(ii), we obtain
\begin{equation}\label{eq:volume-positive-grid-bound}
 [v(A)-v(0)]^\sigma
 \lesssim\tau^{\sigma-1}
 \sum_{k\geq1}\sum_{I\in\mathcal D_k(Q)\cap\mathcal B}K_I.
\end{equation}
The averaging argument leading to
\eqref{eq:volume-negative-grid-average} and the reduction to
\eqref{eq:volume-negative-radial-reduction} apply for either sign of
$\gamma$. Integrating
\eqref{eq:volume-positive-grid-bound} against the normalized measure
in \eqref{eq:volume-grid-averaging-measure}, and using $\tau\sim A$ and
Lemma~\ref{lem:volume-radial}, we conclude that
\begin{align*}
 [v(A)-v(0)]^\sigma
 &\lesssim A^{\sigma-1}
 \int_{-A}^{4A}\int_0^{2A}\Psi(u,r)\frac{\,d r}{r}\,d u\\
 &\lesssim A^{\sigma}
 \left[h(0)^\sigma+h(A)^\sigma+\fint_0^Ah(u)^\sigma\,d u\right].
\end{align*}
Taking the $\sigma$-th root and recalling the endpoint difference
after the truncation in \eqref{eq:volume-positive-truncation} gives
\eqref{eq:volume-positive-conclusion} with
$c=1$.
This completes the proof of Proposition~\ref{prop:volume-one-dimensional}.
\end{proof}

On the real line, Proposition~\ref{prop:volume-one-dimensional}
gives the following corollary.

\begin{corollary}
\label{prop:one-dimensional-endpoint}
Let $p\in[1,\infty)$, $\gamma\in\mathbb R\setminus\{0\}$,
and $A\in(0,\infty)$, and let $v:[0,A]\to\mathbb R$ and
$h:[0,A]\to[0,\infty]$ be measurable. Assume that, for any
$s,t\in[0,A]$ and $R\in[|s-t|,4A]\cap(0,\infty)$,
\begin{equation}\label{eq:coarse-1d}
 h(s)+h(t)\leq R^\gamma
 \quad\Longrightarrow\quad
 |v(s)-v(t)|\leq R^{1+\gamma}.
\end{equation}
Then there exists a constant $c\in(0,\infty)$ such that the following
statements hold.
\begin{enumerate}[{\rm (i)}]
\item If $\gamma\in(-\infty,-\frac1p]$, then
\begin{equation}\label{eq:endpoint-chain}
 \left(|v(0)-v(A)|-cA^{1+\gamma}\right)_+
 \lesssim A\left[h(0)^p+h(A)^p+
 \fint_0^Ah(s)^p\,d s\right]^{\frac1p}.
\end{equation}
\item If $\gamma\in(0,\infty)$, then
\begin{equation}\label{eq:endpoint-conclusion}
 \min\left\{|v(0)-v(A)|,cA^{1+\gamma}\right\}
 \lesssim A\left[h(0)^p+h(A)^p+
 \fint_0^Ah(s)^p\,d s\right]^{\frac1p}.
\end{equation}
\end{enumerate}
The constant $c$ and the implicit positive constants depend only on
$\gamma$ and $p$.
\end{corollary}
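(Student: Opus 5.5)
We will deduce the corollary directly from Proposition~\ref{prop:volume-one-dimensional}, applied on the real line
\[
 (X,\rho,\mu)=(\mathbb R,|\cdot-\cdot|,dx).
\]
This space is doubling and reverse doubling with lower dimension $d=1$, since $V(x,r)=2r$ for every $x\in\mathbb R$ and $r\in(0,\infty)$. We take $\eta:[0,A]\to\mathbb R$ to be the identity, which is $1$-Lipschitz, and $\sigma:=p$. Because $\operatorname{diam}\mathbb R=\infty$, every $A\in(0,\infty)$ satisfies $A<C_\ast\operatorname{diam}X$ with $C_\ast:=1$. For these choices the hypothesis $\gamma\le-\frac{1}{d\sigma}$ of Proposition~\ref{prop:volume-one-dimensional}(i) becomes exactly $\gamma\le-\frac1p$.

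The only point needing care is matching the normalizations. On $\mathbb R$ we have $V(\eta(u),R)^\gamma=2^\gamma R^\gamma$, whereas \eqref{eq:coarse-1d} is phrased with $R^\gamma$. We absorb the factor $2^\gamma$ by rescaling: set
\[
 \widehat h:=2^{-\gamma}h\quad\text{and}\quad \widehat v:=2^{-\gamma}v.
\]
Suppose $\widehat h(u)+\widehat h(w)\le 2^{\gamma}R^\gamma$. Equivalently, $h(u)+h(w)\le 2^{2\gamma}R^\gamma$, which is not the same as $h(u)+h(w)\le R^\gamma$. So the rescaling must be arranged more carefully: we want the hypothesis of \eqref{eq:volume-one-dimensional-coarse} to reduce exactly to that of \eqref{eq:coarse-1d}, and the conclusion likewise.

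Take instead $\widehat h:=2^{\gamma}h$ and $\widehat v:=2^{\gamma}v$. Then
\[
 \widehat h(u)+\widehat h(w)\le 2^\gamma R^\gamma
 \iff h(u)+h(w)\le R^\gamma,
\]
and by \eqref{eq:coarse-1d} this gives $|v(u)-v(w)|\le R^{1+\gamma}$. Multiplying by $2^\gamma$ yields
\[
 |\widehat v(u)-\widehat v(w)|\le 2^\gamma R^{1+\gamma}=R\,V(\eta(u),R)^\gamma .
\]
Hence $(\widehat v,\widehat h)$ satisfies \eqref{eq:volume-one-dimensional-coarse}. The restriction $R\in[|u-w|,4A]\cap(0,\infty)$ matches, since $u\ne w$ forces $R>0$.

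Applying Proposition~\ref{prop:volume-one-dimensional} to $(\widehat v,\widehat h)$, we have $AV(0,A)^\gamma=2^\gamma A^{1+\gamma}$. In part (i) we get
\[
 2^\gamma\bigl(|v(0)-v(A)|-cA^{1+\gamma}\bigr)_+\lesssim 2^\gamma A\Bigl[h(0)^p+h(A)^p+\fint_0^Ah(s)^p\,ds\Bigr]^{\frac1p}.
\]
Dividing by $2^\gamma$ gives \eqref{eq:endpoint-chain}. In part (ii) the same division gives \eqref{eq:endpoint-conclusion}, because
\[
 \min\{2^\gamma s,\,c2^\gamma a\}=2^\gamma\min\{s,\,ca\}.
\]
The geometric constants of $\mathbb R$ are absolute, so $c$ and the implicit constants depend only on $\gamma$ and $p$.

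There is no substantive obstacle. The one thing to check is that the rescaling is consistent in both directions, including when $h$ takes the value $\infty$; this is harmless, because the implication is vacuous then.
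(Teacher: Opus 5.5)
Your proposal is correct and is essentially the paper's own proof: apply Proposition~\ref{prop:volume-one-dimensional} on $(\mathbb R,|\cdot-\cdot|,dx)$ to the rescaled pair $(2^\gamma v,2^\gamma h)$, with $\eta$ the identity, $C_\ast=d=1$, $\sigma=p$, and $V(x,R)=2R$, and then divide the resulting estimates by $2^\gamma$. In a final write-up, drop the discarded first attempt with the factor $2^{-\gamma}$.
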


\begin{proof}
Since $V(x,R)=2R$ on $X:=\mathbb R$,
it follows from \eqref{eq:coarse-1d} that $2^\gamma v$ and
$2^\gamma h$ satisfy \eqref{eq:volume-one-dimensional-coarse}
with $\eta(t)=t$. Applying Proposition~\ref{prop:volume-one-dimensional}
with $C_\ast=d=1$ and $\sigma=p$, and multiplying the resulting
estimates by $2^{-\gamma}$, we obtain both conclusions with the
same constant $c$.
This completes the proof of Corollary~\ref{prop:one-dimensional-endpoint}.
\end{proof}

\begin{remark}
The hypotheses \eqref{eq:volume-one-dimensional-coarse} and
\eqref{eq:coarse-1d} are weaker than the Haj\l asz pointwise
inequality in the following sense. Recall that
this inequality for a function $v$ and a nonnegative function $h$
on $[0,A]$ is
\begin{equation}\label{eq:standard-hajlasz-1d}
 |v(s)-v(t)|\leq |s-t|[h(s)+h(t)].
\end{equation}
Here, we assume that \eqref{eq:standard-hajlasz-1d} holds
for any $s,t\in[0,A]$. If
$R\in[|s-t|,4A]\cap(0,\infty)$ and the left-hand side condition in
\eqref{eq:coarse-1d} holds, then
\begin{align*}
 |v(s)-v(t)|
 &\leq |s-t|[h(s)+h(t)]
 \leq R[h(s)+h(t)]
 \leq R^{1+\gamma}.
\end{align*}
Thus, \eqref{eq:standard-hajlasz-1d} implies
\eqref{eq:coarse-1d}. The same argument, with $R^\gamma$ replaced by
$V(\eta(s),R)^\gamma$, also gives \eqref{eq:volume-one-dimensional-coarse}
for any $1$-Lipschitz map $\eta:[0,A]\to X$.
\end{remark}

\subsection{Pointwise Estimates for the Nonlocal Energy Densities}
\label{subsec:pointwise-estimates}

In this subsection, we establish pointwise bounds for the nonlocal
energy densities. These bounds verify the hypotheses of the
one-dimensional truncated difference estimates along Lipschitz curves.

Let $(X,\rho,\mu)$ be a doubling metric measure space,
$p\in[1,\infty)$, $\gamma\in\mathbb R\setminus\{0\}$,
$\lambda\in(0,\infty)$, and $E\subset X$ be measurable.
For $\phi\in\{\rho,V\}$,
a measurable function $f:X\to\mathbb R$, and
$x\in X$, define
\begin{equation}\label{eq:G}
 G^\phi(x):=\lambda\left[
 \int_E
 \mathbf 1_{\{|f(x)-f(z)|>\lambda\rho(x,z)\phi(x,z)^\gamma\}}
 \frac{\phi(x,z)^{\gamma p}}{V(x,z)}\,d\mu(z)
 \right]^{\frac1p}.
\end{equation}
By \eqref{eq:energy}, for any measurable
$E_0\subset E$,
\begin{equation}\label{eq:G-mass}
 \int_{E_0}[G^\phi(x)]^p\,d\mu(x)
 \leq\int_E[G^\phi(x)]^p\,d\mu(x)
 =\mathcal E^\phi_{\lambda,\gamma,p}(f;E).
\end{equation}

The following lemma gives a common pointwise estimate for the two
energy densities. We use the notation $\phi(B)$ introduced in
Section~\ref{sec:main-results}.

\begin{lemma}\label{lem:spatial-coarse}
Let $(X,\rho,\mu)$ be a doubling and reverse doubling metric measure space,
$p\in[1,\infty)$, $\gamma\in\mathbb R\setminus\{0\}$,
$\phi\in\{\rho,V\}$, and $A\in[1,\infty)$.
Let $f:X\to\mathbb R$ be measurable, and let $B$ be a ball with the radius
$r\in(0,\operatorname{diam} X)$.
Then there exists $c_0\in(0,1)$, depending only on $p$, $\gamma$,
$A$, and the geometric constants of $X$, such that
\begin{align}\label{eq:upper-scale-coarse}
 &G^\phi(u)+G^\phi(v)\leq c_0\phi(B(u,R))^\gamma\ \Longrightarrow\
 |f(u)-f(v)|\leq c_0^{-1}R\phi(B(u,R))^\gamma
\end{align}
for any $u,v\in AB$ and
$R\in[\rho(u,v),Ar]\cap(0,\infty)$, where $G^\phi$ is defined as in
\eqref{eq:G} with $\lambda=1$ and $E:=(2A)B$.
\end{lemma}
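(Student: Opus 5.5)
We argue by contradiction through a measure-counting argument on an intermediate set of points $z$. Fix $u,v\in AB$ and $R\in[\rho(u,v),Ar]$, and write $\Phi:=\phi(B(u,R))$. We first record the comparability facts needed. For $z$ in the set
\[
 S:=B(u,2R)\setminus B(u,R/M),
\]
with $M$ a large constant, we have $\rho(u,z)\sim R$ and $\rho(v,z)\lesssim R$. By doubling, $V(u,z)\sim\mu(B(u,R))\sim V(v,z)$ whenever also $\rho(v,z)\gtrsim R$, and then $\phi(u,z)\sim\phi(v,z)\sim\Phi$. To guarantee $\rho(v,z)\gtrsim R$, we shrink $S$ to
\[
 S':=S\setminus B(v,R/M).
\]
Reverse doubling (valid since $R\lesssim Ar\lesssim \operatorname{diam}X$, via the extended form \eqref{eq:rd-d-ex}) shows that $\mu(S')\gtrsim\mu(B(u,R))$ for $M$ large. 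We also need $S'\subset E=(2A)B$. This holds if we take $S\subset B(u,R)\setminus B(u,R/M)$ instead of radius $2R$: then $\rho(u,z)<R\le Ar$, and $u\in AB$ gives $z\in 2AB$. So we work with $S'=B(u,R)\setminus(B(u,R/M)\cup B(v,R/M))$, which still has measure $\gtrsim\mu(B(u,R))$ by reverse doubling applied at scale $R/M$ versus $R$.

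For $z\in S'$ all three of $\rho(u,z)$, $\rho(v,z)$, and $R$ are comparable, with constants depending on $M$. Hence $\phi(u,z)^\gamma\sim\phi(v,z)^\gamma\sim\Phi^\gamma$, and $\phi(x,z)^{\gamma p}/V(x,z)\sim\Phi^{\gamma p}/\mu(B(u,R))$ for $x\in\{u,v\}$. This works for either sign of $\gamma$ and either choice of $\phi$, by doubling.

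Now assume $G^\phi(u)+G^\phi(v)\le c_0\Phi^\gamma$. Let $T_x:=\{z\in S':|f(x)-f(z)|>\rho(x,z)\phi(x,z)^\gamma\}$ for $x\in\{u,v\}$. From \eqref{eq:G} with $\lambda=1$,
\[
 c_0^p\Phi^{\gamma p}\ge G^\phi(x)^p\ge\int_{T_x}\frac{\phi(x,z)^{\gamma p}}{V(x,z)}\,d\mu(z)\gtrsim\frac{\Phi^{\gamma p}\mu(T_x)}{\mu(B(u,R))}.
\]
Therefore $\mu(T_u)+\mu(T_v)\lesssim c_0^p\mu(B(u,R))$. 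Choosing $c_0$ small relative to the lower bound $\mu(S')\gtrsim\mu(B(u,R))$, we get $S'\setminus(T_u\cup T_v)\neq\emptyset$. Pick $z$ in this set. Then
\[
 |f(u)-f(v)|\le\rho(u,z)\phi(u,z)^\gamma+\rho(v,z)\phi(v,z)^\gamma\lesssim R\Phi^\gamma,
\]
which is at most $c_0^{-1}R\Phi^\gamma$ after possibly decreasing $c_0$ further. Both constraints on $c_0$ are of the form "$c_0$ small", so they are compatible.

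The main point needing care is the lower bound on $\mu(S')$. Removing the two small balls requires $V(u,R/M)$ and $V(v,R/M)$ to be a small fraction of $V(u,R)$. The first follows from \eqref{eq:rd-d-ex}. The second follows because $B(v,R/M)\subset B(v,2R)$ and $V(v,2R)\sim V(u,R)$, so reverse doubling centered at $v$ (for $R/M<\operatorname{diam}X/3$, which holds for $M$ large since $R\le Ar\lesssim\operatorname{diam}X$) gives $V(v,R/M)\lesssim M^{-d}V(u,R)$. The constants depend only on $A$, $p$, $\gamma$, and the geometric constants, as required.
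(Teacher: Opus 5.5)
Your proof is correct and follows essentially the same route as the paper. You find a set of measure $\gtrsim\mu(B(u,R))$ inside $(2A)B$ on which $\rho(\cdot,z)$ and $V(\cdot,z)$ are comparable to $R$ and $V(u,R)$ at both $u$ and $v$, and then count measure against $G^\phi$; the paper instead takes a small ball from Lemma~\ref{lem:witness-ball} at radius $R_0=\min\{R,\operatorname{diam}X\}$ and argues by contradiction, whereas you use $B(u,R)\setminus(B(u,R/M)\cup B(v,R/M))$ with the extended reverse doubling and pick a good intermediate point directly, which is only a cosmetic difference.
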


The proof of Lemma \ref{lem:spatial-coarse} uses the following elementary geometric estimate.

\begin{lemma}\label{lem:witness-ball}
Let $(X,\rho,\mu)$ be a doubling and reverse doubling metric measure space.
Then, for any $u,v\in X$ and $R\in(0,\infty)$ satisfying
$\rho(u,v)\leq R\leq\operatorname{diam} X$, there exists a ball
$S\subset B(u,\frac R2)$ such that, for any $z\in S$,
\begin{equation}\label{eq:witness-distance}
 \rho(u,z)\sim R,
 \quad
 \rho(v,z)\sim R,
\end{equation}
and
\begin{equation}\label{eq:witness-measure}
 \mu(S)\gtrsim\mu(B(u,R)),
\end{equation}
where the implicit positive constants depend only on the geometric constants of $X$.
\end{lemma}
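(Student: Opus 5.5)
The plan is to find a point $w$ at distance comparable to $R$ from both $u$ and $v$, and take $S:=B(w,\epsilon R)$ for a small fixed $\epsilon$. The doubling condition then gives the measure bound, and the triangle inequality gives the distance bounds. The only nontrivial input is the existence of a suitable $w$. For this we use connectedness, or equivalently the reverse doubling property, which forbids the annulus around $u$ from being empty.

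\emph{Step 1: choosing $w$.} Fix a small constant $\kappa\in(0,\frac18]$, to be chosen depending only on geometric constants. We look for $w$ with $\rho(u,w)\in[\kappa R,\frac R4)$ and $\rho(v,w)\ge\kappa R$.

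First consider the case $\rho(u,v)\ge 2\kappa R$. Here we use that the annulus $B(u,\frac R4)\setminus B(u,\kappa R)$ has positive measure, which we explain below. Any $w$ in this annulus satisfies $\rho(u,w)<\frac R4$, so $\rho(v,w)\ge\rho(u,v)-\rho(u,w)$. This only gives a lower bound when $\rho(u,v)$ exceeds $\frac R4$, so the choice needs more care. Instead, split the annulus $B(u,\frac R4)\setminus B(u,\kappa R)$ into the part inside $B(v,\kappa R)$ and the part outside. The part inside lies in $B(v,\kappa R)$, whose measure is at most $C\kappa^{d'}\mu(B(u,R))$ by the upper-dimension growth \eqref{eq:volume-upper-growth} applied at $v$, together with doubling $V(v,R)\sim V(u,R)$. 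The annulus itself has measure at least $\mu(B(u,\frac R4))-\mu(B(u,\kappa R))$. By reverse doubling \eqref{eq:volume-reverse-growth} this is $\ge \frac12\mu(B(u,\frac R4))$ once $\kappa$ is small. (Take care when $R$ is near $\operatorname{diam}X$: the growth \eqref{eq:volume-reverse-growth} is stated for $r<\operatorname{diam}X/3$ with $\lambda r<\operatorname{diam}X/3$, and $\frac R4\le\frac{\operatorname{diam}X}{4}$ is fine after shrinking to $\frac R{12}$ if needed.) Since $\kappa^{D}$ is small, the outside part has positive measure, indeed $\gtrsim\mu(B(u,R))$, so such a $w$ exists.

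The case $\rho(u,v)<2\kappa R$ is simpler: $B(v,\kappa R)\subset B(u,3\kappa R)$, and the same counting works with the annulus $B(u,\frac R4)\setminus B(u,3\kappa R)$. In both cases we obtain $w$ with $\kappa R\le\rho(u,w)<\frac R4$ and $\rho(v,w)\ge\kappa R$.

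\emph{Step 2: the ball.} Set $S:=B(w,\frac\kappa2R)$. Then $S\subset B(u,\frac R4+\frac\kappa2R)\subset B(u,\frac R2)$. For $z\in S$ we have $\rho(u,z)\in[\frac\kappa2R,\frac R2]$ and $\rho(v,z)\ge\frac\kappa2 R$. We also have $\rho(v,z)\le\rho(v,u)+\rho(u,z)\le\frac32R$. This gives \eqref{eq:witness-distance}. Finally $B(u,R)\subset B(w,2R)$, so iterated doubling yields $\mu(B(u,R))\le C_{\mathrm u}(4/\kappa)^D\mu(S)$, which is \eqref{eq:witness-measure}.

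The main obstacle is Step 1 near the top scale $R\approx\operatorname{diam}X$, where reverse doubling is not available in the stated form. There I would rescale all radii by a fixed factor, say work with $R/12$ in place of $R$. If that still fails, I would use connectedness of $X$, which holds since $X$ is a Poincar\'e space, to find a point at each distance up to the diameter, and rely on the doubling condition alone.
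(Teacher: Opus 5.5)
Your construction is essentially the paper's. You pick a point of $B(u,\frac R4)$ outside $B(u,\kappa R)\cup B(v,\kappa R)$ by a measure count, take a ball of radius comparable to $\kappa R$ around it, and obtain \eqref{eq:witness-measure} from doubling. The paper does this in one step, without your case split, using
$\mu(B(u,\varepsilon R)\cup B(v,\varepsilon R))\lesssim\varepsilon^d[V(u,\frac R4)+V(v,\frac R4)]\lesssim\varepsilon^d\mu(B(u,\frac R4))$.

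One justification is wrong as written. The bound on $\mu(B(v,\kappa R))$ in your first case cannot come from the upper-dimension estimate \eqref{eq:volume-upper-growth}. That estimate goes the other way: it only gives the lower bound $V(v,\kappa R)\geq C_{\mathrm u}^{-1}\kappa^{D}V(v,R)$. So your later appeal to ``$\kappa^D$ is small'' is unsupported. What you need is the reverse doubling estimate \eqref{eq:volume-reverse-growth} at $v$, which gives $V(v,\kappa R)\lesssim\kappa^{d}V(v,\frac R4)$. Then doubling gives $V(v,\frac R4)\leq V(u,\frac{5R}{4})\lesssim V(u,\frac R4)$, since $\rho(u,v)\leq R$. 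This is the same tool you already use at $u$, so the fix is immediate.

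The top-scale obstacle you worry about does not exist. Since $R\leq\operatorname{diam}X$, we have $\frac R4<\frac{\operatorname{diam}X}{3}$. Hence \eqref{eq:volume-reverse-growth} applies directly with $r=\kappa R$ and $\lambda=\frac1{4\kappa}$, and no rescaling to $\frac R{12}$ is needed. Your fallback via connectedness would also use a hypothesis the lemma does not make: it assumes only doubling and reverse doubling, not a Poincar\'e inequality. With the corrected citation, your Steps 1 and 2 give a complete proof.
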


\begin{proof}
Fix $u,v\in X$ and $R\in(0,\infty)$ with
$\rho(u,v)\leq R\leq\operatorname{diam} X$, and let $U:=B(u,\frac R4)$.
By \eqref{eq:volume-reverse-growth} and the doubling condition,
we obtain, for any $\varepsilon\in(0,\frac18)$,
\[
 \mu\left(B(u,\varepsilon R)\cup B(v,\varepsilon R)\right)
 \lesssim\varepsilon^d
 \left[V(u,\tfrac R4)+V(v,\tfrac R4)\right]
 \lesssim\varepsilon^d\mu(U),
\]
where the implicit positive constants depend only on the geometric
constants of $X$.
For $\varepsilon$ sufficiently small, depending only on these
constants, the preceding estimate gives a point
\[
 z_0\in U\setminus
 \left[B(u,\varepsilon R)\cup B(v,\varepsilon R)\right].
\]
Let $S:=B(z_0,\frac{\varepsilon R}{4})$.
By the triangle inequality, we obtain $S\subset B(u,\frac R2)$
and \eqref{eq:witness-distance}.
Since $B(u,R)\subset B(z_0,\frac{5R}{4})$, it follows from
the doubling condition that $\mu(B(u,R))\lesssim\mu(S)$,
which shows \eqref{eq:witness-measure} and completes the proof of
Lemma~\ref{lem:witness-ball}.
\end{proof}

\begin{proof}[Proof of Lemma~\ref{lem:spatial-coarse}]
Fix $u,v\in AB$ and $R\in[\rho(u,v),Ar]\cap(0,\infty)$.
Let $R_0:=\min\{R,\operatorname{diam} X\}$.
Note that $\rho(u,v)\leq R_0$.
Since $\frac{R}{A}\leq r<\operatorname{diam} X$ and $A\in[1,\infty)$,
it follows that $\frac{R}{A}\leq R_0\leq R$.
Applying Lemma~\ref{lem:witness-ball} with radius $R_0$, we obtain
a ball $S\subset B(u,\frac{R_0}{2})\subset E$.
From \eqref{eq:witness-distance}, \eqref{eq:witness-measure}, and
the doubling condition, we infer that $\mu(S)\gtrsim V(u,R)$,
$\rho(u,z)\sim\rho(v,z)\sim R$, and
$V(u,z)\sim V(v,z)\sim V(u,R)$ for any $z\in S$. Thus,
\begin{equation}\label{eq:spatial-phi-comparison}
 \phi(u,z)\sim\phi(v,z)\sim\phi(B(u,R)).
\end{equation}
Let $c_0\in(0,1)$ be sufficiently small, depending only on $p$,
$\gamma$, $A$, and the geometric constants of $X$. Using these comparisons,
we find that, for any $w\in\{u,v\}$ and $z\in S$,
\begin{equation}\label{eq:spatial-threshold}
 2\rho(w,z)\phi(w,z)^\gamma
 \leq c_0^{-1}R\phi(B(u,R))^\gamma.
\end{equation}
Suppose that $G^\phi(u)+G^\phi(v)\leq c_0\phi(B(u,R))^\gamma$
but $|f(u)-f(v)|>c_0^{-1}R\phi(B(u,R))^\gamma$.
For $w\in\{u,v\}$, let
\[
 S_w:=\{z\in S:|f(w)-f(z)|>
 \rho(w,z)\phi(w,z)^\gamma\}.
\]
By the triangle inequality, \eqref{eq:spatial-threshold}, and
the assumed lower bound for $|f(u)-f(v)|$, we obtain $S=S_u\cup S_v$.
From \eqref{eq:G} with $\lambda=1$, \eqref{eq:spatial-phi-comparison},
\eqref{eq:witness-measure}, and the doubling condition, we deduce that
\[
 [G^\phi(u)]^p+[G^\phi(v)]^p
 \gtrsim\frac{\phi(B(u,R))^{\gamma p}}{V(u,R)}
 [\mu(S_u)+\mu(S_v)]
 \gtrsim\phi(B(u,R))^{\gamma p}.
\]
Thus, $G^\phi(u)+G^\phi(v)\gtrsim\phi(B(u,R))^\gamma$, where the
implicit positive constant depends only on $p$, $\gamma$, $A$,
and the geometric constants of $X$. Since $c_0$ was chosen sufficiently small,
the last estimate contradicts the assumption in
\eqref{eq:upper-scale-coarse}.
This completes the proof of Lemma~\ref{lem:spatial-coarse}.
\end{proof}

\section{The Curve Method}\label{sec:metric-proofs}

In Subsection~\ref{subsec:curve-poincare}, we combine the estimates
from Section~\ref{sec:one-dimensional} to obtain truncated difference
estimates and Poincar\'e inequalities.
We then derive asymptotic lower bounds by discrete
convolutions in Subsection~\ref{subsec:curve-asymptotic}.

Throughout the proofs in Section~\ref{sec:metric-proofs},
we work with a finite-valued Borel
representative of $f$. Such a representative exists because $f$ is
finite almost everywhere and measurable with respect to the completion
of the Borel measure $\mu$.
With this choice, the densities $G^\phi$ in \eqref{eq:G} are
Borel measurable. Thus, for any Lipschitz curve $\eta$, the functions
$f\circ\eta$ and $G^\phi\circ\eta$ are Lebesgue measurable on its
parameter interval and hence satisfy the measurability assumptions in
Proposition~\ref{prop:volume-one-dimensional} and
Corollary~\ref{prop:one-dimensional-endpoint}.

\subsection{Poincar\'e Inequalities}
\label{subsec:curve-poincare}

In this subsection, we use Lemma~\ref{lem:riesz-curve} to pass
from the one-dimensional estimates in
Subsection~\ref{subsec:one-dimensional-endpoints} to truncated
difference estimates on balls. We then deduce Poincar\'e
inequalities for negative parameters.

\begin{lemma}\label{lem:curve-local-estimates}
Let $p\in[1,\infty)$, and let $(X,\rho,\mu)$ be a complete
doubling $q$-Poincar\'e space for some $q\in[1,p]$
with a lower dimension $d$.
Let $\phi\in\{\rho,V\}$ and $\gamma\in\mathbb R\setminus\{0\}$.
Then there exist constants $c\in(0,\infty)$ and $\Lambda\in[1,\infty)$
such that, for any ball $B$ with its radius $r\in(0,\operatorname{diam} X)$
and for any measurable function $f:X\to\mathbb R$,
the following statements hold.
\begin{enumerate}[{\rm (i)}]
\item If $\phi=\rho$ and $\gamma\in(-\infty,-\frac qp]$ or
$\phi=V$ and $\gamma\in(-\infty,-\frac q{dp}]$, then
\begin{equation}\label{eq:curve-negative-local}
 \fint_B\fint_B
 \left(|f(x)-f(y)|-cr\phi(B)^\gamma\right)_+^p
 \,d\mu(x)\,d\mu(y)
 \lesssim\frac{r^p}{\mu(B)}
 \mathcal E^\phi_{1,\gamma,p}(f;\Lambda B).
\end{equation}
\item If $\gamma\in(0,\infty)$, then
\begin{equation}\label{eq:curve-positive-local}
 \fint_B\fint_B
 \min\left\{|f(x)-f(y)|,cr\phi(B)^\gamma\right\}^p
 \,d\mu(x)\,d\mu(y)
 \lesssim\frac{r^p}{\mu(B)}
 \mathcal E^\phi_{1,\gamma,p}(f;\Lambda B).
\end{equation}
\end{enumerate}
The constants $c$, $\Lambda$, and the implicit positive constants
depend only on $p$, $\gamma$, and the geometric constants of $X$.
\end{lemma}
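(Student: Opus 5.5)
The plan is to transfer the one-dimensional estimates of Proposition~\ref{prop:volume-one-dimensional} (for $\phi=V$) and Corollary~\ref{prop:one-dimensional-endpoint} (for $\phi=\rho$) to pairs of points of $B$ along the curves given by Lemma~\ref{lem:riesz-curve}, and then to average over $x,y\in B$ with the help of \eqref{eq:riesz-average}. We work with $\lambda=1$ throughout.

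\textbf{Step 1: the density and the coarse hypothesis.} Let $L$ be the constant from Lemma~\ref{lem:riesz-curve}. Choose $A\ge 1$ large relative to $L$ (say $A=4L+4$), and let $G^\phi$ be the density defined in \eqref{eq:G} with $\lambda=1$ and $E=(2A)B$. Set $\Lambda:=2A$ and $h:=c_0^{-1}G^\phi$, where $c_0$ comes from Lemma~\ref{lem:spatial-coarse}.

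Fix distinct $x,y\in B$. Let $F:=(G^\phi)^q$, or a suitable multiple of it, and let $\eta:[0,\ell]\to X$ be the curve of Lemma~\ref{lem:riesz-curve} for this choice of $F$, so that $\ell\le L\rho(x,y)\le 2Lr$. Every point $\eta(u)$ lies in $B(x,\ell)\subset AB$. For $u,w\in[0,\ell]$ and $R\in[|u-w|,4\ell]$ we have $\rho(\eta(u),\eta(w))\le|u-w|\le R\le Ar$.

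Lemma~\ref{lem:spatial-coarse} then gives the following implication. If $h(\eta(u))+h(\eta(w))\le \phi(B(\eta(u),R))^\gamma$, then $|f(\eta(u))-f(\eta(w))|\le c_0^{-1}R\phi(B(\eta(u),R))^\gamma$. After rescaling $f$ by $c_0$, this is exactly hypothesis \eqref{eq:volume-one-dimensional-coarse} when $\phi=V$, and \eqref{eq:coarse-1d} when $\phi=\rho$, applied to $v=c_0 f\circ\eta$ and $h\circ\eta$.

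One technical point is that when $R<\rho(\eta(u),\eta(w))$ fails to be excluded, Lemma~\ref{lem:spatial-coarse} still requires $R\ge\rho(\eta(u),\eta(w))$. This holds because $\eta$ is $1$-Lipschitz.

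\textbf{Step 2: apply the one-dimensional estimate with $\sigma=p/q\ge1$.} This choice of $\sigma$ is the key decision. For $\phi=V$, the condition $\gamma\le -\frac{q}{dp}=-\frac1{d\sigma}$ is precisely the range required in Proposition~\ref{prop:volume-one-dimensional}(i). For $\phi=\rho$, we apply Corollary~\ref{prop:one-dimensional-endpoint} with exponent $p/q$ in place of $p$, which is valid because $\gamma\le-\frac qp$; its proof goes through Proposition~\ref{prop:volume-one-dimensional} with $\sigma=p/q$.

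This yields
\[
\big(|f(x)-f(y)|-c\,\ell\,\phi(\cdot)^\gamma\big)_+\lesssim \ell\Big[h(x)^{p/q}+h(y)^{p/q}+\fint_0^\ell h(\eta)^{p/q}\Big]^{q/p}.
\]
The truncation term $\ell V(\eta(0),\ell)^\gamma$ (respectively $\ell^{1+\gamma}$) is bounded by a constant times $r\phi(B)^\gamma$. The case $\gamma<0$ causes trouble here: if $\ell\ll r$, then $\ell^{1+\gamma}$ could exceed $r^{1+\gamma}$ when $\gamma<-1$. To handle this, we replace $A=\ell$ by an interval of length comparable to $r$, extending $\eta$ by constants as in \eqref{eq:proj}. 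This is permissible since Proposition~\ref{prop:volume-one-dimensional} only needs a $1$-Lipschitz $\eta$ defined on $[0,A]$. The choice makes the threshold $\lesssim r\phi(B)^\gamma$ while keeping the right-hand side comparable. I expect this scale bookkeeping, together with arranging the constants so that the final $c$ is uniform, to be the main obstacle.

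\textbf{Step 3: curve integral and averaging.} Raise the estimate to the power $p$. Use H\"older's inequality in the form $\big(\fint h^{p/q}\big)^{q/p}\cdot\ell\le \ell^{1-q/p}\big(\int_\eta h^{p/q}\big)^{q/p}$. Then choose $F=h^{p/q}$ in Lemma~\ref{lem:riesz-curve} with exponent $1$. Alternatively, use the $q$-version with $F=h^{p/q}$ raised to the power $p/q$.

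The cleanest route is to note that a $q$-Poincar\'e space is also a $\sigma q$-type space, and to apply \eqref{eq:riesz-curve} to $F=h^{p/q}$ with exponent $q$. This gives
\[
\Big(\int_\eta h^{p/q}\Big)^{q}\lesssim\rho^{q-1}\int h^{p}\,[K(x,z)+K(y,z)]\,d\mu(z).
\]
Hence the $p$-th power of the right-hand side in Step 2 is bounded by $r^{p-1}\int_{\Lambda B}h^p[K(x,\cdot)+K(y,\cdot)]\,d\mu$, plus the endpoint terms $r^p(h(x)^p+h(y)^p)$.

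Averaging over $x,y\in B$ and using \eqref{eq:riesz-average} together with \eqref{eq:G-mass}, both contributions are bounded by $\frac{r^p}{\mu(B)}\int_{\Lambda B}h^p\lesssim\frac{r^p}{\mu(B)}\mathcal E^\phi_{1,\gamma,p}(f;\Lambda B)$. This proves (i).

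Part (ii) follows by the identical argument, using part (ii) of Proposition~\ref{prop:volume-one-dimensional} or Corollary~\ref{prop:one-dimensional-endpoint} and the monotonicity of $\min\{\cdot,a\}$ in $a$. No restriction on $\gamma$ beyond $\gamma>0$ is needed there.
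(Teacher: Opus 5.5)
Your proposal is correct and follows the paper's proof essentially step for step: the curve from Lemma~\ref{lem:riesz-curve} with $F=(G^\phi)^{p/q}$, the coarse hypothesis from Lemma~\ref{lem:spatial-coarse}, constant extension of $\eta$ to an interval of length $T=2Lr$, the one-dimensional estimate with $\sigma=p/q$, and averaging via \eqref{eq:riesz-average} and \eqref{eq:G-mass}. Only bookkeeping needs tidying:
\begin{itemize}
\item commit to $F=(G^\phi)^{p/q}$ from the start, not $(G^\phi)^q$, because the curve depends on $F$;
\item take $A\ge 8L$ rather than $4L+4$, so that $R\le 4T\le Ar$ (the paper uses $A=8L$ and $\Lambda=16L$);
\item note that the constant tail of the extended curve contributes $T\,G^\phi(y)^{p/q}$ to $\int_0^T h^{p/q}$, which is absorbed into the endpoint term;
\item work with a Borel representative of $f$, so that $G^\phi$ is an admissible Borel integrand in Lemma~\ref{lem:riesz-curve}.
\end{itemize}
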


\begin{proof}
Fix $B$ with its radius $r$, and let $L$ be as in
Lemma~\ref{lem:riesz-curve}. Let $\sigma:=\frac{p}{q}$,
$T:=2Lr$, and $\Lambda:=16L$.
We use $G^\phi$ defined as in \eqref{eq:G} with $\lambda=1$ and
$E=\Lambda B$, and may assume that
$\mathcal E^\phi_{1,\gamma,p}(f;\Lambda B)<\infty$.
Fix any distinct $x,y\in B$.
Applying Lemma~\ref{lem:riesz-curve} with such $x,y$ and
$F=(G^\phi)^\sigma$, we obtain an
arclength-parametrized curve $\eta:[0,\ell_\eta]\to X$ joining
$x$ to $y$, with $\ell_\eta\leq L\rho(x,y)\leq T$, which
implies that $\eta([0,\ell_\eta])\subset(8L)B$.
Since $\sigma q=p$ and $B(x,L\rho(x,y))\subset\Lambda B$,
it follows from \eqref{eq:riesz-curve} that
\begin{equation}\label{eq:curve-cost}
 \left[\int_\eta(G^\phi)^\sigma\,d s\right]^q
 \lesssim r^{q-1}\int_{\Lambda B}[G^\phi(z)]^p
 \left[K(x,z)+K(y,z)\right]\,d\mu(z).
\end{equation}
The constant extension of $\eta$ to $[0,T]$ given by
\eqref{eq:proj} with $A=\ell_\eta$ is $1$-Lipschitz and is
still denoted by $\eta$.
For $v:=f\circ\eta$ and $h:=G^\phi\circ\eta$, we have
\[
 \int_0^Th(s)^\sigma\,d s
 \leq\int_\eta(G^\phi)^\sigma\,d s+T[G^\phi(y)]^\sigma.
\]
Since $\eta([0,T])\subset(8L)B$ and $4T=8Lr$, it follows from
\eqref{eq:upper-scale-coarse} with $A=8L$ that
$c_0v$ and $\frac{h}{c_0}$ satisfy
\eqref{eq:coarse-1d} when $\phi=\rho$ and
\eqref{eq:volume-one-dimensional-coarse} when $\phi=V$.
We apply \eqref{eq:endpoint-chain} or \eqref{eq:endpoint-conclusion}
with exponent $\sigma$ when $\phi=\rho$, and \eqref{eq:v-p-conclusion}
or \eqref{eq:volume-positive-conclusion} with $C_\ast=2L$ when
$\phi=V$, to $c_0v$ and $\frac{h}{c_0}$ in place of $v$ and $h$,
respectively.
Note that $T<2L\operatorname{diam} X$.
From these, we deduce that
\begin{align}\label{eq:curve-endpoint-bound}
 &\begin{cases}
 \left(|v(0)-v(T)|-c_1T\phi(B(x,T))^\gamma\right)_+
 &\text{if }\gamma\in(-\infty,0),\\
 \min\left\{|v(0)-v(T)|,c_1T\phi(B(x,T))^\gamma\right\}
 &\text{if }\gamma\in(0,\infty)
 \end{cases}\notag\\
 &\quad\lesssim T\left[h(0)^\sigma+h(T)^\sigma+
 \fint_0^Th(s)^\sigma\,d s\right]^{\frac1\sigma},
\end{align}
where $c_1\in(0,\infty)$ depends only on $p$, $\gamma$, and the
geometric constants of $X$.
Since $T\sim r$ and $V(x,T)\sim\mu(B)$, it follows that
$T\phi(B(x,T))^\gamma\sim r\phi(B)^\gamma$.
By taking the $p$-th power in \eqref{eq:curve-endpoint-bound} and using
$v(0)=f(x)$, $v(T)=f(y)$, and \eqref{eq:curve-cost}, we obtain
\begin{align}\label{eq:pointwise-lift}
 &\begin{cases}
 \left(|f(x)-f(y)|-cr\phi(B)^\gamma\right)_+^p
 &\text{if }\gamma\in(-\infty,0),\\
 \min\left\{|f(x)-f(y)|,cr\phi(B)^\gamma\right\}^p
 &\text{if }\gamma\in(0,\infty)
 \end{cases}\notag\\
 &\quad\lesssim r^p\left\{[G^\phi(x)]^p+[G^\phi(y)]^p\right\}
 +r^{p-1}\int_{\Lambda B}[G^\phi(z)]^p
 \left[K(x,z)+K(y,z)\right]\,d\mu(z)
\end{align}
for a suitable constant $c\in(0,\infty)$ depending only on $p$,
$\gamma$, and the geometric constants of $X$.
By averaging \eqref{eq:pointwise-lift} over $B\times B$ and applying
\eqref{eq:riesz-average} and \eqref{eq:G-mass}, we conclude that
\eqref{eq:curve-negative-local} and \eqref{eq:curve-positive-local} hold.
This completes the proof of Lemma~\ref{lem:curve-local-estimates}.
\end{proof}

Using Lemma~\ref{lem:curve-local-estimates}(i) with $f$ replaced by $\frac f\lambda$
and using $s\leq a+(s-a)_+$ for $s,a\in[0,\infty)$, we obtain
\[
 \fint_B\fint_B|f(x)-f(y)|^p\,d\mu(x)\,d\mu(y)
 \lesssim\lambda^pr_B^p\phi(B)^{\gamma p}
 +\frac{r_B^p}{\mu(B)}\mathcal E^\phi_{\lambda,\gamma,p}(f;\Lambda B).
\]
Thus, \eqref{eq:NPI} holds in the parameter ranges of
Lemma~\ref{lem:curve-local-estimates}(i).

\subsection{Asymptotic Lower Bounds}
\label{subsec:curve-asymptotic}

In this subsection, we use discrete convolutions to derive asymptotic lower bounds from the Poincar\'e inequality
\eqref{eq:NPI} for negative parameters and from
Lemma~\ref{lem:curve-local-estimates}(ii) for positive parameters.

\begin{proposition}\label{prop:curve-asymptotic}
Let $p\in[1,\infty)$, and let $(X,\rho,\mu)$ be a complete doubling
$q$-Poincar\'e space for some $q\in[1,p]$ with a lower dimension $d$.
Let $\phi\in\{\rho,V\}$ and $\gamma\in\mathbb R\setminus\{0\}$.
Then \eqref{eq:asymptotic-lower} holds for any
$f\in L^1_{\mathrm{loc}}(\mu)$ in either of the following cases:
\begin{enumerate}[{\rm (i)}]
\item $\gamma\in(-\infty,0)$ and \eqref{eq:NPI} holds with fixed constants
$C\in(0,\infty)$ and $\Lambda\in[1,\infty)$,
\item $\gamma\in(0,\infty)$.
\end{enumerate}
The positive constant in \eqref{eq:asymptotic-lower} depends, in the case (i),
only on $p$, $\gamma$, $C$, $\Lambda$, and the geometric constants of $X$
and, in the case (ii), only on $p$, $\gamma$, and the geometric constants of $X$.
\end{proposition}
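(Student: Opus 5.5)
Fix a sequence $\lambda_j\to L_\gamma$ with $\mathcal E^\phi_{\lambda_j,\gamma,p}(f;X)\to M:=\liminf_{\lambda\to L_\gamma}\mathcal E^\phi_{\lambda,\gamma,p}(f;X)<\infty$. For each scale $\varepsilon>0$ I will use a discrete convolution $f_\varepsilon:=\sum_i \varphi_i f_{B_i}$, built from a maximal $\varepsilon$-separated net $\{x_i\}$, balls $B_i:=B(x_i,\varepsilon)$ with bounded overlap of the dilates $\Lambda' B_i$, and a $C/\varepsilon$-Lipschitz partition of unity $\{\varphi_i\}$ subordinate to $\{2B_i\}$. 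The standard estimate gives, for $x\in B_i$,
\[
 \operatorname{lip} f_\varepsilon(x)+\operatorname{Lip}(f_\varepsilon|_{B_i})
 \lesssim\frac1\varepsilon\sum_{j:\,2B_j\cap 2B_i\neq\emptyset}|f_{B_j}-f_{B_i}|
 \lesssim\frac1\varepsilon\Bigl(\fint_{5B_i}\fint_{5B_i}|f(x)-f(y)|^p\,d\mu(x)\,d\mu(y)\Bigr)^{\frac1p}.
\]
The proof will have three steps. First I bound these mean oscillations at scale $\varepsilon$ by localized energies. Then I sum over $i$, using bounded overlap and $\mathcal E^\phi(f;\bigcup\Lambda B_i)\lesssim\mathcal E^\phi(f;X)$, which holds because bounded overlap controls the pairs $(x,y)$ counted in several $\Lambda B_i\times\Lambda B_i$. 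Finally I let $\varepsilon\to0$.

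In case (i) I apply \eqref{eq:NPI} on $5B_i$ with $\lambda=\lambda_j$:
\[
 \fint_{5B_i}\fint_{5B_i}|f(x)-f(y)|^p
 \lesssim\lambda_j^p\varepsilon^p\phi(5B_i)^{\gamma p}+\frac{\varepsilon^p}{\mu(B_i)}\mathcal E^\phi_{\lambda_j,\gamma,p}(f;5\Lambda B_i).
\]
Multiplying by $\varepsilon^{-p}\mu(B_i)$ and summing over $i$ gives
\[
 \int_{X}(\operatorname{Lip}_{\rm loc} f_\varepsilon)^p\,d\mu\lesssim\lambda_j^p\sum_i\mu(B_i)\phi(B_i)^{\gamma p}+\mathcal E^\phi_{\lambda_j,\gamma,p}(f;X).
\]
For fixed $\varepsilon$ I first let $j\to\infty$, so that $\lambda_j\to0$. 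The first term vanishes provided the sum is finite. When $X$ is unbounded it is not, so I work on a ball $B(x_0,R)$: only finitely many $i$ occur there, and I let $R\to\infty$ at the end. This yields $\|\operatorname{Lip}_{\rm loc} f_\varepsilon\|_{L^p(B(x_0,R))}^p\lesssim M$ uniformly in $\varepsilon$ and $R$.

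In case (ii) I instead use Lemma~\ref{lem:curve-local-estimates}(ii) with $f/\lambda_j$. This gives
\[
 \fint\fint\min\{|f(x)-f(y)|,c\lambda_j\varepsilon\phi(B_i)^\gamma\}^p\lesssim\frac{\varepsilon^p}{\mu(B_i)}\mathcal E^\phi_{\lambda_j,\gamma,p}.
\]
Because $f\in L^1_{\rm loc}$ is a.e.\ finite, I handle the truncation by applying the argument to the truncations $f_N:=\max\{-N,\min\{f,N\}\}$, whose energy is no larger, as $|f_N(x)-f_N(y)|\le|f(x)-f(y)|$ makes the superlevel sets shrink. Once $\lambda_j\varepsilon\phi(B_i)^\gamma\ge 2N/c$, the minimum equals $|f_N(x)-f_N(y)|$. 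On a bounded region, with $\varepsilon$ fixed, this happens for all large $j$ because $\lambda_j\to\infty$ and $\gamma>0$. So the same Lipschitz bound holds for $(f_N)_\varepsilon$, and letting $N\to\infty$ at the end is harmless by lower semicontinuity.

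To conclude, note that $f_\varepsilon\to f$ in $L^1_{\rm loc}$ as $\varepsilon\to0$, by Lebesgue differentiation on doubling spaces. For $p=1$, definition \eqref{eq:variation-definition} with the functions $f_\varepsilon$ gives $|Df|(X)\lesssim M$. For $p>1$, a uniform $L^p$ bound on upper gradients $g_\varepsilon\ge\operatorname{Lip}_{\rm loc}f_\varepsilon$, together with the Poincaré inequality, gives Newtonian/Sobolev control. Since $X$ is a $q$-Poincaré space with $q\le p$, I need Hajłasz control. I get it directly: the pointwise estimate $|f_\varepsilon(x)-f_\varepsilon(y)|\lesssim\rho(x,y)\,[\mathcal M_{p'}g_\varepsilon(x)+\mathcal M_{p'}g_\varepsilon(y)]$, with $p'<p$ chosen so that the maximal-function bound holds and Keith–Zhong self-improvement gives a $p'$-Poincaré inequality when $q=p$, makes $\mathcal M_{p'}g_\varepsilon$ a Hajłasz gradient with $L^p$ norm $\lesssim M^{1/p}$. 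Weak compactness in $L^p$ together with the closedness of Hajłasz pairs under a.e.\ convergence then gives $\|f\|_{\dot M^{1,p}}^p\lesssim M$.

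The main obstacle is this last passage from the Lipschitz approximants to the Hajłasz norm, especially the endpoint $q=p$, where self-improvement of the Poincaré inequality is required. A secondary, bookkeeping difficulty is controlling the overlap of the dilated balls so that the energies sum to at most a constant multiple of the global energy.
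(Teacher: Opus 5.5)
Your proposal is correct and follows essentially the paper's route: discrete convolutions whose Lipschitz constants are controlled by local double oscillations, then \eqref{eq:NPI} in case (i) or Lemma~\ref{lem:curve-local-estimates}(ii) in case (ii) on each small ball, then bounded overlap to sum the localized energies, and finally a self-improved Poincar\'e inequality with maximal functions to reach the Haj\l asz or BV bound. The differences are only bookkeeping: you localize to $B(x_0,R)$ where the paper takes $\liminf_j$ ball by ball and applies Fatou's lemma to the series; your truncation $f_N$ in case (ii) is unnecessary, since $\min\{|f(x)-f(y)|,c\lambda_j r\phi(B)^\gamma\}^p\uparrow|f(x)-f(y)|^p$ and Fatou's lemma gives the local bound directly; and in the closure step you build Haj\l asz gradients $C\mathcal M_{p'}g_\varepsilon$ for the approximants and pass to the limit, which needs Mazur's lemma (available because Haj\l asz pairs are stable under convex combinations), whereas Lemma~\ref{lem:discrete-sobolev-closure} first passes the Poincar\'e inequality to $f$ by weak convergence and then telescopes.
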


We first recall the discrete approximations
used in the proof.
Given $f\in L^1_{\mathrm{loc}}(\mu)$ and $r\in(0,\infty)$ and
$r\in(0,\frac14\operatorname{diam} X]$, we consider the discrete convolution
\begin{equation}\label{eq:approximate-identity}
 f_r:=\sum_{i\in I_r} f_{B_i}\varphi_i,
\end{equation}
where $I_r\subset\mathbb N$ and the centers $\{x_i\}_{i\in I_r}$ of balls
$\{B_i\}_{i\in I_r}$ form a
maximal $r$-separated set. Thus, the balls $\{B_i:=B(x_i,\frac12 r)\}_{i\in I_r}$
are pairwise disjoint and the union of $\{2B_i\}_{i\in I_r}$ covers $X$. For any
$i\in I_r$, the function
$\varphi_i$ satisfies
\begin{equation}\label{eq:partition}
 0\leq\varphi_i\leq1,\
 \operatorname{supp}\varphi_i\subset3B_i,\
 \sum_{i\in I_r}\varphi_i=1,\ \text{and}\
 \operatorname{Lip}(\varphi_i)\lesssim r^{-1}.
\end{equation}
Here, the implicit positive constant depends only on the doubling constant.
For example, let
$\psi_i(x):=\min\{1,(5-4\rho(x,x_i)/r)_+\}$ for any
$x\in X$ and
$\varphi_i:=\psi_i/\sum_j\psi_j$.
For any $i\in I_r$, the
function $\psi_i$ equals one on $2B_i$ and
has support in $3B_i$, and $\{\psi_i\}_{i\in I_r}$
have uniformly bounded overlap and their sum is at least one.

Clearly, $f_r\in\operatorname{Lip}_{\mathrm{loc}}(X)$ and
$f_r\to f$ in $L^1_{\mathrm{loc}}(\mu)$ as $r\to0^+$
(see also the proof of \cite[Proposition~8.2]{HK:W1p}).
For an open set $\Omega\subset X$, write
\[
 I_{\Omega,r}:=\{i\in I_r:2B_i\cap\Omega\neq\emptyset\}.
\]

We record the following estimate for these discrete convolutions that will be
used below.

\begin{lemma}\label{lem:discrete-convolution}
Let $p\in[1,\infty)$, $(X,\rho,\mu)$ be a complete doubling
metric measure space, and $f\in L^1_{\mathrm{loc}}(\mu)$.
Let $f_r$ be as in \eqref{eq:approximate-identity}.
Then, for any open set $\Omega\subset X$,
\begin{equation*}
 \int_\Omega(\operatorname{lip} f_r)^p\,d\mu
 \lesssim\frac1{r^p}\sum_{i\in I_{\Omega,r}}
 \int_{8B_i}|f-f_{8B_i}|^p\,d\mu,
\end{equation*}
where the implicit positive constant depends only on $p$
and the doubling constant.
\end{lemma}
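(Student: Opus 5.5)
The approach is to estimate $\operatorname{lip} f_r$ pointwise and then integrate over $\Omega$ using bounded overlap.

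\textbf{Pointwise bound.} Fix $x\in\Omega$. Choose $j\in I_r$ with $x\in 2B_j$. For any $y$ with $\rho(x,y)<r/2$, the relevant indices are those $i$ with $3B_i\ni x$ or $3B_i\ni y$. All such $i$ satisfy $\rho(x_i,x_j)\le 3r$, and by the doubling condition and the $r$-separation of the centers their number is bounded. Since $\sum_i\varphi_i=1$, we can subtract the constant $f_{8B_j}$:
\[
 f_r(x)-f_r(y)=\sum_{i}\left(f_{B_i}-f_{8B_j}\right)\left[\varphi_i(x)-\varphi_i(y)\right].
\]
By \eqref{eq:partition} this gives
\[
 |f_r(x)-f_r(y)|\lesssim \frac{\rho(x,y)}{r}\sum_{i:\,\rho(x_i,x_j)\le3r}|f_{B_i}-f_{8B_j}|.
\]
For each such $i$ we have $B_i\subset 8B_j$, and $\mu(B_i)\sim\mu(8B_j)$ by doubling. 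Hence
\[
 |f_{B_i}-f_{8B_j}|\le\fint_{B_i}|f-f_{8B_j}|\,d\mu\lesssim\fint_{8B_j}|f-f_{8B_j}|\,d\mu.
\]
This yields
\[
 \operatorname{lip} f_r(x)\lesssim\frac1r\fint_{8B_j}|f-f_{8B_j}|\,d\mu\le \frac1r\left(\fint_{8B_j}|f-f_{8B_j}|^p\,d\mu\right)^{\frac1p},
\]
where the last step is H\"older's inequality. In particular $f_r$ is locally Lipschitz; the sum is locally finite.

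\textbf{Integration.} Note that $j\in I_{\Omega,r}$, because $x\in 2B_j\cap\Omega$. Since the balls $\{2B_j\}_{j\in I_r}$ cover $X$, we get
\[
 \int_\Omega(\operatorname{lip}f_r)^p\,d\mu\le\sum_{j\in I_{\Omega,r}}\int_{2B_j}(\operatorname{lip}f_r)^p\,d\mu\lesssim\sum_{j\in I_{\Omega,r}}\frac{\mu(2B_j)}{r^p\mu(8B_j)}\int_{8B_j}|f-f_{8B_j}|^p\,d\mu.
\]
Because $\mu(2B_j)\le\mu(8B_j)$, the right-hand side is at most the claimed bound.

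\textbf{Main obstacle.} The only delicate point is the bounded cardinality of the indices with $\rho(x_i,x_j)\le3r$, uniformly in $x$ and $r$. This follows from the standard doubling packing argument: the disjoint balls $B_i$ all lie in $8B_j$ and each has measure comparable to $\mu(8B_j)$. All constants then depend only on $p$ and $C_D$.
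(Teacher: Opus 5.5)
Your proposal is correct and follows essentially the same argument as the paper. Both subtract the constant $f_{8B_j}$ using $\sum_i\varphi_i=1$, use that the boundedly many relevant balls $B_i$ lie in $8B_j$ with $\mu(B_i)\sim\mu(8B_j)$ to bound $\operatorname{lip} f_r$ pointwise on $2B_j$, and then sum over the cover $\{2B_j\}_{j\in I_{\Omega,r}}$. The differences are cosmetic: you restrict to $\rho(x,y)<r/2$ rather than $\rho(x,y)<r$, and you make the H\"older step explicit. Your pointwise bound does not depend on $x\in\Omega$ or on which $j$ you chose, so it holds for every $x\in 2B_j$ and every $j$, which is what the summation step actually uses.
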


\begin{proof}
This proof borrows some ideas from the proof of
\cite[Lemma~3.3]{HK:W1p}.
Fix $i\in I_r$, $x\in2B_i$, and $y\in B(x,r)\setminus\{x\}$.
For any $j\in I_r$ with $\varphi_j(x)+\varphi_j(y)\neq0$, the support
condition in \eqref{eq:partition} gives $B_j\subset8B_i$.
Indeed, for $z\in B_j$, $\rho(x_i,z)<r+r+3r/2+r/2=4r$, which implies
$B_j\subset 8B_i$.
The doubling condition gives $\mu(B_j)\sim\mu(8B_i)$.
Moreover, the doubling condition and the disjointness of the balls give
$\sharp\{j\in I_r:\varphi_j(x)+\varphi_j(y)\neq0\}\lesssim1$.
From these estimates and \eqref{eq:partition}, we deduce that
\begin{align*}
 |f_r(x)-f_r(y)|=\left|\sum_{j\in I_r}(f_{B_j}-f_{8B_i})
       [\varphi_j(x)-\varphi_j(y)]\right|
       \lesssim\frac{\rho(x,y)}{r}
       \fint_{8B_i}|f-f_{8B_i}|\,d\mu.
\end{align*}
By dividing by $\rho(x,y)$, letting $y\to x$, and taking the $p$-th power,
we obtain
\[
 [\operatorname{lip} f_r(x)]^p
 \lesssim\frac1{r^p}\fint_{8B_i}|f-f_{8B_i}|^p\,d\mu,
 \quad x\in2B_i,\quad i\in I_r.
\]
Since $\{2B_i\}_{i\in I_r}$ covers $X$ and
$\mu(\Omega\cap2B_i)\leq\mu(8B_i)$ for each $i$, it follows that
 \begin{align*}
 \int_\Omega(\operatorname{lip} f_r)^p\,d\mu
 \leq\sum_{i\in I_{\Omega,r}}
 \int_{\Omega\cap2B_i}(\operatorname{lip} f_r)^p\,d\mu
 \lesssim\frac1{r^p}\sum_{i\in I_{\Omega,r}}
 \int_{8B_i}|f-f_{8B_i}|^p\,d\mu.
 \end{align*}
This completes the proof of Lemma~\ref{lem:discrete-convolution}.
\end{proof}

The following lemma is a consequence of \eqref{eq:variation-definition}
when $p=1$ and of the compactness and maximal function arguments used in
the proof of \cite[Proposition~8.2]{HK:W1p} when $p\in(1,\infty)$.

\begin{lemma}\label{lem:discrete-sobolev-closure}
Let $p\in[1,\infty)$, and let $(X,\rho,\mu)$ be a complete doubling
$p$-Poincar\'e space. If a sequence
$\{u_j\}_{j\in\mathbb N}\subset\operatorname{Lip}_{\mathrm{loc}}(X)$
converges to $f$ in $L^1_{\mathrm{loc}}(\mu)$, then
\[
 \mathcal E_p(f;X)
 \lesssim\liminf_{j\to\infty}\int_X(\operatorname{lip} u_j)^p\,d\mu,
\]
where the implicit positive constant depends only on $p$ and the
geometric constants of $X$.
\end{lemma}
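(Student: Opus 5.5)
The proof splits into the case $p=1$ and the case $p\in(1,\infty)$.

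\emph{Case $p=1$.} This is essentially the definition \eqref{eq:variation-definition}. If the right-hand side $\liminf_{j\to\infty}\int_X\operatorname{lip}u_j\,d\mu$ is finite, then $\{u_j\}$ is an admissible competitor in the infimum defining $|Df|(X)$. Hence
\[
|Df|(X)\le\liminf_{j\to\infty}\int_X\operatorname{lip}u_j\,d\mu,
\]
which gives $f\in\dot{\mathrm{BV}}(\mu)$ and $\mathcal E_1(f;X)=|Df|(X)$ with constant one. If the right-hand side is infinite, there is nothing to prove.

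\emph{Case $p\in(1,\infty)$.} Pass to a subsequence realizing the liminf, and assume it is finite. Then $\{\operatorname{lip}u_j\}$ is bounded in $L^p(\mu)$, and since $p>1$ reflexivity gives a further subsequence with $\operatorname{lip}u_j\rightharpoonup g$ weakly in $L^p(\mu)$. Moreover,
\[
\|g\|_{L^p(\mu)}^p\le\liminf_{j\to\infty}\int_X(\operatorname{lip}u_j)^p\,d\mu .
\]

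The plan for the remaining steps is as follows.
\begin{enumerate}[(a)]
\item On complete doubling $p$-Poincar\'e spaces, $\operatorname{lip}u$ is an upper gradient of a locally Lipschitz $u$. Hence each $u_j$ satisfies the $p$-Poincar\'e inequality with $g_j:=\operatorname{lip}u_j$.
\item The standard telescoping argument over balls $B(x,2^{-k}\rho(x,y))$ and $B(y,2^{-k}\rho(x,y))$ then gives a pointwise bound. For Lebesgue points $x,y$ of $u_j$,
\[
|u_j(x)-u_j(y)|\lesssim\rho(x,y)\bigl[(M_{c\rho(x,y)}g_j^p)^{1/p}(x)+(M_{c\rho(x,y)}g_j^p)^{1/p}(y)\bigr],
\]
where $M$ is the Hardy--Littlewood maximal operator.
\item The exponent $(M g_j^p)^{1/p}$ is not in $L^p$. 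The self-improvement of Keith--Zhong replaces the $p$-Poincar\'e inequality by a $p_0$-Poincar\'e inequality for some $p_0<p$, with constants depending only on the geometric data. Then $g_j^\ast:=(Mg_j^{p_0})^{1/p_0}$ is a Haj\l asz gradient of $u_j$ with $\|g_j^\ast\|_{L^p}\lesssim\|g_j\|_{L^p}$.
\item To pass to the limit, use the maximal-function and compactness argument of \cite[Proposition~8.2]{HK:W1p}. Since $g_j^\ast$ is bounded in $L^p$, Mazur's lemma gives convex combinations converging strongly to some $g^\ast\in L^p$ with $\|g^\ast\|_{L^p}\le\liminf_{j\to\infty}\|g_j^\ast\|_{L^p}$. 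The corresponding convex combinations of $u_j$ still converge to $f$ in $L^1_{\mathrm{loc}}$, and the Haj\l asz inequality \eqref{eq:H-gradient} is preserved under convex combinations.
\item Passing to an a.e.\ convergent subsequence yields $|f(x)-f(y)|\le\rho(x,y)[g^\ast(x)+g^\ast(y)]$ for almost every $x,y$. Therefore $f\in\dot M^{1,p}(\mu)$ and $\mathcal E_p(f;X)\le\|g^\ast\|_{L^p}^p\lesssim\liminf_{j\to\infty}\int_X(\operatorname{lip}u_j)^p\,d\mu$.
\end{enumerate}

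The main obstacle is step (c), converting an upper gradient in $L^p$ into a Haj\l asz gradient in $L^p$. This requires the Keith--Zhong open-ended property, so that the maximal operator is bounded on $L^{p/p_0}$ with $p/p_0>1$. A second subtlety is that the Haj\l asz gradients $g_j^\ast$ must be handled through convex combinations rather than only weak limits, since the inequality \eqref{eq:H-gradient} is not preserved under weak limits alone.
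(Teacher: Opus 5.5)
Your argument is correct, but it passes to the limit at a different stage than the paper does.

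\textbf{The paper's route.} For $p\in(1,\infty)$, the paper first uses Keith--Zhong to get a $(1,a)$-Poincar\'e inequality for some $a\in(1,p)$. It then takes the weak limit $h$ of $(\operatorname{lip}u_j)^a$ in $L^{p/a}(\mu)$. The right-hand side $\fint_{\Lambda_aB}(\operatorname{lip}u_j)^a\,d\mu$ of that inequality is linear in $(\operatorname{lip}u_j)^a$. Testing against $\mathbf 1_{\Lambda_aB}$ and using $u_j\to f$ in $L^1_{\mathrm{loc}}(\mu)$ therefore gives a Poincar\'e inequality for $f$ itself, with $h^{1/a}$ as gradient. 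The telescoping argument is then run only once, at Lebesgue points of $f$, producing the Haj\l asz gradient $C(Mh)^{1/a}$.

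\textbf{Your route.} You run the telescoping argument for each $u_j$, obtaining Haj\l asz gradients $g_j^\ast$ bounded in $L^p(\mu)$. You then need Mazur's lemma and an a.e.\ convergent subsequence of convex combinations to transfer the pointwise inequality \eqref{eq:H-gradient} to $f$.

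\textbf{Comparison.} The paper avoids convex combinations entirely, because weak convergence suffices at the level of the averaged inequality. Your approach is more modular: it separates a per-function estimate $\|u\|_{\dot M^{1,p}(\mu)}\lesssim\|\operatorname{lip}u\|_{L^p(\mu)}$ from a general lower semicontinuity of the Haj\l asz energy under $L^1_{\mathrm{loc}}$ convergence for $p>1$. That second step holds for any sequence with Haj\l asz gradients bounded in $L^p$, not only locally Lipschitz ones.

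\textbf{On step (a).} Your claim that $\operatorname{lip}u_j$ is an upper gradient of the locally Lipschitz $u_j$ is true in any metric space. Indeed, $\operatorname{lip}u(\eta(t))\geq|(u\circ\eta)'(t)|$ for a.e.\ $t$ along an arclength-parametrized curve. This lets you apply \eqref{PS} directly, so you can skip the properness and Lipschitz-extension step the paper uses to invoke the Poincar\'e inequality for Lipschitz functions from \cite{HK:W1p}. You should state or cite this fact rather than attribute it to the Poincar\'e structure.

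The weak limit $g$ of $\operatorname{lip}u_j$ that you extract at the start is never used and can be dropped.
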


\begin{proof}
The case $p=1$ follows from \eqref{eq:variation-definition}.
Let $p\in(1,\infty)$. We may assume that the limit inferior on the
right-hand side is finite and denote it by $K$. We pass to a subsequence
along which the integrals converge to $K$.
By \cite[Lemma~2.6 and Theorem~2.8]{HK:W1p}, we obtain
$a\in(1,p)$ such that the weak $(1,a)$-Poincar\'e inequality holds
with $\operatorname{lip} u$ for any Lipschitz function $u$.
Let $\Lambda_a\in[1,\infty)$ denote its dilation constant, which
depends only on $p$ and the geometric constants of $X$.
Passing to a further subsequence, we may assume that
$(\operatorname{lip} u_j)^a\rightharpoonup h\geq0$ in $L^{p/a}(\mu)$, with
$\|h\|_{L^{p/a}(\mu)}^{p/a}\leq K$.
Fix a ball $B$. Since $X$ is complete and satisfies the doubling
condition, it follows that $X$ is proper. Thus, the restrictions of
$u_j$ to a sufficiently large closed ball admit Lipschitz extensions
to $X$. Using the weak $(1,a)$-Poincar\'e inequality on $B$,
the convergence $u_j\to f$ in $L^1_{\mathrm{loc}}(\mu)$, and the weak
convergence tested against $\mathbf 1_{\Lambda_a B}$, we obtain
\[
 \fint_B|f-f_B|\,d\mu
 \lesssim r_B\left(\fint_{\Lambda_a B}h\,d\mu\right)^{1/a}.
\]
Let $M$ be the Hardy--Littlewood maximal operator.
For any distinct Lebesgue points $x,y$ of $f$, let $r:=\rho(x,y)$.
From the preceding estimate and the doubling condition, we infer that
 \begin{align*}
 |f(x)-f(y)|
 &\leq \sum_{z\in\{x,y\}}\sum_{k\in\mathbb Z_+}
 \left|f_{B(z,2^{-k-1}r)}-f_{B(z,2^{-k}r)}\right|
 +\left|f_{B(x,r)}-f_{B(y,r)}\right|\\
 &\lesssim r\sum_{z\in\{x,y\}}\sum_{k\in\mathbb Z_+}
 2^{-k}[Mh(z)]^{\frac{1}{a}}\lesssim\rho(x,y)\left\{[Mh(x)]^{\frac{1}{a}}
 +[Mh(y)]^{\frac{1}{a}}\right\}.
 \end{align*}
Thus, $C(Mh)^{\frac{1}{a}}$ is a Haj\l asz upper gradient of $f$;
see also \cite[Proposition~8.2]{HK:W1p}.
Since $a\in(1,p)$, the strong maximal inequality yields
\[
 \mathcal E_p(f;X)\lesssim\|Mh\|_{L^{p/a}(\mu)}^{p/a}
 \lesssim\|h\|_{L^{p/a}(\mu)}^{p/a}\leq K.
\]
This completes the proof of Lemma~\ref{lem:discrete-sobolev-closure}.
\end{proof}

\begin{proof}[Proof of Proposition~\ref{prop:curve-asymptotic}]
We may assume that
\begin{equation*}
 K:=\liminf_{\lambda\to L_\gamma}
 \mathcal E^\phi_{\lambda,\gamma,p}(f;X)<\infty.
\end{equation*}
Choose $\lambda_j\to L_\gamma$ such that
\begin{equation}\label{eq:energy-sequence}
 \mathcal E^\phi_{\lambda_j,\gamma,p}(f;X)\longrightarrow K.
\end{equation}
Let $\Lambda$ be the dilation in \eqref{eq:NPI} when
$\gamma\in(-\infty,0)$ and in
Lemma~\ref{lem:curve-local-estimates}(ii) when $\gamma\in(0,\infty)$.
For any fixed ball $B$ with $r_B\in(0,\operatorname{diam} X)$, we first show
\begin{equation}\label{eq:local-p}
 \int_B|f-f_B|^p\,d\mu
 \lesssim r_B^p\liminf_{j\to\infty}
 \mathcal E^\phi_{\lambda_j,\gamma,p}(f;\Lambda B).
\end{equation}
When $\gamma\in(-\infty,0)$, \eqref{eq:NPI} with $\lambda=\lambda_j$
gives
\[
 \int_B|f-f_B|^p\,d\mu
 \lesssim\lambda_j^p r_B^p\mu(B)\phi(B)^{\gamma p}
 +r_B^p\mathcal E^\phi_{\lambda_j,\gamma,p}(f;\Lambda B).
\]
Since $\lambda_j\to0^+$, it follows that \eqref{eq:local-p} holds.
When $\gamma\in(0,\infty)$, using
Lemma~\ref{lem:curve-local-estimates}(ii) with $f$ replaced by $\frac f{\lambda_j}$,
multiplying by $\lambda_j^p$, and using Fatou's lemma, we obtain
\eqref{eq:local-p} because $\lambda_j\to\infty$.

Fix a sufficiently small $r\in(0,\infty)$, and let $f_r$ be as in
\eqref{eq:approximate-identity}. By the doubling condition and the
disjointness of $\{B_i\}_{i\in I_r}$, the balls
$\{8\Lambda B_i\}_{i\in I_r}$ have
bounded overlap. In particular,
\[
 \sum_{i\in I_r}\mathbf 1_{8\Lambda B_i}(x)\mathbf 1_{8\Lambda B_i}(y)
 \leq\sum_{i\in I_r}\mathbf 1_{8\Lambda B_i}(x)\lesssim1.
\]
Tonelli's theorem therefore gives, for any $\lambda\in(0,\infty)$,
\begin{equation}\label{eq:two-point-overlap}
 \sum_{i\in I_r}
 \mathcal E^\phi_{\lambda,\gamma,p}(f;8\Lambda B_i)
 \lesssim\mathcal E^\phi_{\lambda,\gamma,p}(f;X).
\end{equation}
Using Lemma~\ref{lem:discrete-convolution} with $\Omega=X$,
\eqref{eq:local-p} for the balls $8B_i$, Fatou's lemma,
\eqref{eq:two-point-overlap}, and \eqref{eq:energy-sequence}, we obtain
\begin{align}\label{eq:global-gradient-bound}
 \int_X(\operatorname{lip} f_r)^p\,d\mu
 &\lesssim\sum_{i\in I_r}\liminf_{j\to\infty}
 \mathcal E^\phi_{\lambda_j,\gamma,p}(f;8\Lambda B_i)\notag\\
 &\leq\liminf_{j\to\infty}\sum_{i\in I_r}
 \mathcal E^\phi_{\lambda_j,\gamma,p}(f;8\Lambda B_i)
 \lesssim K,
\end{align}
where the implicit positive constants depend only on $p$, $\gamma$,
$C$, $\Lambda$, and the geometric constants of $X$ when
$\gamma\in(-\infty,0)$, and only on $p$, $\gamma$, and the geometric
constants of $X$ when $\gamma\in(0,\infty)$.
The assumption $q\in[1,p]$ also gives a weak $(1,p)$-Poincar\'e inequality
on $X$. Since $f_r\to f$ in $L^1_{\mathrm{loc}}(\mu)$ as $r\to0^+$,
it follows from \eqref{eq:global-gradient-bound} and
Lemma~\ref{lem:discrete-sobolev-closure} that
$\mathcal E_p(f;X)\lesssim K$.
This completes the proof of Proposition~\ref{prop:curve-asymptotic}.
\end{proof}

\section{The Adaptive Dyadic Method}\label{sec:volume-proofs}

The main goal of this section is to prove
Proposition~\ref{prop:volume-negative-poincare}.
Subsection~\ref{subsec:dyadic-refinement} establishes estimates for
balanced dyadic partitions, and
Subsection~\ref{subsec:volume-negative} uses them to prove the
Poincar\'e inequalities. We then combine this proposition with the
results of Section~\ref{sec:metric-proofs} to complete the proofs of
Theorems~\ref{thm:NPI} and~\ref{thm:asymptotic}.

\begin{proposition}\label{prop:volume-negative-poincare}
Let $p\in[1,\infty)$, and let $(X,\rho,\mu)$ be a complete doubling
$p$-Poincar\'e space with an upper dimension $D$ as in
\eqref{eq:volume-upper-growth}. Let $\phi\in\{\rho,V\}$.
If either $\phi=\rho$ and $\gamma\in(-\infty,-\frac{D}{p}]$ or
$\phi=V$ and $\gamma\in(-\infty,-\frac1p]$, then there exists
$\Lambda\in[1,\infty)$ such that, for any ball $B$
with its radius $r\in(0,\operatorname{diam} X)$ and for any
measurable function $f:X\to\mathbb R$,
\begin{equation*}
 \fint_B\fint_B|f(x)-f(y)|^p\,d\mu(x)\,d\mu(y)
 \lesssim r^p\phi(B)^{p\gamma}
 +\frac{r^p}{\mu(B)}\mathcal E^\phi_{1,\gamma,p}(f;\Lambda B),
\end{equation*}
where $\Lambda$ and the
implicit positive constant depend only on $p$, $\gamma$, and the
geometric constants of $X$.
\end{proposition}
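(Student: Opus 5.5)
We prove the estimate by a dyadic argument on $\Lambda B$, following the outline in the introduction. Curves are not used. We use the Christ-type dyadic cubes $\{Q\}$ of $X$ restricted to a neighbourhood of $B$. Each cube $Q$ has a center $x_Q$, side $\ell(Q)\sim\delta^k$, and satisfies $B(x_Q,c\ell(Q))\subset Q\subset B(x_Q,C\ell(Q))$. Set $\phi(Q):=\ell(Q)$ if $\phi=\rho$ and $\phi(Q):=\mu(Q)$ if $\phi=V$. For each cube let $m_Q$ be a median of $f$ on $Q$. The target is to bound $\fint_B\fint_B|f-m|^p$, with $m$ the median on a top cube $Q_0\supset B$ of side $\sim r$, by $r^p\phi(B)^{p\gamma}$ plus $\mu(B)^{-1}r^p\mathcal E^\phi_{1,\gamma,p}(f;\Lambda B)$.

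\emph{Step 1: marked cubes.} Call a cube $Q$ \emph{marked} if some neighbouring cube $Q'$ satisfies $|m_Q-m_{Q'}|>M\ell(Q)\phi(Q)^\gamma$. Here a neighbour is a cube of comparable size at distance $\lesssim\ell(Q)$, including children and parents, and $M$ is a large constant. If $Q$ is marked, at least half of $Q$ lies at or below $m_Q$ and at least half of $Q'$ lies at or above $m_{Q'}$, or the reverse. So a set of pairs $(x,y)\in Q\times Q'$ of measure $\gtrsim\mu(Q)^2$ has $|f(x)-f(y)|>\rho(x,y)\phi(x,y)^\gamma$, since $\rho(x,y)\phi(x,y)^\gamma\lesssim\ell(Q)\phi(Q)^\gamma$ by doubling. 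This is the low/high level-set argument of Bourgain--Nguyen. On these pairs the kernel satisfies $\phi(x,y)^{\gamma p}/V(x,y)\sim\phi(Q)^{\gamma p}/\mu(Q)$. Hence
\[
 \mu(Q)\phi(Q)^{\gamma p}\lesssim\mathcal E^\phi_{1,\gamma,p}(f;Q\cup Q').
\]
Bounded overlap then gives $\sum_{Q\text{ marked}}\mu(Q)\phi(Q)^{\gamma p}\lesssim\mathcal E^\phi_{1,\gamma,p}(f;\Lambda B)$.

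\emph{Step 2: adaptive refinement.} Starting from the partition of $Q_0$ into cubes of scale $r$, refine recursively. Subdivide every marked cube, and then subdivide further so that neighbouring cubes of the final partition $\mathcal P$ differ by at most one generation (a balanced tree). If $\mathcal E<\infty$, the number of marked cubes at generation $k$ is finite, and the procedure yields a possibly infinite partition of a.e.\ point. Lemma~\ref{lem:volume-weighted-linear-closure} should bound the weighted sum $\sum_{Q\in\mathcal P}\mu(Q)\phi(Q)^{\gamma p}$ by the sum over the initial cubes plus the sum over the marked cubes. The balancing step adds only $O(1)$ neighbours per cube, and each added cube is comparable to a marked cube in weight.

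The sign condition on $\gamma$ is used exactly here, and also for convergence. For $\phi=\rho$, a child satisfies $\mu(Q')\ell(Q')^{\gamma p}\gtrsim\delta^{D+\gamma p}\mu(Q)\ell(Q)^{\gamma p}$ with $D+\gamma p\le0$, so the weights are nondecreasing under refinement. For $\phi=V$ the weight is $\mu(Q)^{1+\gamma p}$ with $1+\gamma p\le0$. In both cases the weights do not decay down the tree. Exactly as in Lemma~\ref{lem:dyadic-endpoint}(i), finiteness of the energy then forces the marked family to be finite, so $\mathcal P$ is a finite partition. The initial cubes contribute $\mu(B)\phi(B)^{\gamma p}$.

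\emph{Step 3: discrete Poincar\'e estimate.} On each final cube $P\in\mathcal P$, which is unmarked, we compare $f$ with $m_P$. The aim is to control $\sum_P\int_P|f-m_P|^p$, together with $\sum_P\mu(P)|m_P-m|^p$, by $r^p$ times the weighted sum. For the medians we use a discrete Poincar\'e inequality on the adjacency graph of $\mathcal P$. This is where the $p$-Poincar\'e hypothesis (rather than $q$) enters. The function $P\mapsto m_P$ has discrete gradient at most $M\ell(P)\phi(P)^\gamma$ across neighbours, since neighbouring unmarked cubes have close medians. Extend it to a Lipschitz-type function via a partition of unity subordinate to $\mathcal P$. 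Its upper gradient is $\lesssim\phi(P)^\gamma$ on $P$. Then apply \eqref{PS} on $\Lambda B$, which gives a bound $r^p\sum_P\mu(P)\phi(P)^{\gamma p}/\mu(B)$.

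The oscillation of $f$ inside each $P$ is handled by iterating. Points of $P$ are reached along the chain of unmarked descendants, whose median jumps are summable. The alternative is a Lebesgue-differentiation argument: medians of descendants converge to $f$ a.e. The jump sum is $\sum_k\ell(P_k)\phi(P_k)^\gamma$ over the descendant chain, and it converges only if $\ell\phi^\gamma$ decays. This should hold since $1+\gamma\cdot(\text{dim})$ behaves appropriately; for $\phi=V$ we need $1+\gamma d>0$, which might fail. This is the step I expect to be the main obstacle. The likely fix is to refine until cubes are also unmarked at all finer scales, which means marking a cube whenever any descendant chain has a large jump. The extra marked cubes would then be charged to the energy through the nondecreasing weights of Step~2.

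Combining Steps 1--3 with $|f(x)-f(y)|\le|f(x)-m|+|f(y)-m|$ gives the proposition.
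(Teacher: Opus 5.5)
Your Steps 1 and 2, and the median half of Step 3, follow the paper's architecture. These are: neighbour marking charged to the energy through low/high half-level sets; balanced refinement controlled by Lemma~\ref{lem:volume-weighted-linear-closure}, which is exactly where $D+p\gamma\le0$, resp.\ $1+p\gamma\le0$, enters; and a discrete Poincar\'e inequality for the medians as in Lemma~\ref{lem:volume-balanced-cover}. That last step needs the $(p,p)$ form of the Poincar\'e inequality rather than \eqref{PS} itself.

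The genuine gap is the one you flag, and your proposed fix does not close it. Nothing in your argument controls $\sum_{P\in\mathcal P}\int_P|f-m_P|^p\,d\mu$ on the final leaves. Neighbour marking says nothing about how $f$ is distributed inside an unmarked leaf. Marking along descendant chains cannot help either, because in the negative range the threshold $\ell(Q)\phi(Q)^\gamma$ does not decay down the tree. For instance, for $\phi=\rho$ and $\gamma\le-1$ it equals $\ell(Q)^{1+\gamma}$, which is nondecreasing as $\ell(Q)\to0$; for $\phi=V$ the same happens when $1+\gamma d\le0$. So the chain sums $\sum_k\ell(P_k)\phi(P_k)^\gamma$ diverge no matter how many cubes you mark, and fine-scale marking yields no pointwise bound.

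The paper supplies three ingredients you are missing.
\begin{enumerate}[(i)]
\item An internal marking rule $m_P^\varepsilon(g)-m_P^{1-\varepsilon}(g)>A_0r_P\phi(P)^\gamma$. It is charged to the energy just like the neighbour rule, via the pairs $(P\cap\{g\le m_P^{1-\varepsilon}(g)\})\times(P\cap\{g\ge m_P^\varepsilon(g)\})$. On unmarked leaves it confines $g$ to within $A_0r_P\phi(P)^\gamma$ of $a_P(g)$, except on a set of measure at most $2\varepsilon\mu(P)$.
\item A restriction to bounded functions. The function may be arbitrarily large on that exceptional set, so the leaf argument is run only for $g:X\to[0,t]$ satisfying the two-region condition \eqref{eq:volume-two-region-measure-assumption}. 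The exceptional sets then cost at most $2^p\varepsilon t^p\mu(U)$, which is absorbed into $\inf_c\int_B|g-c|^p\,d\mu\ge4^{-p}\kappa t^p\mu(B)$ (Lemma~\ref{lem:volume-two-region}).
\item A passage back to general $f$. A stopping-time argument over the maximal cubes where $\{g=t\}$ has density $>\vartheta$ (Lemma~\ref{lem:volume-tail}) gives
\[
t^p\mu(U\cap\{g=t\})\lesssim r^p\mathcal E^\phi_{1,\gamma,p}(g;\Lambda B)+\tfrac{4\vartheta}{3}t^p\mu(U\cap\{g>\tfrac t2\}),
\]
with no lower bound on the density of $\{g=t\}$. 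This is summed over the dyadic level bands $h_k$ of $h=\min\{(f-a_U(f))_\pm,M\}$, using $\sum_k\mathcal E^\phi_{1,\gamma,p}(h_k;\Lambda B)\le6\,\mathcal E^\phi_{1,\gamma,p}(h;\Lambda B)$ and absorbing the $\vartheta$-term.
\end{enumerate}

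A smaller point concerns Step 1. The bound $\rho(x,y)\phi(x,y)^\gamma\lesssim\ell(Q)\phi(Q)^\gamma$ fails for close pairs when $\gamma<0$. You must therefore discard pairs with $\rho(x,y)<\eta\ell(Q)$, which the lower dimension allows at a cost of only a small fraction of the pairs. This distance lower bound is also what gives the bounded overlap of the charged pair sets across generations that you invoke.
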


\subsection{Dyadic Refinement Estimates}
\label{subsec:dyadic-refinement}

In this subsection, we first prove a Poincar\'e estimate for balanced
dyadic partitions. We then bound the weighted sum over a refined
partition by the sums over the initial and marked cubes.

We use the dyadic cube construction in
\cite[Theorem~2.2]{HytKairema}. Let
$\delta\in(0,1)$ and $c_1,C_1\in(0,\infty)$ satisfy $c_1<C_1$.  A
collection
\[
 \mathcal D:=\bigcup_{k\in\mathbb Z}\mathcal D_k,
 \quad
 \mathcal D_k:=\{Q_\alpha^k\subset X:\alpha\in\mathcal A_k\},
\]
where $\mathcal A_k$ is at most countable, is called a dyadic cube
system with parameters $(\delta,c_1,C_1)$ if the following properties
hold:
\begin{enumerate}[(i)]
 \item for any $k\in\mathbb Z$, the cubes in $\mathcal D_k$ are
 pairwise disjoint and
 $X=\bigcup_{\alpha\in\mathcal A_k}Q_\alpha^k$,
 \item if $\ell,k\in\mathbb Z$, $\ell\geq k$,
 $\alpha\in\mathcal A_k$, and $\beta\in\mathcal A_\ell$, then either
 $Q_\beta^\ell\subset Q_\alpha^k$ or
 $Q_\beta^\ell\cap Q_\alpha^k=\emptyset$,
 \item for any $k\in\mathbb Z$ and $\alpha\in\mathcal A_k$, there exists
 $x_\alpha^k\in X$ such that
 \begin{align}\label{eq:dyadic-ball}
  B(x_\alpha^k,c_1\delta^k)\subset Q_\alpha^k
  \subset B(Q_\alpha^k):=B(x_\alpha^k,C_1\delta^k),
 \end{align}
 \item if $P,Q\in\mathcal D$ and $P\subset Q$, then
 $B(P)\subset B(Q)$.
\end{enumerate}
For $Q=Q_\alpha^k\in\mathcal D$, the radius and center of $B(Q)$
are denoted by
\begin{align}\label{eq:dyadic-radii}
r_Q:=C_1\delta^k\ \text{and}\ x_Q:=x_\alpha^k.
\end{align}

For the choice of $\phi$ in Proposition~\ref{prop:volume-negative-poincare},
we also write $\phi(Q):=r_Q$ if $\phi=\rho$ and
$\phi(Q):=\mu(Q)$ if $\phi=V$.
A cube $J\in\mathcal D$ is called a \emph{child} of $I\in\mathcal D$ if
$J\subset I$ and $r_J=\delta r_I$; equivalently, $I$ is the \emph{parent} of
$J$.  For $j\in\mathbb Z_+$ and $Q\in\mathcal D$, write
\begin{equation}\label{eq:dyadic-descendants}
 \mathcal D_j(Q):=\left\{P\in\mathcal D:P\subset Q
 \text{ and }r_P=\delta^jr_Q\right\}
 \ \text{and}\ \mathcal D(Q):=\bigcup_{j\in\mathbb Z_+}\mathcal D_j(Q).
\end{equation}
Any cube has a unique parent and finitely many children, and it is the
disjoint union of its children. We fix such a system throughout this section.
The fixed dyadic parameters $\delta,c_1,C_1$ are also included among
the geometric constants of $X$ in this section.

Fix a sufficiently large constant $A_{\rm n}\in[4,\infty)$,
depending only on the geometric constants of $X$.
Distinct dyadic cubes $P,P'\in\mathcal{D}$ are called \emph{neighbors}, written $P\sim P'$, if
\begin{equation}\label{eq:dyadic-neighbors}
 A_{\rm n}B(P)\cap A_{\rm n}B(P')\neq\emptyset,
\end{equation}
where $B(P)$ is as in \eqref{eq:dyadic-ball}.
Following the convention in \cite[Section~2.4]{HKT17}, for
$\theta\in(0,1)$, a measurable function $f:X\to\mathbb R$, and
a measurable set $E\subset X$ with $\mu(E)\in(0,\infty)$,
define the \emph{$\theta$-median} of $f$ on $E$ by
\begin{equation}\label{eq:volume-quantiles}
 m_E^\theta(f):=\inf\left\{a\in\mathbb R:
 \mu\left(E\cap\left\{f>a\right\}\right)<\theta\mu(E)\right\}.
\end{equation}
This definition gives
\begin{equation}\label{eq:volume-median-convention}
 \mu\left(E\cap\left\{f<m_E^\theta(f)\right\}\right)
 \leq(1-\theta)\mu(E)
 \ \text{and}\
 \mu\left(E\cap\left\{f>m_E^\theta(f)\right\}\right)
 \leq\theta\mu(E).
\end{equation}
For $\theta=\frac12$, we write $a_E(f):=m_E^{\frac12}(f)$.
For $\varepsilon\in(0,\frac12)$, by
\eqref{eq:volume-quantiles} and
\eqref{eq:volume-median-convention}, we obtain
$m_P^{1-\varepsilon}(g)\leq a_P(g)\leq m_P^\varepsilon(g)$,
\begin{equation}\label{eq:volume-quantile-mass}
 \min\left\{\mu\left(P\cap\left\{g\leq m_P^{1-\varepsilon}(g)\right\}\right),
       \mu\left(P\cap\left\{g\geq m_P^\varepsilon(g)\right\}\right)\right\}
 \geq\varepsilon\mu(P),
\end{equation}
and
\begin{equation}\label{eq:volume-quantile-concentration}
 \mu\left(P\setminus\left\{m_P^{1-\varepsilon}(g)\leq g\leq m_P^\varepsilon(g)\right\}\right)
 \leq2\varepsilon\mu(P)
\end{equation}
for any $P\in\mathcal D$ and any measurable function $g:X\to\mathbb R$.

We consider a set $U\subset X$ of the form
\[
 U=\bigcup_{Q\in\mathcal T_0}Q,
\]
where $\mathcal T_0$ is a subcollection of some
$\mathcal D_{k_0}$ with $k_0\in\mathbb Z$. Fix such a representation of $U$.
A \emph{dyadic leaf partition} of $U$ is a pairwise disjoint collection $\mathcal T$ of
descendants of cubes in $\mathcal T_0$ such that
\begin{align}\label{eq:leaf-union}
 U=\bigcup_{P\in\mathcal T}P.
\end{align}
The cubes in $\mathcal T$ are called its \emph{leaves}.
A leaf partition $\mathcal T$ is said to be
\emph{balanced} if
\begin{align}\label{eq:neighbor-r}
 \frac{r_P}{r_{P'}}\in\left\{1,\delta,\delta^{-1}\right\}
 \quad\text{whenever }P,P'\in\mathcal T\text{ with }P\sim P'.
\end{align}
The terminology is motivated by the $2{:}1$ balance condition for
adaptive quadtrees and octrees; see
\cite{SundarSampathBiros08,BursteddeWilcoxGhattas11}.
Since the leaves are pairwise disjoint, from
\eqref{eq:dyadic-ball}, \eqref{eq:neighbor-r}, and the geometric
doubling condition, it follows that
\begin{align}\label{eq:overlap-bound}
 \sup_{x\in X}\sum_{P\in\mathcal T}\mathbf 1_{A_{\rm n}B(P)}(x)\lesssim1
 \ \text{and}\
 \sup_{P\in\mathcal T}\sharp \{P'\in\mathcal T:P'\sim P\}\lesssim1,
\end{align}
where the implicit positive constants depend only on the geometric
constants of $X$.

\begin{lemma}\label{lem:volume-balanced-cover}
Let $p\in[1,\infty)$, and let $(X,\rho,\mu)$ be a complete doubling
$p$-Poincar\'e space. Then there exists a constant $C\in[1,\infty)$,
depending only on the geometric constants of $X$, such that, for any
ball $B\subset X$ and any finite balanced leaf partition
$\mathcal T$ of $U\supset CB$ satisfying
\[
 P\in\mathcal T,\  A_{\rm n}B(P)\cap CB\neq\emptyset
 \ \Longrightarrow\
 r_P\leq r_{B}\ \text{and}\  A_{\rm n}B(P)\subset U,
\]
one has
\begin{equation}\label{eq:volume-balanced-cover}
 \inf_{c\in\mathbb R}\sum_{\genfrac{}{}{0pt}{}{P\in\mathcal T}{P\cap B\neq\emptyset}}
 \mu(P)|a_P-c|^p
 \lesssim r_B^p
 \sum_{\genfrac{}{}{0pt}{}{P\in\mathcal T}{
 A_{\rm n}B(P)\cap CB\neq\emptyset}}
 \sum_{\genfrac{}{}{0pt}{}{P'\in\mathcal T}{P'\sim P}}
 \frac{\mu(P)}{r_P^p}|a_P-a_{P'}|^p
\end{equation}
for any real coefficients $\{a_P\}_{P\in\mathcal T}$, where the
implicit positive constant depends only on $p$ and the geometric
constants of $X$.
\end{lemma}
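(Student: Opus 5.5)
We prove the discrete Poincaré estimate \eqref{eq:volume-balanced-cover} by building a piecewise Lipschitz function from the coefficients $\{a_P\}$ via a partition of unity subordinate to the leaves. Then we apply the weak $(1,p)$-Poincaré inequality \eqref{PS} (we only need the $p$-version, which follows from the $q$-version by H\"older). Fix the balanced partition $\mathcal T$ and the coefficients. For each $P\in\mathcal T$ take a Lipschitz bump $\psi_P$ equal to $1$ on $B(P)$, supported in $2B(P)$, and with $\operatorname{Lip}\psi_P\lesssim r_P^{-1}$. Set $\varphi_P:=\psi_P/\sum_{P'}\psi_{P'}$ on the region where the leaves near $CB$ live. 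Define $u:=\sum_P a_P\varphi_P$.

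The first step is to record the local geometry. Since $A_{\rm n}\ge4$, if $2B(P)\cap2B(P')\neq\emptyset$ then $P\sim P'$. By \eqref{eq:neighbor-r}, such leaves have comparable radii. By \eqref{eq:overlap-bound}, only boundedly many $\varphi_{P'}$ are nonzero on $2B(P)$. The hypothesis $A_{\rm n}B(P)\subset U$ guarantees that on $2B(P)$, for $P$ meeting $CB$, the sum $\sum\psi_{P'}\ge1$; indeed the leaves cover $U$ and each point of $U$ lies in some $P'\subset B(P')$. Hence each $\varphi_{P'}$ is $\lesssim r_P^{-1}$-Lipschitz there. Writing $u-a_P=\sum_{P'\sim P}(a_{P'}-a_P)\varphi_{P'}$ on $2B(P)$ then gives
\[
 \operatorname{lip}u\lesssim\sum_{P'\sim P}\frac{|a_P-a_{P'}|}{r_P}\quad\text{on }P .
\]

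The second step applies \eqref{PS} with exponent $p$, in the form $\fint_{B_*}|u-u_{B_*}|^p\lesssim r_{B_*}^p\fint_{\Lambda_PB_*}(\operatorname{lip}u)^p$. Here $B_*:=2B$, and we use that $\operatorname{lip}u$ is an upper gradient of the locally Lipschitz $u$. Choose $C$ so large that $\Lambda_P B_*\subset CB$ and the leaves involved satisfy $r_P\le r_B$. The $p$-version itself follows from the $(1,p)$-inequality, since in doubling spaces with a weak $(1,p)$-Poincaré inequality one has the $(p,p)$-version after enlarging the dilation. Integrating the pointwise bound over the leaves covering $\Lambda_P B_*$ then yields the right-hand side of \eqref{eq:volume-balanced-cover}, up to the doubling constant and with $\mu(B)$ factors cancelling. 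This works because every such leaf has $A_{\rm n}B(P)\cap CB\neq\emptyset$.

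The third step, which we expect to be the main obstacle, is passing from $u$ back to the coefficients on the left. For a leaf $P$ meeting $B$ we have $r_P\le r_B$, so $P\subset B_*$ once $C_1$ is accounted for; otherwise we enlarge $B_*$ to a fixed multiple of $B$. On $P$,
\[
 |a_P-c|\le|u(x)-c|+|u(x)-a_P|\le|u(x)-c|+\sum_{P'\sim P}|a_P-a_{P'}|.
\]
Averaging over $x\in P$ with $c=u_{B_*}$ gives
\[
 \mu(P)|a_P-c|^p\lesssim\int_P|u-c|^p+\mu(P)\sum_{P'\sim P}|a_P-a_{P'}|^p .
\]
Summing over the disjoint leaves, the first terms are controlled by the second step. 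The second terms are bounded by $r_B^p\sum\mu(P)r_P^{-p}|a_P-a_{P'}|^p$ because $r_P\le r_B$. The delicate points are verifying that the denominators $\sum\psi_{P'}$ stay bounded below on all of $\Lambda_P B_*$, and handling leaves near the boundary of the region. Both are handled by the containment hypothesis $A_{\rm n}B(P)\subset U$ together with $U\supset CB$, with $C$ fixed depending only on $\Lambda_P$, $C_1$, $\delta$, and $A_{\rm n}$.
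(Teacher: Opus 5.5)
Your proposal is correct and follows essentially the same route as the paper: a partition of unity $\varphi_P=\psi_P/\Psi$ subordinate to the leaves, and the weak $(p,p)$-Poincar\'e inequality applied to $u=\sum a_P\varphi_P$ on $B_*$ with the local bound $|u-a_P|\le\sum_{P'\sim P}|a_P-a_{P'}|$ and its $r_P^{-1}$ gradient analogue, followed by summation over the disjoint leaves meeting $B$. The only adjustments are ones you already anticipate: take $B_*=3B$ so that leaves meeting $B$ lie in $B_*$, and, as the paper does, restrict the sum defining $u$ to leaves with $A_{\rm n}B(P)\cap\Lambda B_*\neq\emptyset$; their cutoffs are supported in $U$, where $\Psi\geq1$, so $u$ is globally Lipschitz and the Poincar\'e inequality applies.
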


\begin{proof}
By the quasiconvexity of complete doubling Poincar\'e spaces and
\cite[Theorem~8.3.2 and Corollary~9.1.36]{HKST}, $X$ supports a weak
$(p,p)$-Poincar\'e inequality with a dilation $\Lambda\in[1,\infty)$,
where $\Lambda$ and the Poincar\'e constant depend only on $p$
and the geometric constants of $X$.
Choose $C:=4\Lambda$ and let $B_*:=3B$.
Let $U$ and its finite balanced leaf partition
$\mathcal T$ satisfy the assumptions of the present lemma.
We use standard normalized distance cutoffs, as in the Lipschitz
partition of unity construction in \cite[Section~4.1]{HKST}.
For any $P\in\mathcal T$, define
\[
 \psi_P:=\max\left\{0,
 1-\frac{\operatorname{dist}(\cdot,P)}{r_P}\right\}
 \ \text{and}\
 \Psi:=\sum_{Q\in\mathcal T}\psi_Q.
\]
By \eqref{eq:dyadic-ball}, we obtain, for any $P\in\mathcal T$,
\begin{align}\label{eq:supp-lip}
 \operatorname{supp}\psi_P\subset\overline{2B(P)}
 \subset A_{\rm n}B(P)\ \text{and}\
 \operatorname{Lip}\psi_P\lesssim r_P^{-1}.
\end{align}
From \eqref{eq:leaf-union}, it follows that $\Psi\geq1$ on $U$.
Since $\Lambda B_*\subset CB$, we infer from the assumption and
\eqref{eq:supp-lip} that $\operatorname{supp}\psi_P\subset U$
whenever $A_{\rm n}B(P)\cap\Lambda B_*\neq\emptyset$.
For any such $P$, define
\[
 \varphi_P(x):=
 \begin{cases}
  \displaystyle\frac{\psi_P(x)}{\Psi(x)}&
  \text{if }\Psi(x)>0,\\
  0&\text{if }\Psi(x)=0.
 \end{cases}
\]
We claim that, for any $P\in\mathcal T$ with
$A_{\rm n}B(P)\cap\Lambda B_*\neq\emptyset$,
$\operatorname{supp}\varphi_P\subset A_{\rm n}B(P)$
and $\operatorname{Lip}\varphi_P\lesssim r_P^{-1}$ and that
\begin{equation}\label{eq:volume-leaf-partition}
 \sum_{\genfrac{}{}{0pt}{}{P\in\mathcal T}{
 A_{\rm n}B(P)\cap\Lambda B_*\neq\emptyset}}\varphi_P=1
 \quad\text{on }\Lambda B_*.
\end{equation}
Using \eqref{eq:neighbor-r}, \eqref{eq:overlap-bound},
\eqref{eq:supp-lip}, and $\Psi\geq1$ on $\operatorname{supp}\psi_P$,
we obtain the support and the Lipschitz bounds.
For any $x\in\Lambda B_*$, only cubes $P\in\mathcal T$ with
$A_{\rm n}B(P)\cap\Lambda B_*\neq\emptyset$ contribute to $\Psi(x)$.
Since $\Psi(x)>0$, it follows that the sum in
\eqref{eq:volume-leaf-partition} equals $1$.

Fix $\{a_P\}_{P\in\mathcal T}\subset \mathbb{R}$. Define
\[
 u:=\sum_{\genfrac{}{}{0pt}{}{P\in\mathcal T}{
 A_{\rm n}B(P)\cap\Lambda B_*\neq\emptyset}}a_P\varphi_P
\]
and
\[
 g:=L\sum_{\genfrac{}{}{0pt}{}{P\in\mathcal T}{
 A_{\rm n}B(P)\cap\Lambda B_*\neq\emptyset}}\mathbf 1_{A_{\rm n}B(P)}
 \sum_{\genfrac{}{}{0pt}{}{P'\in\mathcal T}{P'\sim P}}
 \frac{|a_P-a_{P'}|}{r_P},
\]
where $L$ is a sufficiently large constant depending only
on the geometric constants of $X$.
Define the upper pointwise Lipschitz constant of $u$ by
\[
 h_u(x):=\limsup_{\genfrac{}{}{0pt}{}{y\to x}{y\neq x}}
 \frac{|u(y)-u(x)|}{\rho(x,y)}.
\]
Since $u$ is a finite sum of Lipschitz functions by
\eqref{eq:volume-leaf-partition}, it follows from
\cite[Lemma~6.2.6]{HKST} that $h_u$ is an upper gradient of $u$.
We next show that $h_u\leq g$ on $\Lambda B_*$.
By \eqref{eq:dyadic-neighbors} and \eqref{eq:volume-leaf-partition},
we find that, for any $P\in\mathcal T$ with
$A_{\rm n}B(P)\cap\Lambda B_*\neq\emptyset$,
\begin{align}\label{eq:u-aP}
 u-a_P=\sum_{\genfrac{}{}{0pt}{}{P'\in\mathcal T,\ P'\sim P}{
 A_{\rm n}B(P')\cap\Lambda B_*\neq\emptyset}}
 (a_{P'}-a_P)\varphi_{P'}
 \quad\text{on }\Lambda B_*\cap A_{\rm n}B(P).
\end{align}
Now, fix $x\in\Lambda B_*\subset U$. Using \eqref{eq:leaf-union}, we find that $x$
belongs to a unique $P\in\mathcal T$. Since $P\subset A_{\rm n}B(P)$,
it follows that $\Lambda B_*\cap A_{\rm n}B(P)$ is an open neighborhood of $x$.
By \eqref{eq:u-aP}, \eqref{eq:neighbor-r}, and
\eqref{eq:volume-leaf-partition}, we obtain
\begin{align}\label{eq:h-g}
 h_u(x)
 \leq \sum_{\genfrac{}{}{0pt}{}{P'\in\mathcal T,\ P'\sim P}{
 A_{\rm n}B(P')\cap\Lambda B_*\neq\emptyset}}
 |a_{P'}-a_P|\operatorname{Lip}\varphi_{P'}\lesssim \frac1{r_P}\sum_{\genfrac{}{}{0pt}{}{P'\in\mathcal T}{P'\sim P}}
 |a_{P'}-a_P|
 \leq g(x).
\end{align}
From \eqref{eq:dyadic-ball}, \eqref{eq:overlap-bound}, and the
doubling condition, we deduce that
\begin{equation}\label{eq:volume-leaf-upper-gradient-integral}
 \int_X g^p\,d\mu
 \lesssim
 \sum_{\genfrac{}{}{0pt}{}{P\in\mathcal T}{
 A_{\rm n}B(P)\cap\Lambda B_*\neq\emptyset}}
 \sum_{\genfrac{}{}{0pt}{}{P'\in\mathcal T}{P'\sim P}}
 \frac{\mu(P)}{r_P^p}|a_P-a_{P'}|^p.
\end{equation}
By the assumption and \eqref{eq:dyadic-ball}, we conclude that
$r_P\leq r_B$ and $P\subset B_*$ whenever $P\cap B\neq\emptyset$.
Using \eqref{eq:u-aP} and \eqref{eq:overlap-bound}, we obtain,
for any such $P$ and any $c\in\mathbb R$,
\[
 \mu(P)|a_P-c|^p
 \lesssim\int_P|u-c|^p\,d\mu
 +\frac{r_B^p}{r_P^p}\mu(P)
 \sum_{\genfrac{}{}{0pt}{}{P'\in\mathcal T}{P'\sim P}}
 |a_P-a_{P'}|^p.
\]
By summing over the pairwise disjoint leaves meeting $B$, we find that
\begin{align}\label{eq:volume-leaf-oscillation-integral}
 \sum_{\genfrac{}{}{0pt}{}{P\in\mathcal T}{P\cap B\neq\emptyset}}\mu(P)|a_P-c|^p
 &\lesssim \int_{B_*}|u-c|^p\,d\mu+r_B^p
 \sum_{\genfrac{}{}{0pt}{}{P\in\mathcal T}{P\cap B\neq\emptyset}}
 \sum_{\genfrac{}{}{0pt}{}{P'\in\mathcal T}{P'\sim P}}
 \frac{\mu(P)}{r_P^p}|a_P-a_{P'}|^p.
\end{align}
Finally, taking $c=u_{B_*}$ and applying the weak
$(p,p)$-Poincar\'e inequality with upper gradient $h_u$ to the first
term on the right-hand side of \eqref{eq:volume-leaf-oscillation-integral},
we obtain \eqref{eq:volume-balanced-cover} by \eqref{eq:h-g} and
\eqref{eq:volume-leaf-upper-gradient-integral}.
This completes the proof of Lemma~\ref{lem:volume-balanced-cover}.
\end{proof}

We next recall the refinement construction used in the second estimate.
For a finite leaf partition $\mathcal P$ and $Q\in\mathcal P$, write
\[
 \mathsf{Ref}_Q(\mathcal P)
 :=(\mathcal P\setminus\{Q\})\cup\mathcal D_1(Q).
\]
The refinement
$\mathcal P\mapsto \mathsf{Ref}_Q(\mathcal P)$
is called a {\em balancing refinement} if there exists $R\in\mathcal P$ such that
\begin{align}\label{eq:volume-balancing-operation}
Q\sim R
\ \text{and}\
r_R\leq\delta^2r_Q.
\end{align}
Fix $k_0\in\mathbb Z$, $J\in\mathbb Z_+$, and a finite nonempty
$\mathcal T_0\subset\mathcal D_{k_0}$.
For $j\in\{1,\ldots,J\}$, let $P_j$ be any leaf of
$\mathcal T_{j-1}$, called the \emph{marked leaf}.
We obtain $\mathcal T_j$ from
$\mathsf{Ref}_{P_j}(\mathcal T_{j-1})$ by repeatedly applying any
balancing refinement \eqref{eq:volume-balancing-operation} available
in the current partition until none is possible.
Note that, by definition, balancing refinements do not decrease the
minimum leaf radius. Thus, balancing at any step terminates after
finitely many refinements, with $\mathcal T_j$ balanced.
Figure~\ref{fig:volume-refinement-process} illustrates this construction.

\begin{figure}[!htbp]
\centering
\definecolor{markedleaf}{HTML}{FF0000}
\resizebox{\textwidth}{!}{%
\begin{tikzpicture}[x=1cm,y=1cm,font=\small]
\node[inner sep=0pt,font=\small] at (7.5,5.65) {$U=[0,2)^2,\quad\delta=1/2,\quad r_Q=\text{side length of }Q$};
\draw[draw=black,line width=0.9pt] (0.1,1.4) rectangle (3.1,4.4);
\draw[line width=0.7pt] (1.6,1.4)--(1.6,4.4);
\draw[line width=0.7pt] (0.1,2.9)--(3.1,2.9);
\draw[line width=0.55pt] (0.85,1.4)--(0.85,2.9);
\draw[line width=0.55pt] (0.1,2.15)--(1.6,2.15);
\draw[draw=markedleaf,line width=1pt] (0.85,2.15) rectangle (1.6,2.9);
\node[inner sep=0pt,font=\small,text=markedleaf] at (1.225,2.525) {$P_j$};
\node[inner sep=0pt,font=\small] at (0.85,3.65) {$Q_1$};
\node[inner sep=0pt,font=\small] at (2.35,3.65) {$Q_2$};
\node[inner sep=0pt,font=\small] at (2.35,2.15) {$Q_3$};
\node[inner sep=0pt,font=\small] at (1.6,4.85) {$\mathcal T_{j-1}$};
\node[inner sep=0pt,font=\small] at (1.6,1.02) {balanced};
\draw[->,black,line width=0.9pt] (3.22,2.9)--(3.88,2.9);
\draw[draw=black,line width=0.9pt] (4,1.4) rectangle (7,4.4);
\draw[line width=0.7pt] (5.5,1.4)--(5.5,4.4);
\draw[line width=0.7pt] (4,2.9)--(7,2.9);
\draw[line width=0.55pt] (4.75,1.4)--(4.75,2.9);
\draw[line width=0.55pt] (4,2.15)--(5.5,2.15);
\draw[line width=0.5pt] (5.125,2.15)--(5.125,2.9);
\draw[line width=0.5pt] (4.75,2.525)--(5.5,2.525);
\draw[draw=blue,line width=1pt] (4,2.9) rectangle (5.5,4.4);\draw[draw=markedleaf,line width=1.05pt] (5.125,2.525) rectangle (5.5,2.9);
\node[inner sep=0pt,font=\scriptsize,text=markedleaf] at (5.3125,2.7125) {$R$};
\node[inner sep=0pt,font=\small,text=blue] at (4.75,3.65) {$Q_1$};
\node[inner sep=0pt,font=\small] at (6.25,3.65) {$Q_2$};
\node[inner sep=0pt,font=\small] at (6.25,2.15) {$Q_3$};
\node[inner sep=0pt,font=\small] at (5.5,4.85) {$\mathsf{Ref}_{P_j}(\mathcal T_{j-1})$};
\node[inner sep=0pt,font=\small] at (5.5,1.02) {not balanced};
\draw[->,black,line width=0.9pt] (7.12,2.9)--(7.78,2.9);
\draw[draw=black,line width=0.9pt] (7.9,1.4) rectangle (10.9,4.4);
\draw[line width=0.7pt] (9.4,1.4)--(9.4,4.4);
\draw[line width=0.7pt] (7.9,2.9)--(10.9,2.9);
\draw[line width=0.55pt] (8.65,1.4)--(8.65,2.9);
\draw[line width=0.55pt] (7.9,2.15)--(9.4,2.15);
\draw[line width=0.55pt] (8.65,2.9)--(8.65,4.4);
\draw[line width=0.55pt] (7.9,3.65)--(9.4,3.65);
\draw[line width=0.5pt] (9.025,2.15)--(9.025,2.9);
\draw[line width=0.5pt] (8.65,2.525)--(9.4,2.525);
\draw[draw=blue,line width=1pt] (9.4,2.9) rectangle (10.9,4.4);\draw[draw=markedleaf,line width=1.05pt] (9.025,2.525) rectangle (9.4,2.9);
\node[inner sep=0pt,font=\scriptsize,text=markedleaf] at (9.2125,2.7125) {$R$};
\node[inner sep=0pt,font=\small,text=blue] at (10.15,3.65) {$Q_2$};
\node[inner sep=0pt,font=\small] at (10.15,2.15) {$Q_3$};
\node[inner sep=0pt,font=\small] at (9.4,4.85) {one balancing refinement};
\node[inner sep=0pt,font=\small] at (9.4,1.02) {not balanced};
\draw[->,black,line width=0.9pt] (11.02,2.9)--(11.68,2.9);
\node[inner sep=0pt,font=\small] at (11.35,3.3) {$\cdots$};
\draw[draw=black,line width=0.9pt] (11.8,1.4) rectangle (14.8,4.4);
\draw[line width=0.7pt] (13.3,1.4)--(13.3,4.4);
\draw[line width=0.7pt] (11.8,2.9)--(14.8,2.9);
\draw[line width=0.55pt] (12.55,1.4)--(12.55,2.9);
\draw[line width=0.55pt] (11.8,2.15)--(13.3,2.15);
\draw[line width=0.55pt] (12.55,2.9)--(12.55,4.4);
\draw[line width=0.55pt] (11.8,3.65)--(13.3,3.65);
\draw[line width=0.55pt] (14.05,2.9)--(14.05,4.4);
\draw[line width=0.55pt] (13.3,3.65)--(14.8,3.65);
\draw[line width=0.55pt] (14.05,1.4)--(14.05,2.9);
\draw[line width=0.55pt] (13.3,2.15)--(14.8,2.15);
\draw[line width=0.5pt] (12.925,2.15)--(12.925,2.9);
\draw[line width=0.5pt] (12.55,2.525)--(13.3,2.525);
\draw[draw=markedleaf,line width=1.05pt] (12.925,2.525) rectangle (13.3,2.9);
\node[inner sep=0pt,font=\scriptsize,text=markedleaf] at (13.1125,2.7125) {$R$};
\node[inner sep=0pt,font=\small] at (13.3,4.85) {$\mathcal T_j$};
\node[inner sep=0pt,font=\small] at (13.3,1.02) {balanced};
\node[inner sep=0pt,font=\small] at (7.5,0.5) {The square $P_j$ is the marked leaf; blue squares are selected for balancing.};
\node[inner sep=0pt,font=\small] at (7.5,-0.25) {$r_{P_j}=1/2,\quad r_R=1/4=\delta^2r_{Q_i},\quad Q_i\sim R,\quad i\in\{1,2,3\}$};
\end{tikzpicture}%
}
\caption{A dyadic-square example in $\mathbb R^2$ with the Euclidean
metric and $A_{\rm n}\geq4$.
The third panel shows the balancing refinement of $Q_1$ under
\eqref{eq:volume-balancing-operation}.
The last arrow includes the remaining balancing refinements of
$Q_2$ and $Q_3$.
The final leaves have radii $1/2$ or $1/4$, so the
partition is balanced. The figure illustrates the additional refinements caused by a single
mark. Neighboring cubes in the sense of
\eqref{eq:dyadic-neighbors} need not share a face.}
\label{fig:volume-refinement-process}
\end{figure}

The following lemma estimates
$\sum_{K\in\mathcal T_J}\mu(K)\phi(K)^{p\gamma}$
in terms of the initial cubes and the marked leaves.
The proof uses the lower envelope $L_{\mathcal P}$ defined in
\eqref{eq:volume-closure-envelope} below, which is unchanged by
balancing refinements and can decrease only when a marked leaf is
refined. For a fixed marked leaf $P_j$, the weights in \eqref{eq:w_j}
are nonzero for only boundedly many final leaves $K$ at a given
scale. Summing the factors $\frac{r_{P_j}}{r_K}$ over the scales
gives a convergent geometric series that bounds
$\sum_{K\in\mathcal T_J}w_j(K)$.

\begin{lemma}
\label{lem:volume-weighted-linear-closure}
Let $p$, $(X,\rho,\mu)$, $\phi$, and $\gamma$ be as in
Proposition~\ref{prop:volume-negative-poincare}.
For the preceding construction and any choices of the marked leaves
$P_j$ and balancing refinements,
\begin{align}\label{eq:volume-weighted-linear-closure}
 \sum_{K\in\mathcal T_J}\mu(K)\phi(K)^{p\gamma}
 \lesssim
 \sum_{I\in\mathcal T_0}\mu(I)\phi(I)^{p\gamma}
 +\sum_{j=1}^J\mu(P_j)\phi(P_j)^{p\gamma},
\end{align}
where the implicit positive constant depends only on $p$, $\gamma$,
and the geometric constants of $X$.
\end{lemma}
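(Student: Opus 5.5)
The plan is to charge the weight $w(K):=\mu(K)\phi(K)^{p\gamma}$ of each final leaf $K\in\mathcal T_J$ either to an initial cube of $\mathcal T_0$ or to marked leaves $P_j$ lying close to $K$ at smaller scales. The charges to each $P_j$ should then form a geometric series.

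\textbf{Step 1: monotonicity of the weights.} First we record that in the stated ranges the weights are almost monotone \emph{increasing} under refinement. For $\phi=\rho$ and $K\subset P$, or more generally $K$ contained in $CB(P)$ with $r_K\le r_P$, the doubling condition and \eqref{eq:volume-upper-growth} give $\mu(K)\gtrsim (r_K/r_P)^D\mu(P)$. Hence
\[
w(K)\gtrsim (r_K/r_P)^{D+p\gamma}w(P)\gtrsim w(P),
\]
because $D+p\gamma\le0$. For $\phi=V$ we have $w=\mu^{1+p\gamma}$ with $1+p\gamma\le0$, and $\mu(K)\lesssim\mu(P)$ for such $K$. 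Consequently, a small cube sitting next to a larger one carries at least comparable weight, so the larger cube's weight can always be dominated by that of a nearby smaller leaf.

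\textbf{Step 2: the lower envelope.} Next we introduce, for a finite leaf partition $\mathcal P$, an envelope such as
\[
L_{\mathcal P}(x):=\min_{P'\in\mathcal P}\Bigl(r_{P'}+\kappa\operatorname{dist}(x,P')\Bigr)
\]
for a small fixed $\kappa$, adjusted so that it matches the neighbor relation \eqref{eq:dyadic-neighbors}. We verify the following.
\begin{enumerate}[(a)]
\item A balancing refinement \eqref{eq:volume-balancing-operation} of $Q$ does not change $L_{\mathcal P}$. Indeed, the neighbor $R$ with $r_R\le\delta^2r_Q$ already forces $L_{\mathcal P}\le \delta r_Q$ on $Q$, provided $A_{\rm n}$ is large and $\kappa$ is chosen compatibly.
\item Only refining a marked leaf can lower $L$.
\item In a balanced partition, every leaf $K$ satisfies $L_{\mathcal T_J}\sim r_K$ on $K$; this follows from \eqref{eq:neighbor-r} by chaining through neighbors.
\end{enumerate}
From (a)--(c), every final leaf $K$ that is a strict descendant of an initial cube admits a marked leaf $P_j$ with $r_{P_j}\lesssim r_K$ and $\operatorname{dist}(K,P_j)\lesssim r_K$: the envelope value $\sim r_K$ on $K$ must have been created by some marked refinement. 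Final leaves that are not strict descendants are themselves initial cubes, and their weights are charged directly to the first sum on the right-hand side of \eqref{eq:volume-weighted-linear-closure}.

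\textbf{Step 3: charging.} We then define the charge
\[
w_j(K):=\frac{r_{P_j}}{r_K}\,\mu(P_j)\phi(P_j)^{p\gamma}
\]
whenever $P_j$ lies within distance $C r_K$ of $K$ and $r_{P_j}\le r_K$, and $w_j(K):=0$ otherwise. For fixed $j$ and a fixed scale $r_K=\delta^{-m}r_{P_j}$, the disjointness of final leaves and the geometric doubling bound \eqref{eq:overlap-bound} leave only boundedly many admissible $K$. Hence
\[
\sum_{K\in\mathcal T_J}w_j(K)\lesssim\sum_{m\ge0}\delta^m\,w(P_j)\lesssim w(P_j),
\]
and summing over $j$ gives the second term of \eqref{eq:volume-weighted-linear-closure}.

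\textbf{Step 4: the main obstacle.} It remains to show
\[
w(K)\lesssim\sum_j w_j(K)+\text{(initial terms)}.
\]
At the endpoint exponents ($\gamma=-D/p$ or $\gamma=-1/p$), Step 1 only yields $w(K)\lesssim w(P_j)$ without the decaying factor $r_{P_j}/r_K$. So this step requires an amortization over time rather than a pointwise comparison. Each refinement of a cube $Q$ (marked or balancing) replaces $w(Q)$ by $\sum_{Q'\in\mathcal D_1(Q)}w(Q')$, which by doubling is $\lesssim w(Q)$, with a constant independent of the step.

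I would track the potential $\sum_{K\in\mathcal P}w(K)$ through the process. We want to show that the cascade of balancing refinements triggered by refining $P_j$ affects only cubes $Q$ whose envelope-defining marked leaf is $P_j$, with $\operatorname{dist}(Q,P_j)\lesssim r_Q$. At each scale there are boundedly many such $Q$, each with $w(Q)\lesssim w(P_j)$. The factor $r_{P_j}/r_Q$ must then be extracted from the fact that the envelope grows linearly away from $P_j$. Concretely, the cubes balanced at scale $r_Q$ form a ring whose measure fraction relative to $B(x_{P_j},Cr_Q)$ should be estimated using the reverse doubling/upper dimension bounds. If this direct extraction fails at the endpoint, I would fall back on redistributing each $w(Q)$ among the $\sim r_Q/r_{P_j}$ intermediate scales along the chain from $Q$ down to $P_j$, so that each marked leaf receives a bounded total.
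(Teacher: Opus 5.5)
Your framework is the right one: an envelope that balancing refinements leave unchanged, charges $r_{P_j}/r_K$ whose column sums form a geometric series, and the comparison $w(K)\lesssim w(P_j)$ when $P_j$ lies within $Cr_K$ of $K$ with $r_{P_j}\lesssim r_K$. But there is a genuine gap at Step~4, which is the heart of the lemma, and you leave it open.

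The mechanisms you suggest do not close it. Tracking the potential $\sum_{K\in\mathcal P}w(K)$ does not help by itself, because the potential grows by an amount comparable to $w(Q)$ at every refinement of a cube $Q$, marked or balancing. This only reduces the lemma to bounding the total weight of the balancing cascades, which is the original difficulty. The ring and redistribution ideas are not specified.

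More importantly, the premise of Step~4 is misdirected. The factor $r_{P_j}/r_K$ is not meant to come from comparing $w(K)$ with the weight of a single marked leaf. It comes from multiplicity: the marked leaves near $K$ have radii adding up to at least $r_K$. The conclusion you draw from (a)--(c), namely the existence of one nearby marked leaf, is too weak for this, and (c) is not needed.

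What is missing is a quantitative decrement bound for the envelope: refining a leaf $Q$ lowers $L_{\mathcal P}$ pointwise by at most $\kappa r_Q$ for a fixed $\kappa<1$. For your set-distance envelope with slope $\varepsilon$, since $\operatorname{dist}(x,S)\ge\operatorname{dist}(x,Q)$ for every child $S$ of $Q$, one gets $\kappa=1-\delta$. The paper uses $b_Q(z)=r_Q+\varepsilon\rho(z,x_Q)$ and $\kappa=1-\delta+\varepsilon$. Together with (a) and (b) this gives $0\le L_{j-1}-L_j\le\kappa r_{P_j}$. Here is how it closes Step~4.

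\begin{enumerate}[(1)]
\item Fix $K\in\mathcal T_J$ whose initial ancestor $I$ satisfies $r_I>2r_K$. Then $L_0(x_K)\ge r_I>2r_K$, while trivially $L_J(x_K)\le r_K$.
\item Let $t$ be the first step with $L_t(x_K)\le 2r_K$. By (a), any step $j\ge t$ that lowers $L(x_K)$ does so through a child $S$ of $P_j$ with $b_S(x_K)\le 2r_K$. This forces $r_{P_j}\le 2\delta^{-1}r_K$ and $\rho(x_{P_j},x_K)\lesssim r_K$.
\item Telescoping from step $t-1$ to step $J$ gives $r_K<\kappa\sum r_{P_j}$ over these steps. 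Hence $\sum_j r_{P_j}/r_K>1$ over the marked leaves admissible for $K$.
\item Therefore $w(K)\le\sum_j (r_{P_j}/r_K)\,w(K)\lesssim\sum_j (r_{P_j}/r_K)\,w(P_j)$. This uses only the non-decaying comparison from Step~1, and your column sums finish the proof.
\end{enumerate}

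For this to work you must also repair Step~3. The condition $r_{P_j}\le r_K$ excludes a marked leaf $P_j$ when $K$ is one of its children, where $r_{P_j}=\delta^{-1}r_K$. So with $J=1$ and no balancing, the children of $P_1$ receive no charge at all. Allow $r_{P_j}\le 2\delta^{-1}r_K$; the geometric series then starts at a bounded negative index. Also charge every $K$ with $r_I\le 2r_K$, not only $K\in\mathcal T_0$, to its initial ancestor $I$.
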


\begin{proof}
Choose $\varepsilon\in(0,\delta)$, depending only on the geometric
constants of $X$, sufficiently small such that
\begin{equation}\label{eq:volume-closure-epsilon}
 \delta(1+\varepsilon)\leq1\ \text{and}\
 \delta^2+\varepsilon[A_{\rm n}(1+\delta^2)+1]\leq\delta,
\end{equation}
and let $\kappa:=1-\delta+\varepsilon\in(0,1)$.
Let $\mathcal P$ be any finite dyadic leaf partition of
$\bigcup_{I\in\mathcal T_0}I$ as in \eqref{eq:leaf-union}.
For $Q\in\mathcal D$, define
\begin{equation}\label{eq:volume-closure-envelope}
 b_Q(z):=r_Q+\varepsilon\rho(z,x_Q)\ \text{and}\
 L_{\mathcal P}(z):=\min_{Q\in\mathcal P}b_Q(z)
 \quad(z\in X).
\end{equation}
Fix $Q\in\mathcal P$, and let $S_*\in\mathcal D_1(Q)$ contain $x_Q$.
Clearly,
$\rho(x_S,x_Q)<r_Q$ for any $S\in\mathcal D_1(Q)$ and
$\rho(x_{S_*},x_Q)<r_{S_*}$.
From \eqref{eq:dyadic-descendants},
\eqref{eq:volume-closure-epsilon}, and
\eqref{eq:volume-closure-envelope}, we infer that, for any
$S\in\mathcal D_1(Q)$ and $z\in X$,
\begin{align}\label{eq:ref-child}
 b_S(z)\geq b_Q(z)-\kappa r_Q\ \text{and}\
 b_{S_*}(z)\leq
 \delta(1+\varepsilon)r_Q+\varepsilon\rho(z,x_Q)\leq b_Q(z).
\end{align}
Since $\mathcal P\setminus\{Q\}\subset\mathsf{Ref}_Q(\mathcal P)$
and $b_{S_*}\leq b_Q$, it follows from
\eqref{eq:volume-closure-envelope} and \eqref{eq:ref-child} that
$L_{\mathsf{Ref}_Q(\mathcal P)}\leq L_{\mathcal P}$.
By \eqref{eq:ref-child} and \eqref{eq:volume-closure-envelope},
we obtain $b_S\geq L_{\mathcal P}-\kappa r_Q$ for any
$S\in\mathcal D_1(Q)$, while
$b_R\geq L_{\mathcal P}$ for any
$R\in\mathcal P\setminus\{Q\}$. Thus,
\begin{equation}\label{eq:volume-closure-decrement}
 0\leq L_{\mathcal P}-L_{\mathsf{Ref}_Q(\mathcal P)}
 \leq\kappa r_Q.
\end{equation}

Now assume that $\mathsf{Ref}_Q(\mathcal P)$ is a balancing
refinement, and let $R\in\mathcal P$ satisfy
\eqref{eq:volume-balancing-operation}.
From \eqref{eq:dyadic-neighbors} and
\eqref{eq:volume-closure-epsilon}, we deduce that, for any
$S\in\mathcal D_1(Q)$,
\[
 r_R+\varepsilon\rho(x_R,x_S)
 \leq
 \left\{\delta^2+\varepsilon[A_{\rm n}(1+\delta^2)+1]\right\}r_Q
 \leq\delta r_Q=r_S.
\]
By \eqref{eq:volume-closure-envelope}, we find that $b_R\leq b_S$
for any such $S$, and hence $b_R\leq b_{S_*}\leq b_Q$.
Since $R\in\mathcal P\setminus\{Q\}\subset\mathsf{Ref}_Q(\mathcal P)$,
it follows from \eqref{eq:volume-closure-envelope} that
\begin{equation}\label{eq:volume-closure-balance-invariance}
 L_{\mathsf{Ref}_Q(\mathcal P)}=L_{\mathcal P}.
\end{equation}
Let $L_j:=L_{\mathcal T_j}$ for $j\in\{0,\ldots,J\}$.
Using \eqref{eq:volume-closure-decrement} and
\eqref{eq:volume-closure-balance-invariance} in the construction of
$\mathcal T_j$, we obtain
\begin{equation}\label{eq:volume-closure-mark-decrement}
 0\leq L_{j-1}(z)-L_j(z)\leq\kappa r_{P_j},
 \quad j\in\{1,\ldots,J\},\quad z\in X.
\end{equation}

Let $C_{\rm r}:=\frac{2}{\delta}$ and
$A_{\rm r}:=2(\varepsilon^{-1}+\delta^{-1})$.
For $K\in\mathcal T_J$, $j\in\{1,\ldots,J\}$, and
$I\in\mathcal T_0$, define
\begin{align}\label{eq:w_j}
 w_j(K):=
 \begin{cases}\displaystyle
 \frac{r_{P_j}}{r_K}&\text{if }r_{P_j}\leq C_{\rm r}r_K
 \text{ and }\rho(x_{P_j},x_K)\leq A_{\rm r}r_K,\\
 0&\text{otherwise}
 \end{cases}
\end{align}
and
\begin{align}\label{eq:w_0}
 w_0(K,I):=
 \mathbf1_{\{r_K\leq r_I\leq2r_K,\
 \rho(x_I,x_K)\leq2r_K\}}.
\end{align}
We claim that, for any $K\in\mathcal T_J$,
\begin{equation}\label{eq:volume-closure-row}
 \sum_{I\in\mathcal T_0}w_0(K,I)+\sum_{j=1}^Jw_j(K)\geq1.
\end{equation}
Indeed,
let $I\in\mathcal T_0$ be the initial ancestor of $K$.
From \eqref{eq:dyadic-ball} and \eqref{eq:dyadic-radii}, it follows that
$\rho(x_I,x_K)<r_I$ and $r_K\leq r_I$.
If $r_I\leq2r_K$, then \eqref{eq:w_0} gives $w_0(K,I)=1$,
which proves \eqref{eq:volume-closure-row} in this case.
Assume that $r_I>2r_K$.
Since the cubes in $\mathcal T_0$ have the same radius, it follows from
\eqref{eq:volume-closure-envelope} that $L_0(x_K)\geq r_I>2r_K$
and $L_J(x_K)\leq r_K$.
By \eqref{eq:volume-closure-mark-decrement}, we find that
$t:=\min\{j\in\{1,\ldots,J\}:L_j(x_K)\leq2r_K\}$ is well defined
and $L_j(x_K)\leq2r_K$ for $j\in\{t,\ldots,J\}$.
For any $j\in\{t,\ldots,J\}$ with $L_j(x_K)<L_{j-1}(x_K)$,
\eqref{eq:volume-closure-balance-invariance} and
\eqref{eq:volume-closure-envelope} give
$L_{\mathsf{Ref}_{P_j}(\mathcal T_{j-1})}(x_K)
=L_j(x_K)=b_S(x_K)\leq2r_K$
for some $S\in\mathcal D_1(P_j)$.
From this and
\eqref{eq:volume-closure-envelope}, we infer that
$r_{P_j}=\delta^{-1}r_S\leq C_{\rm r}r_K$ and
$\rho(x_{P_j},x_K)
 \leq r_{P_j}+\rho(x_S,x_K)\leq A_{\rm r}r_K.$
Thus, \eqref{eq:w_j} gives $w_j(K)=\frac{r_{P_j}}{r_K}$.
Using \eqref{eq:volume-closure-mark-decrement}, we obtain
$L_{j-1}(x_K)-L_j(x_K)\leq\kappa r_Kw_j(K)$ for any
$j\in\{t,\ldots,J\}$.
By summing over $j\in\{t,\ldots,J\}$ and using
$L_{t-1}(x_K)>2r_K$ and $L_J(x_K)\leq r_K$, we conclude that
\[
 r_K<L_{t-1}(x_K)-L_J(x_K)
 =\sum_{j=t}^J[L_{j-1}(x_K)-L_j(x_K)]
 \leq\kappa r_K\sum_{j=t}^Jw_j(K).
\]
Since $\kappa\in(0,1)$, it follows that
$\sum_{j=1}^Jw_j(K)\geq\sum_{j=t}^Jw_j(K)>1$,
which proves \eqref{eq:volume-closure-row} in this case.

Choose $m_0\in\mathbb Z_+$ such that
$C_{\rm r}\leq\delta^{-m_0}$.
Fix $j\in\{1,\ldots,J\}$.
By \eqref{eq:w_j} and \eqref{eq:dyadic-radii}, we find that any
$K\in\mathcal T_J$ with $w_j(K)\neq0$ satisfies
$r_K=\delta^{-m}r_{P_j}$ for a unique
$m\in\mathbb Z\cap[-m_0,\infty)$.
For fixed $m\in\mathbb Z\cap[-m_0,\infty)$,
\eqref{eq:w_j} gives
$\rho(x_K,x_{P_j})\leq A_{\rm r}\delta^{-m}r_{P_j}$
for any $K\in\mathcal T_J$ with $w_j(K)\neq0$ and
$r_K=\delta^{-m}r_{P_j}$.
From the pairwise disjoint inner balls in \eqref{eq:dyadic-ball}
and the geometric doubling condition, we deduce that
\[
 \sharp\left\{K\in\mathcal T_J:w_j(K)\neq0,\
 r_K=\delta^{-m}r_{P_j}\right\}\lesssim1.
\]
This further implies that
\[
 \sum_{K\in\mathcal T_J}w_j(K)
 =\sum_{m=-m_0}^{\infty}\delta^m
 \sharp\left\{K\in\mathcal T_J:w_j(K)\neq0,\
 r_K=\delta^{-m}r_{P_j}\right\}.
\]
Thus,
\begin{align}\label{eq:volume-closure-column}
 \sum_{K\in\mathcal T_J}w_j(K)
 \lesssim\sum_{m=-m_0}^{\infty}\delta^m\lesssim1.
\end{align}
For fixed $I\in\mathcal T_0$, \eqref{eq:w_0} gives
$r_K\in[\frac{r_I}{2},r_I]$ and
$\rho(x_K,x_I)\leq2r_I$ whenever $w_0(K,I)\neq0$.
Using this and the geometric doubling condition again, we obtain
\begin{align}\label{eq:volume-2}
 \sum_{K\in\mathcal T_J}w_0(K,I)\lesssim1,
\end{align}
where the implicit positive constants in
\eqref{eq:volume-closure-column} and \eqref{eq:volume-2}
depend only on the geometric constants of $X$.

When $\phi=V$, from \eqref{eq:w_j}, \eqref{eq:w_0},
\eqref{eq:dyadic-ball}, and the doubling condition, it follows that
$\mu(P_j)\lesssim\mu(K)$ when $w_j(K)\neq0$ and
$\mu(I)\lesssim\mu(K)$ when $w_0(K,I)\neq0$.
Since $1+p\gamma\leq0$, it follows that
$\mu(K)\phi(K)^{p\gamma}\lesssim\mu(P_j)\phi(P_j)^{p\gamma}$
and $\mu(K)\phi(K)^{p\gamma}\lesssim\mu(I)\phi(I)^{p\gamma}$,
respectively.
When $\phi=\rho$, using \eqref{eq:volume-upper-growth} and
\eqref{eq:w_j}, we obtain, if $w_j(K)\neq0$,
\[
 \frac{\mu(K)\phi(K)^{p\gamma}}
 {\mu(P_j)\phi(P_j)^{p\gamma}}
 \lesssim\left(\frac{r_K}{r_{P_j}}\right)^{D+p\gamma}
 \lesssim1,
\]
where the last inequality follows from $D+p\gamma\leq0$ and
$r_K\geq C_{\rm r}^{-1}r_{P_j}$.
From \eqref{eq:w_0} and the doubling condition, it also follows that,
if $w_0(K,I)\neq0$,
\[
 \frac{\mu(K)\phi(K)^{p\gamma}}
 {\mu(I)\phi(I)^{p\gamma}}
 \sim\left(\frac{r_K}{r_I}\right)^{p\gamma}\lesssim1.
\]
Using these and \eqref{eq:volume-closure-row}, we obtain,
for any $K\in\mathcal T_J$,
\begin{align*}
 \mu(K)\phi(K)^{p\gamma}
 &\leq\mu(K)\phi(K)^{p\gamma}
 \left[\sum_{I\in\mathcal T_0}w_0(K,I)+\sum_{j=1}^Jw_j(K)\right]\\
 &\lesssim
 \sum_{I\in\mathcal T_0}w_0(K,I)\mu(I)\phi(I)^{p\gamma}+\sum_{j=1}^Jw_j(K)\mu(P_j)\phi(P_j)^{p\gamma}.
\end{align*}
By summing over $K\in\mathcal T_J$, interchanging the finite sums,
and using \eqref{eq:volume-closure-column} and \eqref{eq:volume-2},
we conclude that \eqref{eq:volume-weighted-linear-closure} holds.
This completes the proof of Lemma~\ref{lem:volume-weighted-linear-closure}.
\end{proof}

\subsection{Poincar\'e Inequalities for Metric and Volume Functionals}
\label{subsec:volume-negative}

In this subsection, we use the estimates from
Subsection~\ref{subsec:dyadic-refinement} to prove
Proposition~\ref{prop:volume-negative-poincare}.
We first treat bounded functions and then use truncations to obtain
the inequality for arbitrary measurable functions.

\begin{lemma}\label{lem:volume-two-region}
Let $p$, $(X,\rho,\mu)$, $\phi$, and $\gamma$ be as in
Proposition~\ref{prop:volume-negative-poincare}, and let
$\kappa\in(0,\frac12)$. Then there exists $\Lambda_0\in[1,\infty)$
such that, for any ball $B$ with its radius $R\in(0,3\operatorname{diam} X)$ and
for any measurable function $g:X\to[0,t]$ with $t\in(0,\infty)$,
if
\begin{equation}\label{eq:volume-two-region-measure-assumption}
 \min\left\{\mu\left(B\cap\left\{g\leq\frac{t}{2}\right\}\right),
             \mu\left(B\cap\left\{g=t\right\}\right)\right\}
 \geq\kappa\mu(B),
\end{equation}
then
\begin{equation}\label{eq:volume-two-region}
 t^p\mu(B)
 \lesssim R^p\mu(B)\phi(B)^{p\gamma}
 +R^p\mathcal E^\phi_{1,\gamma,p}(g;\Lambda_0B),
\end{equation}
where $\Lambda_0$ and the implicit positive constant depend only on
$\kappa$, $p$, $\gamma$, and the geometric constants of $X$.
\end{lemma}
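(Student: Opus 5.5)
The plan is to reduce to the case where $t$ is large compared with the natural threshold at scale $R$, and then run the adaptive dyadic argument of Subsection~\ref{subsec:dyadic-refinement}, with medians of $g$ as the discrete coefficients. If $t\leq C_0R\phi(B)^\gamma$ for a large constant $C_0$, then \eqref{eq:volume-two-region} is trivial, so we assume $t>C_0R\phi(B)^\gamma$. For a dyadic cube $P$ write $\tau(P):=r_P\phi(P)^\gamma$. In the ranges of Proposition~\ref{prop:volume-negative-poincare}, namely $\gamma\leq-D/p$ for $\phi=\rho$ and $\gamma\leq-1/p$ for $\phi=V$, we have $\tau(P')\gtrsim\tau(P)$ whenever $P'\subset P$. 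We call $P$ \emph{fine} if $\tau(P)>c_\ast t$ and \emph{coarse} otherwise. For a fine $P$ we have
\[
 t^p\mu(P)\lesssim r_P^p\mu(P)\phi(P)^{p\gamma}\leq R^p\mu(P)\phi(P)^{p\gamma}
 \quad\text{whenever }r_P\lesssim R .
\]

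Next we fix the initial family and the marking rule. Let $\mathcal T_0$ consist of the cubes of one generation with $r\sim R$ meeting $C'B$, where $C'$ is the constant $C$ of Lemma~\ref{lem:volume-balanced-cover} enlarged by $A_{\rm n}$. Fix a small $\varepsilon=\varepsilon(\kappa)$. A leaf $P$ is \emph{mixed} if $m_P^{\varepsilon}(g)-m_P^{1-\varepsilon}(g)\geq t/8$. We run the construction before Lemma~\ref{lem:volume-weighted-linear-closure}: at each step we mark a coarse mixed leaf, refine it, and then balance. We stop when no coarse mixed leaf remains. We may assume $\mathcal E^\phi_{1,\gamma,p}(g;\Lambda_0B)<\infty$; the energy bound below then shows that only finitely many marks occur, so the process terminates at a balanced $\mathcal T_J$.

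The upper bound proceeds in three steps.
\begin{enumerate}[(a)]
\item Apply Lemma~\ref{lem:volume-balanced-cover} to $\mathcal T_J$ with $a_P:=a_P(g)$. Since $|a_P-a_{P'}|\leq t$, each neighbor term $\mu(P)r_P^{-p}|a_P-a_{P'}|^p$ is handled in one of three ways. If $P$ is fine, it is $\lesssim\mu(P)\phi(P)^{p\gamma}$. If $|a_P-a_{P'}|\leq c\,\tau(P)$, it is again $\lesssim\mu(P)\phi(P)^{p\gamma}$. Otherwise the jump exceeds $c\,\tau(P)$. In that case, by \eqref{eq:volume-quantile-mass} and the balance \eqref{eq:neighbor-r}, the pairs $(x,y)\in P\times P'$ with $g(x)\leq a_P$ and $g(y)\geq a_{P'}$ (or the reverse) satisfy $|g(x)-g(y)|>\rho(x,y)\phi(x,y)^\gamma$ once $c$ is large. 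These pairs have measure $\gtrsim\mu(P)^2$ and weight $\sim\phi(P)^{p\gamma}/\mu(P)$, so the term is bounded by $r_P^{-p}$ times the energy on $P\times P'$. By \eqref{eq:overlap-bound} these local energies sum to $\lesssim\mathcal E^\phi_{1,\gamma,p}(g;\Lambda_0B)$ times $R^{-p}$-type factors.
\item Bound the remaining sum $\sum_{K\in\mathcal T_J}\mu(K)\phi(K)^{p\gamma}$ by Lemma~\ref{lem:volume-weighted-linear-closure}. This gives the initial cubes, contributing $\lesssim\mu(B)\phi(B)^{p\gamma}$, plus the marked cubes.
\item Show that each marked (coarse, mixed) $P_j$ satisfies $\mu(P_j)\phi(P_j)^{p\gamma}\lesssim\mathcal E^\phi_{1,\gamma,p}(g;P_j')$ for a bounded enlargement $P_j'$ with bounded overlap over $j$.
\end{enumerate}

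For the lower bound, the right constant $c$ in \eqref{eq:volume-balanced-cover} is controlled as follows. A non-mixed leaf has $g$ within $t/8$ of $a_P$ outside a set of measure $2\varepsilon\mu(P)$, by \eqref{eq:volume-quantile-concentration}. Hypothesis \eqref{eq:volume-two-region-measure-assumption} then forces non-mixed leaves meeting $B$ with $a_P\leq 5t/8$ and with $a_P\geq 7t/8$ each to carry mass $\gtrsim\kappa\mu(B)$. The only exceptions are the mixed leaves, which are all fine, and the $2\varepsilon$-sets; the total mass of both is small or is charged to $R^p\mu(P)\phi(P)^{p\gamma}$ via the fine-cube bound above. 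Hence $\inf_c\sum_{P\cap B\ne\emptyset}\mu(P)|a_P-c|^p\gtrsim t^p\mu(B)$ minus admissible errors, and combining with (a)--(c) gives \eqref{eq:volume-two-region}.

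The main obstacle is step (c). Inside a coarse mixed cube, the low set $\{g\leq m^{1-\varepsilon}_P\}$ and the high set $\{g\geq m^\varepsilon_P\}$ each have mass $\geq\varepsilon\mu(P)$ and differ by at least $t/8\geq\tau(P)/(8c_\ast)$. However, the threshold $\rho\phi^\gamma$ blows up at short distances, so only pairs at mutual distance $\sim r_P$ may be counted. To extract these, I would use descendants of $P$ a fixed number $m$ of generations down. If some such descendant contains both low and high sets with proportion $\gtrsim\varepsilon$, then we descend further. Since $\tau$ only changes by bounded factors, this is absorbed by choosing $c_\ast$ small and $m$ fixed. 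Otherwise the low and high masses lie in distinct descendants at mutual distance $\gtrsim\delta^m r_P$, in the spirit of Lemma~\ref{lem:witness-ball}. These pairs have $\rho\sim r_P$, $\phi(x,y)\sim\phi(P)$, and measure $\gtrsim\varepsilon^2\mu(P)^2$. This gives energy $\gtrsim\mu(P)\phi(P)^{p\gamma}$ on $P\times P$, and therefore bounded overlap of the enlargements $P_j'$ across distinct marked cubes.
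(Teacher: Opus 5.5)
Your step (a) has a genuine gap in the case $|a_P-a_{P'}|>c\,\tau(P)$.

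The pairs you exhibit in $P\times P'$ do lie in the superlevel set. After discarding near-diagonal pairs, however, they contribute only $\gtrsim\mu(P)\phi(P)^{p\gamma}=r_P^{-p}\mu(P)\tau(P)^p$ to $\mathcal E^\phi_{1,\gamma,p}$, whatever the size of the jump. The functional records only whether $|g(x)-g(y)|$ exceeds $\rho(x,y)\phi(x,y)^\gamma$, not by how much. The term in \eqref{eq:volume-balanced-cover} is $r_P^{-p}\mu(P)|a_P-a_{P'}|^p$, so you are short by the factor $(|a_P-a_{P'}|/\tau(P))^p$.

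Your stopping rule does not bound this ratio. Adjacent coarse non-mixed leaves have internal spread below $t/8$, but their medians may differ by anything up to order $t$, while coarseness only gives $\tau(P)\le c_\ast t$. Your written bound ``$r_P^{-p}$ times the energy on $P\times P'$'' is also off by the factor $r_P^{-p}$. What you would actually need is $\mu(P)|a_P-a_{P'}|^p\lesssim r_P^p$ times a local energy, which is a Poincar\'e inequality at the scale of $P$, i.e.\ essentially the statement you are proving.

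The missing idea is the paper's second marking condition \eqref{eq:volume-neighbor-marking}: a leaf is also marked when its median differs from that of a neighbor by more than $A_0r_P\phi(P)^\gamma$. The paper uses the same scale-dependent threshold in the internal condition \eqref{eq:volume-internal-marking}. The energy then only has to pay for the weight $\mu(P_j)\phi(P_j)^{p\gamma}$ of each marked cube, never for the size of a jump. At termination all neighbor jumps are at most $A_0r_P\phi(P)^\gamma$, so the right-hand side of \eqref{eq:volume-balanced-cover} is $\lesssim A_0^pR^p\sum_{P\in\mathcal T}\mu(P)\phi(P)^{p\gamma}$. Lemma~\ref{lem:volume-weighted-linear-closure} then reduces this to the initial and marked weights; this is precisely why that lemma allows arbitrary marked leaves followed by balancing.

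Three smaller points also need correcting.

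First, $\tau(P')\gtrsim\tau(P)$ for $P'\subset P$ is false in part of the range. For example, for $\phi=\rho$ on $\mathbb R$ with $p=2$ and $\gamma=-\frac12=-\frac Dp$, one has $\tau(P)=r_P^{1/2}$. The quantity that is monotone under $D+p\gamma\le0$ (resp.\ $1+p\gamma\le0$) is $\mu(P)\phi(P)^{p\gamma}$. That is what gives $\mu(P_j)\phi(P_j)^{p\gamma}\gtrsim\mu(B)\phi(B)^{p\gamma}$ and hence termination.

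Second, step (c) needs no descent. By reverse doubling, $\int_PV(x,\eta r_P)\,d\mu(x)\lesssim\eta^d\mu(P)^2$. So removing the pairs with $\rho(x,y)<\eta r_P$ from the low--high product still leaves $\gtrsim\varepsilon^2\mu(P)^2$ pairs with $\rho(x,y)\sim r_P$ and $V(x,y)\sim\mu(P)$. Your dichotomy would not give separation anyway, since distinct descendants of one generation can be adjacent.

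Third, bounded overlap must be taken in $X\times X$, not for the enlargements $P_j'$. Marked cubes can be nested, so the $P_j'$ can overlap unboundedly. The witness pair sets do have bounded overlap, because $\rho(x,y)\sim r_{P_j}$ on the set attached to $P_j$ fixes boundedly many generations.

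With these repairs, your discrete lower bound via the two groups of medians goes through.
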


For the proof of Lemma~\ref{lem:volume-two-region}, we choose
$\mathcal T_0(B)$ and $U(B)$ so that any finite balanced leaf
partition of $U(B)$ satisfies the assumptions of
Lemma~\ref{lem:volume-balanced-cover}.
Let $C$ be the constant in Lemma~\ref{lem:volume-balanced-cover}.
For any ball $B$ with its
radius $R\in(0,3\operatorname{diam} X)$, choose $k_0\in\mathbb Z$ such that
\begin{equation}\label{eq:volume-fixed-root-scale}
 \frac{\delta\min\left\{R,\operatorname{diam} X\right\}}{10A_{\rm n}}
 \leq r_0:=C_1\delta^{k_0}
 <\frac{\min\left\{R,\operatorname{diam} X\right\}}{10A_{\rm n}},
\end{equation}
and define
\begin{equation}\label{eq:volume-fixed-roots}
 \mathcal T_0(B)
 :=\left\{L\in\mathcal D_{k_0}:
 L\cap(C+4A_{\rm n}+4)B\neq\emptyset\right\}
\end{equation}
and
\begin{equation}\label{eq:volume-fixed-root-union}
 U(B):=\bigcup_{L\in\mathcal T_0(B)}L.
\end{equation}
By \eqref{eq:volume-fixed-root-scale} and \eqref{eq:dyadic-ball},
we obtain
\begin{equation}\label{eq:volume-fixed-root-geometry}
 B\subset U(B)\subset(C+4A_{\rm n}+6)B.
\end{equation}
From \eqref{eq:volume-fixed-root-scale} and the doubling condition,
we deduce that, for any $L\in\mathcal T_0(B),$
\begin{equation}\label{eq:volume-fixed-root-measures}
 \mu(L)\sim\mu(B),
\end{equation}
where the implicit positive constant depends only on the geometric
constants of $X$.
By \eqref{eq:volume-fixed-root-scale}, \eqref{eq:volume-fixed-roots},
and the geometric doubling condition, we find that
$\mathcal T_0(B)$ is a finite balanced leaf partition of $U(B)$ with
$\sharp\mathcal T_0(B)\lesssim1$, where the implicit positive constant
depends only on the geometric constants of $X$.
For any finite balanced leaf partition $\mathcal T$ of $U(B)$,
by \eqref{eq:volume-fixed-root-scale}, \eqref{eq:volume-fixed-roots},
and \eqref{eq:volume-fixed-root-union}, we obtain $CB\subset U(B)$,
$r_P\leq r_0<R$ for any $P\in\mathcal T$, and
$A_{\rm n}B(P)\subset(C+2A_{\rm n})B\subset U(B)$ whenever
$P\in\mathcal T$ and $A_{\rm n}B(P)\cap CB\neq\emptyset$.
Thus, $\mathcal T$ satisfies the assumptions of
Lemma~\ref{lem:volume-balanced-cover}.

\begin{proof}[Proof of Lemma~\ref{lem:volume-two-region}]
Let $B$ be a ball with its radius $R\in(0,3\operatorname{diam} X)$, and let $t$
and $g$ satisfy the hypotheses of this lemma.
Let $\mathcal T_0:=\mathcal T_0(B)$ and $U:=U(B)$ be as in
\eqref{eq:volume-fixed-roots} and \eqref{eq:volume-fixed-root-union}.
It suffices to consider $\mathcal E^\phi_{1,\gamma,p}(g;U)<\infty$.
Fix $\varepsilon\in(0,\frac14)$, to be chosen below.
Let $A_0\in[1,\infty)$ be sufficiently large, depending only on
$\varepsilon$, $p$, $\gamma$, and the geometric constants of $X$.
Starting with $\mathcal T_0$, we construct the partitions
$\mathcal T_j$ as before Lemma~\ref{lem:volume-weighted-linear-closure},
with the following rule for choosing the marked leaves. In the
current partition $\mathcal T$, a leaf $P$ is marked if
\begin{equation}\label{eq:volume-internal-marking}
 m_P^\varepsilon(g)-m_P^{1-\varepsilon}(g)>A_0r_P\phi(P)^\gamma,
\end{equation}
or if there exists $P'\in\mathcal T$ with $P'\sim P$ such that
\begin{equation}\label{eq:volume-neighbor-marking}
 |a_P(g)-a_{P'}(g)|>A_0r_P\phi(P)^\gamma.
\end{equation}
The construction stops when no leaf satisfies either condition.
Let $J\in\mathbb Z_+$ be such that
$\mathcal T_0,\ldots,\mathcal T_J$ have been constructed, and fix
$j\in\{1,\ldots,J\}$. If $P_j$ satisfies
\eqref{eq:volume-internal-marking}, let
\[
 \mathcal C_j:=
 \left(P_j\cap\left\{g\leq m_{P_j}^{1-\varepsilon}(g)\right\}\right)
 \times
 \left(P_j\cap\left\{g\geq m_{P_j}^{\varepsilon}(g)\right\}\right).
\]
By \eqref{eq:volume-quantile-mass}, we obtain
$(\mu\times\mu)(\mathcal C_j)\geq\varepsilon^2\mu(P_j)^2.$
Otherwise, choose $P'_j\in\mathcal T_{j-1}$ with $P'_j\sim P_j$
satisfying \eqref{eq:volume-neighbor-marking}.
We may assume $a_{P_j}(g)<a_{P'_j}(g)$ because the other order is
handled similarly, and let
\[
 \mathcal C_j:=
 \left(P_j\cap\left\{g\leq a_{P_j}(g)\right\}\right)
 \times\left(P'_j\cap\left\{g\geq a_{P'_j}(g)\right\}\right).
\]
From \eqref{eq:volume-median-convention} with $\theta=\frac12$, it follows that
$\mu(P_j\cap\{g\leq a_{P_j}(g)\})\geq\frac12\mu(P_j)$ and
$\mu(P'_j\cap\{g\geq a_{P'_j}(g)\})\geq\frac12\mu(P'_j)$.
Using \eqref{eq:neighbor-r} and the doubling condition, we obtain
\[
 (\mu\times\mu)(\mathcal C_j)
 \geq\frac14\mu(P_j)\mu(P'_j)\gtrsim\mu(P_j)^2.
\]
From \eqref{eq:volume-internal-marking} in the first case and
\eqref{eq:volume-neighbor-marking} in the second case and
from the definitions
of $\mathcal C_j$ in either case, we infer that
\begin{equation}\label{eq:volume-two-region-difference}
 |g(x)-g(y)|>A_0r_{P_j}\phi(P_j)^\gamma,
 \quad (x,y)\in\mathcal C_j.
\end{equation}
By \eqref{eq:volume-reverse-growth} and the doubling condition, we obtain,
for any $\eta\in(0,1)$,
\[
 \int_{P_j}V(x,\eta r_{P_j})\,d\mu(x)
 \lesssim\eta^d\mu(P_j)^2.
\]
Choosing $\eta$ sufficiently small, depending only on $\varepsilon$
and the geometric constants of $X$, we find that
$\mathcal A_j:=\{(x,y)\in\mathcal C_j:
\rho(x,y)\geq\eta r_{P_j}\}\subset P_j\times U$ satisfies
\begin{equation}\label{eq:volume-two-region-witness}
 (\mu\times\mu)(\mathcal A_j)\gtrsim\mu(P_j)^2,
\end{equation}
\begin{equation}\label{eq:volume-two-region-distances}
 \eta r_{P_j}\leq\rho(x,y)\lesssim r_{P_j},
 \quad (x,y)\in\mathcal A_j,
\end{equation}
and
\begin{equation}\label{eq:volume-two-region-volumes}
 V(x,y)\sim\mu(P_j),\quad (x,y)\in\mathcal A_j.
\end{equation}
Using \eqref{eq:volume-two-region-distances},
\eqref{eq:volume-two-region-volumes}, and
\eqref{eq:volume-two-region-difference},
we conclude that, for $A_0$ sufficiently large,
\[
 |g(x)-g(y)|>\rho(x,y)\phi(x,y)^\gamma,
 \quad (x,y)\in\mathcal A_j.
\]
Let $x,y\in U$ be distinct.
If $(x,y)\in\mathcal A_j$, then $x\in P_j$ and, by
\eqref{eq:volume-two-region-distances}, $r_{P_j}\sim\rho(x,y)$.
Thus, only boundedly many generations of $P_j$ are possible.
Since the marked leaves $P_j$ are pairwise distinct and the cubes
of a fixed generation are pairwise disjoint, it follows that
$\sum_{j=1}^J\mathbf 1_{\mathcal A_j}(x,y)\lesssim1$.
Using \eqref{eq:energy}, \eqref{eq:volume-two-region-witness},
\eqref{eq:volume-two-region-distances}, and
\eqref{eq:volume-two-region-volumes}, we obtain
\begin{align}\label{eq:volume-two-region-marked-sum}
 \sum_{j=1}^J\mu(P_j)\phi(P_j)^{p\gamma}
 \lesssim\sum_{j=1}^J\iint_{\mathcal A_j}
 \frac{\phi(x,y)^{p\gamma}}{V(x,y)}\,d\mu(x)\,d\mu(y)
 \lesssim\mathcal E^\phi_{1,\gamma,p}(g;U)
\end{align}
for any such $J$, where the implicit positive constants depend only
on $\varepsilon$, $p$, $\gamma$, and the geometric constants of $X$.
For any marked leaf $P_j$, let $I\in\mathcal T_0$ be its initial
ancestor. We have
$\mu(P_j)\phi(P_j)^{p\gamma}\gtrsim\mu(I)\phi(I)^{p\gamma}$
if $\phi=V$, because $\mu(P_j)\leq\mu(I)$ and $1+p\gamma\leq0$.
When $\phi=\rho$, by \eqref{eq:volume-upper-growth}, we obtain
\[
 \frac{\mu(P_j)\phi(P_j)^{p\gamma}}
 {\mu(I)\phi(I)^{p\gamma}}
 \gtrsim\left(\frac{r_{P_j}}{r_I}\right)^{D+p\gamma}\geq1,
\]
where the last inequality follows from $r_{P_j}\leq r_I$ and
$D+p\gamma\leq0$.
From \eqref{eq:volume-fixed-root-scale} and
\eqref{eq:volume-fixed-root-measures}, it follows that $r_I=r_0\sim R$
and $\mu(I)\sim\mu(B)$ for any $I\in\mathcal T_0$, and hence
$\phi(I)\sim\phi(B)$.
Thus, $\mu(P_j)\phi(P_j)^{p\gamma}
\gtrsim\mu(B)\phi(B)^{p\gamma}$ for any $j\in\{1,\ldots,J\}$.
This, together with \eqref{eq:volume-two-region-marked-sum}, further
implies that
\[
 J\mu(B)\phi(B)^{p\gamma}
 \lesssim\sum_{j=1}^J\mu(P_j)\phi(P_j)^{p\gamma}
 \lesssim\mathcal E^\phi_{1,\gamma,p}(g;U)<\infty.
\]
Thus, the construction stops after finitely many steps, at a
balanced partition $\mathcal T$.
By \eqref{eq:volume-fixed-root-scale},
\eqref{eq:volume-fixed-root-measures}, and
$\sharp\mathcal T_0\lesssim1$, we obtain
\[
 \sum_{I\in\mathcal T_0}\mu(I)\phi(I)^{p\gamma}
 \lesssim\mu(B)\phi(B)^{p\gamma}.
\]
This, together with Lemma~\ref{lem:volume-weighted-linear-closure}
and \eqref{eq:volume-two-region-marked-sum}, further implies that
\begin{align}\label{eq:volume-two-region-terminal-weight}
 \sum_{P\in\mathcal T}\mu(P)\phi(P)^{p\gamma}
 &\lesssim\sum_{I\in\mathcal T_0}\mu(I)\phi(I)^{p\gamma}
 +\sum_{j=1}^J\mu(P_j)\phi(P_j)^{p\gamma}\notag\\
 &\lesssim\mu(B)\phi(B)^{p\gamma}
 +\mathcal E^\phi_{1,\gamma,p}(g;U).
\end{align}
For any $P\in\mathcal T$, by the failure of
\eqref{eq:volume-internal-marking} and
$m_P^{1-\varepsilon}(g)\leq a_P(g)\leq m_P^\varepsilon(g)$, we obtain
$|g-a_P(g)|\leq A_0r_P\phi(P)^\gamma$ on
$P\cap\{m_P^{1-\varepsilon}(g)\leq g\leq m_P^\varepsilon(g)\}$.
Since $g,a_P(g)\in[0,t]$, it follows that $|g-a_P(g)|\leq t$
elsewhere in $P$.
From these and \eqref{eq:volume-quantile-concentration}, we infer that
\begin{equation}\label{eq:volume-two-region-residual}
 \int_P|g-a_P(g)|^p\,d\mu
 \leq A_0^pr_P^p\mu(P)\phi(P)^{p\gamma}+2\varepsilon t^p\mu(P).
\end{equation}
Using Lemma~\ref{lem:volume-balanced-cover} with coefficients
$a_P(g)$ and the failure of \eqref{eq:volume-neighbor-marking},
we obtain
\begin{equation}\label{eq:volume-two-region-median-oscillation}
 \inf_{c\in\mathbb R}
 \sum_{\genfrac{}{}{0pt}{}{P\in\mathcal T}{P\cap B\neq\emptyset}}
 \mu(P)|a_P(g)-c|^p
 \lesssim A_0^pR^p\sum_{P\in\mathcal T}\mu(P)\phi(P)^{p\gamma}.
\end{equation}
From \eqref{eq:volume-two-region-residual},
\eqref{eq:volume-two-region-median-oscillation}, and
\eqref{eq:volume-two-region-terminal-weight}, we deduce that
\begin{align}\label{eq:volume-two-region-before-absorption}
 \inf_{c\in\mathbb R}\int_B|g-c|^p\,d\mu
 &\leq2^{p-1}\sum_{P\in\mathcal T}
 \int_P|g-a_P(g)|^p\,d\mu\notag\\
 &\quad+2^{p-1}\inf_{c\in\mathbb R}
 \sum_{\genfrac{}{}{0pt}{}{P\in\mathcal T}{P\cap B\neq\emptyset}}
 \mu(P)|a_P(g)-c|^p\notag\\
 &\leq C_\varepsilon R^p
 \left[\mu(B)\phi(B)^{p\gamma}+\mathcal E^\phi_{1,\gamma,p}(g;U)\right]
 +2^p\varepsilon t^p\mu(U),
\end{align}
where $C_\varepsilon\in(0,\infty)$ depends only on $\varepsilon$,
$p$, $\gamma$, and the geometric constants of $X$.
For any $c\in\mathbb R$, we have $|g-c|\geq\frac{t}{4}$ on
$B\cap\{g=t\}$ when $c\in(-\infty,\frac{3t}{4}]$, and on
$B\cap\{g\leq\frac{t}{2}\}$ when $c\in(\frac{3t}{4},\infty)$.
Using \eqref{eq:volume-two-region-measure-assumption} and taking the
infimum over $c$, we conclude that
\begin{equation}\label{eq:volume-two-region-lower-bound}
 \inf_{c\in\mathbb R}\int_B|g-c|^p\,d\mu
 \geq\frac{\kappa}{4^p}t^p\mu(B).
\end{equation}
We choose $\varepsilon\in(0,\frac14)$, depending only on
$\kappa$, $p$, and the geometric constants of $X$, such that
$2^p\varepsilon\mu(U)\leq\frac{\kappa}{2\cdot4^p}\mu(B)$,
which is possible by \eqref{eq:volume-fixed-root-geometry} and the doubling condition.
From \eqref{eq:volume-two-region-before-absorption} and
\eqref{eq:volume-two-region-lower-bound}, we infer that \eqref{eq:volume-two-region} holds, with
$\Lambda_0:=C+4A_{\rm n}+6$ as in
\eqref{eq:volume-fixed-root-geometry}.
This completes the proof of Lemma~\ref{lem:volume-two-region}.
\end{proof}

The following lemma uses Lemma~\ref{lem:volume-two-region} to
estimate $t^p\mu(U\cap\{g=t\})$ without a fixed positive lower bound
for $\frac{\mu(U\cap\{g=t\})}{\mu(U)}$.

\begin{lemma}\label{lem:volume-tail}
Let $p$, $(X,\rho,\mu)$, $\phi$, and $\gamma$ be as in
Proposition~\ref{prop:volume-negative-poincare}. Let $B$ be a ball
with its radius $r\in(0,\operatorname{diam} X)$, and let $U:=U(B)$ be as in
\eqref{eq:volume-fixed-root-union}.
Then, for any $\vartheta\in(0,\frac{1}{4})$, there exist
$A,C_\vartheta\in(0,\infty)$ and $\Lambda\in[1,\infty)$ such
that, for any $t\in(Ar\phi(B)^\gamma,\infty)$ and any measurable
function $g:X\to[0,t]$ satisfying
\begin{equation}\label{eq:volume-tail-zero}
 \mu\left(U\cap\left\{g=0\right\}\right)\geq\frac{\mu(U)}{2},
\end{equation}
one has
\begin{align}\label{eq:volume-tail-conclusion}
 t^p\mu\left(U\cap\left\{g=t\right\}\right)
 \leq C_\vartheta r^p\mathcal E^\phi_{1,\gamma,p}(g;\Lambda B)+\frac{4\vartheta}{3}t^p
 \mu\left(U\cap\left\{g>\frac{t}{2}\right\}\right),
\end{align}
where $A$, $C_\vartheta$, and $\Lambda$ depend only on
$\vartheta$, $p$, $\gamma$, and the geometric constants of $X$.
\end{lemma}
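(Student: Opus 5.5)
We would prove Lemma~\ref{lem:volume-tail} by a Calder\'on--Zygmund type stopping-time covering of the set $E_t:=U\cap\{g=t\}$, applying Lemma~\ref{lem:volume-two-region} on each selected ball. Fix $\vartheta$, and let $\kappa:=\vartheta/C'$ for a constant $C'$ to be fixed, depending only on the geometric constants. For $\mu$-a.e.\ $x\in E_t$ (a Lebesgue density point of $E_t$), the ratio $\mu(B(x,s)\cap\{g=t\})/\mu(B(x,s))$ tends to $1$ as $s\to0^+$. At the scale $s\sim r$, however, \eqref{eq:volume-tail-zero} and the doubling condition give, for a suitable large multiple $s_0\sim r$ (so that $B(x,s_0)\supset U$ and $\mu(B(x,s_0))\lesssim\mu(U)$), that $\mu(B(x,s_0)\cap\{g\le t/2\})\ge\mu(U)/2\gtrsim\mu(B(x,s_0))$. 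Run through the dyadic scales $s_k:=2^{-k}s_0$ and let $k(x)$ be the first $k$ at which $\mu(B(x,s_k)\cap\{g\le t/2\})<\kappa\mu(B(x,s_k))$. Then $\mu(B(x,s_{k-1})\cap\{g\le t/2\})\ge\kappa\mu(B(x,s_{k-1}))$.

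Two cases arise at the stopping scale. In the first, $\mu(B(x,s_{k-1})\cap\{g=t\})\ge\kappa\mu(B(x,s_{k-1}))$, and we call the ball good. In the second, $\{t/2<g<t\}$ occupies a substantial portion of $B(x,s_k)$, which we count against the tail term. To arrange this dichotomy cleanly, we would instead stop at the first scale $k$ at which $\mu(B(x,s_k)\cap\{g>t/2\})\ge(1-\kappa)\mu(B(x,s_k))$. At the preceding scale, the set $\{g\le t/2\}$ then has proportion at least $\kappa/C_D$. If moreover $\{g=t\}$ has proportion at least $\kappa$ in $B(x,s_k)$, the ball is good; otherwise $\{t/2<g<t\}$ has proportion at least $1-2\kappa$ there, and the ball is bad. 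Apply the $5r$-covering lemma to the stopping balls $B_x:=B(x,s_{k(x)-1})$, obtaining disjoint $B_i$ with $E_t\subset\bigcup 5B_i$ up to a null set.

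For a good ball $B_i$ with radius $R_i\le s_0\lesssim r$, Lemma~\ref{lem:volume-two-region} gives
\[
t^p\mu(B_i)\lesssim R_i^p\mu(B_i)\phi(B_i)^{p\gamma}+R_i^p\mathcal E^\phi_{1,\gamma,p}(g;\Lambda_0B_i).
\]
We now use $\gamma<0$ in the admissible ranges. We have $R_i^p\phi(B_i)^{p\gamma}\gtrsim$ its value at scale $r$ only in the wrong direction, so this step needs care. Instead, we absorb the first term with a trick: since $R_i\lesssim r$, the sign conditions $D+p\gamma\le0$ and $1+p\gamma\le0$ give $R_i^p\phi(B_i)^{p\gamma}\le R_i^p\mu(B_i)^{-1}\cdot\mu(B_i)\phi(B_i)^{p\gamma}$, but we actually need $t^p\gg R_i^p\phi(B_i)^{p\gamma}$. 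This is the main obstacle: small balls have large $\phi(B_i)^{p\gamma}$, so the hypothesis $t>Ar\phi(B)^\gamma$ does not directly beat it. To resolve this, we would first try restricting the stopping to scales $s_k$ for which $s_k\phi(B(x,s_k))^\gamma\le t/(2C)$. Because $\gamma<0$ and we are in the stated ranges, the function $s\mapsto s\phi(B(x,s))^\gamma$ is essentially nonincreasing up to constants (as in the monotonicity computations in Lemma~\ref{lem:volume-weighted-linear-closure}; actually it is increasing in $s$ when $1+p\gamma\le 0$ fails, so we check that $s^{p}\mu\phi^{p\gamma}/\mu$ behaves correctly). If instead it is increasing, small scales are harmless and the choice of $A$ controls the top scale. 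In that case the first term is absorbed, giving $t^p\mu(B_i)\lesssim r^p\mathcal E^\phi_{1,\gamma,p}(g;\Lambda_0B_i)$.

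For a bad ball, $\mu(B_i)\le(1-2\kappa)^{-1}C_D\,\mu(B_i\cap\{t/2<g<t\})$; this is the piece that must be bounded by $\frac{4\vartheta}{3}\mu(U\cap\{g>t/2\})$. Here we would refine the stopping rule so that bad balls are ones where $\{g=t\}$ has proportion below $\kappa$, giving $\mu(5B_i\cap E_t)\lesssim\kappa\mu(B_i)$ after a further density argument; this costs only $\vartheta\mu(B_i\cap\{g>t/2\})$ with $\kappa$ chosen small. Summing over the disjoint $B_i$, which all lie in a fixed dilate $\Lambda B$ of $B$ with bounded overlap of the sets $\Lambda_0B_i$, yields \eqref{eq:volume-tail-conclusion}. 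The step we expect to be hardest is the absorption of the additive term $R_i^p\phi(B_i)^{p\gamma}$ at small scales, which relies on the sign conditions on $\gamma$ and on the threshold $A$.
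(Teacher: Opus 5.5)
Your proposal has a genuine gap at the step you yourself flag as the main obstacle, and the suggested fix does not close it. The map $s\mapsto s\,\phi(B(x,s))^\gamma$ has no fixed monotonicity in the admissible ranges, and in the central cases it is nonincreasing. For $\phi=\rho$ with $D\ge p$, the condition $\gamma\le -D/p$ forces $1+\gamma\le 0$. For $\phi=V$ on an Ahlfors $N$-regular space with $N\ge p$, one gets $s^{1+N\gamma}$ with $1+N\gamma\le 0$. In these cases the term $R_i^p\mu(B_i)\phi(B_i)^{p\gamma}$ from Lemma~\ref{lem:volume-two-region} dominates $t^p\mu(B_i)$ at small stopping scales, while $t>Ar\phi(B)^\gamma$ only controls the top scale. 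Restricting the stopping to scales with $s\,\phi(B(x,s))^\gamma\le t/(2C)$ just leaves uncovered the density points of $\{g=t\}$ whose stopping scale lies below that threshold, and you give no estimate for them.

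The missing idea is to bypass Lemma~\ref{lem:volume-two-region} for these points and bound the energy density pointwise. The value $s\,\phi(B(x,s))^\gamma$ is $\le ct$ at the top scale, $>ct$ at the stopping scale, and comparable between consecutive scales. Hence there is an intermediate scale $s_*$, strictly above the stopping scale, with $s_*\phi(B(x,s_*))^\gamma\sim ct$. Because stopping has not yet occurred there, $\{g\le t/2\}$ still fills a fixed fraction of $B(x,s_*)$, and by reverse doubling a fixed fraction of that set lies outside $B(x,\eta s_*)$. For such $y$ one has $|g(x)-g(y)|\ge t/2>\rho(x,y)\phi(x,y)^\gamma$ once $c$ is small. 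Hence $[G^\phi_g(x)]^p\gtrsim \phi(B(x,s_*))^{p\gamma}\sim t^p/s_*^p\gtrsim t^p/r^p$. Integrating over these $x$ via \eqref{eq:G-mass} gives $t^p\mu(\cdot)\lesssim r^p\mathcal E^\phi_{1,\gamma,p}(g;\Lambda B)$. This is exactly how the paper treats parents $Q$ with $r_Q\phi(Q)^\gamma>ct$, via the minimal ancestor $R$ with $r_R\phi(R)^\gamma\le ct$.

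Two further steps also need repair. First, disjoint balls of different radii need not have boundedly overlapping dilates $\Lambda_0B_i$; balls accumulating at a point are a counterexample. So you cannot replace $R_i^p$ by $r^p$ and then sum. Keep $R_i^p$, use bounded overlap only among selected balls of a fixed scale, and sum the geometric series in the scales, as the paper does with the factor $\delta^{jp}$.

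Second, and more seriously, the coefficient of $t^p\mu(U\cap\{g>t/2\})$ must be small, since it is absorbed in Proposition~\ref{prop:volume-negative-poincare}, and the $5r$-covering loses this. A small proportion of $\{g=t\}$ in $B(x_i,s_k)$ gives no control of $\mu(5B_i\cap\{g=t\})$ beyond $\mu(5B_i)\lesssim\mu(B_i\cap\{g>t/2\})$, with a constant that is not small. You would have to test the proportion of $\{g=t\}$ on the enlarged ball itself. The paper avoids enlargement altogether by using maximal dyadic cubes. Let $\mathcal P$ be the maximal cubes on which $U\cap\{g=t\}$ has proportion $>\vartheta$, and $\mathcal R$ the maximal proper ancestors on which $U\cap\{g>t/2\}$ has proportion $>\frac34$. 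Maximality of $\mathcal P$ then gives $\mu(R\cap\{g=t\})\le\vartheta\mu(R)\le\frac{4\vartheta}{3}\mu(R\cap\{g>t/2\})$ on the pairwise disjoint cubes $R\in\mathcal R$.
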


\begin{proof}
Let $\mathcal T_0:=\mathcal T_0(B)$ be as in
\eqref{eq:volume-fixed-roots}, and let
\begin{equation*}
 T:=U\cap\left\{g=t\right\}\ \text{and}\
 S:=U\cap\left\{g>\frac{t}{2}\right\}.
\end{equation*}
Let $\Lambda_0$ be the dilation chosen in the proof of
Lemma~\ref{lem:volume-two-region}, and let
$\Lambda:=(C+4A_{\rm n}+7)\Lambda_0$.
By \eqref{eq:volume-fixed-root-geometry} and
\eqref{eq:volume-fixed-root-scale}, we obtain
$\Lambda_0(C+4A_{\rm n}+6)B\subset\Lambda B$ and
$\Lambda_0B(Q)\subset\Lambda B$ for any $Q\in\mathcal D$
with $Q\subset I\in\mathcal T_0$.
We may assume $\mathcal E^\phi_{1,\gamma,p}(g;\Lambda B)<\infty$.
Let $c\in(0,1)$ be sufficiently small, depending only on
$\vartheta$, $p$, $\gamma$, and the geometric constants of $X$.
From \eqref{eq:volume-fixed-root-scale} and
\eqref{eq:volume-fixed-root-measures}, it follows that
$r_I\phi(I)^\gamma\lesssim r\phi(B)^\gamma$ for any $I\in\mathcal T_0$.
We choose $A$ sufficiently large so that $r_I\phi(I)^\gamma\leq ct$ for any
$I\in\mathcal T_0$.

Let $B_*$ have the same center as $B$ and radius
$\min\{(C+4A_{\rm n}+6)r,2\operatorname{diam} X\}$.
Thus, $U\subset B_*$, $r_{B_*}\sim r$, and
$\mu(B_*)\sim\mu(B)$, and Lemma~\ref{lem:volume-two-region}
can be used for $B_*$.
If some $I\in\mathcal T_0$ satisfies
$\mu(T\cap I)\in(\vartheta\mu(I),\mu(I)]$, then, by
\eqref{eq:volume-tail-zero}, \eqref{eq:volume-fixed-root-geometry},
\eqref{eq:volume-fixed-root-measures}, and the doubling condition,
we obtain
\[
 \mu\left(B_*\cap\left\{g\leq\frac{t}{2}\right\}\right)
 \geq\mu\left(U\cap\left\{g=0\right\}\right)
 \gtrsim\mu\left(B_*\right)
\]
and
\[
 \mu\left(B_*\cap\left\{g=t\right\}\right)
 \geq\mu(T\cap I)
 \gtrsim\vartheta\mu\left(B_*\right).
\]
Applying Lemma~\ref{lem:volume-two-region} with the ball
$B_*$, we find that
\[
 t^p\mu(B)\lesssim r^p\mu(B)\phi(B)^{p\gamma}
 +r^p\mathcal E^\phi_{1,\gamma,p}(g;\Lambda B).
\]
Since $t\in(Ar\phi(B)^\gamma,\infty)$, it follows that
$
 r^p\mu(B)\phi(B)^{p\gamma}
 \leq\frac{1}{A^p}t^p\mu(B).
$
By choosing $A$ sufficiently large, we absorb
$r^p\mu(B)\phi(B)^{p\gamma}$ into $t^p\mu(B)$ and obtain
\[
 t^p\mu(B)\lesssim r^p\mathcal E^\phi_{1,\gamma,p}(g;\Lambda B).
\]
From \eqref{eq:volume-fixed-root-geometry} and the doubling condition,
we infer that $\mu(T)\lesssim\mu(B)$.
Thus, $t^p\mu(T)\lesssim r^p\mathcal E^\phi_{1,\gamma,p}(g;\Lambda B)$,
which proves \eqref{eq:volume-tail-conclusion} in this case.
Thus, we only need to consider $\mu(T\cap I)\leq\vartheta\mu(I)$ for any
$I\in\mathcal T_0$.

Let $\mathcal P$ be the family of maximal cubes
$P\in\bigcup_{I\in\mathcal T_0}\mathcal D(I)$
satisfying $\mu(T\cap P)\in(\vartheta\mu(P),\mu(P)]$.
Here, $\mathcal D(I)$ is as in \eqref{eq:dyadic-descendants},
and maximality is with respect to inclusion.
These cubes are pairwise disjoint, and, by the Lebesgue
differentiation theorem and \eqref{eq:dyadic-ball}, we obtain
\begin{equation}\label{eq:volume-tail-cover}
 \mu\left(T\setminus\bigcup_{P\in\mathcal P}P\right)=0.
\end{equation}
Any proper ancestor $R$ of $P\in\mathcal P$ within its initial
cube satisfies $\mu(T\cap R)\leq\vartheta\mu(R)$ by maximality.
Let $\mathcal R$ consist of the cubes $R\in\mathcal D$
that are maximal with respect to inclusion among those satisfying
$\mu(S\cap R)\in(\frac34\mu(R),\mu(R)]$ and
$P\subsetneqq R\subset I$ for some $P\in\mathcal P$ and
$I\in\mathcal T_0$.
Since these cubes are pairwise disjoint, it follows that
\begin{align}\label{eq:volume-tail-dense-ancestors}
 t^p\mu\left(T\cap\bigcup_{R\in\mathcal R}R\right)
 &=t^p\sum_{R\in\mathcal R}\mu(T\cap R)\notag\\
 &\leq\frac{4\vartheta}{3}t^p
 \sum_{R\in\mathcal R}\mu(S\cap R)
 \leq\frac{4\vartheta}{3}t^p\mu(S).
\end{align}

Fix $P\in\mathcal P$ not contained in any $R\in\mathcal R$,
and let $Q$ be its parent. Since no initial cube belongs to
$\mathcal P$ in this case, $P$ is a proper descendant of its initial
cube $I$, and $Q\subset I$. By the definition of $\mathcal R$,
we obtain $\mu(Q\setminus S)\geq\frac{\mu(Q)}{4}$.
Using \eqref{eq:dyadic-ball} and the doubling condition, we find that
$\mu(P)\sim\mu(Q)\sim\mu(B(Q))$, and hence
\[
 \mu\left(B(Q)\cap\left\{g\leq\frac{t}{2}\right\}\right)
 \geq\mu(Q\setminus S)\gtrsim\mu(B(Q))
\]
and
\[
 \mu\left(B(Q)\cap\left\{g=t\right\}\right)
 \geq\mu(T\cap P)\gtrsim\vartheta\mu(B(Q)).
\]
Using Lemma~\ref{lem:volume-two-region} with $B(Q)$, we obtain
\[
 t^p\mu(Q)\lesssim r_Q^p\mu(Q)\phi(Q)^{p\gamma}
 +r_Q^p\mathcal E^\phi_{1,\gamma,p}(g;\Lambda_0B(Q)).
\]

If $r_Q\phi(Q)^\gamma\leq ct$, then
\[
 r_Q^p\mu(Q)\phi(Q)^{p\gamma}
 =\left[r_Q\phi(Q)^\gamma\right]^p\mu(Q)
 \leq c^pt^p\mu(Q).
\]
By choosing $c$ sufficiently small, we conclude that
$t^p\mu(Q)\lesssim r_Q^p\mathcal E^\phi_{1,\gamma,p}(g;\Lambda_0B(Q))$.
Let $\mathcal Q$ be the family of cubes $Q\in\mathcal D$ such
that $r_Q\phi(Q)^\gamma\leq ct$ and there exists a child
$P\in\mathcal P$ satisfying $P\not\subset R$ for any
$R\in\mathcal R$.
Using the bounded overlap of the balls $\Lambda_0B(Q)$
at a fixed generation and Tonelli's theorem, we obtain, for any
$j\in\mathbb Z_+$,
\[
 \sum_{I\in\mathcal T_0}
 \sum_{Q\in\mathcal Q\cap\mathcal D_j(I)}
 \mathcal E^\phi_{1,\gamma,p}(g;\Lambda_0B(Q))
 \lesssim\mathcal E^\phi_{1,\gamma,p}(g;\Lambda B).
\]
This, together with $r_Q\lesssim\delta^jr$ for
$Q\in\mathcal D_j(I)$ and $I\in\mathcal T_0$, further implies that
\begin{align}\label{eq:volume-tail-small-threshold}
 t^p\mu\left(T\cap\bigcup_{Q\in\mathcal Q}Q\right)
 &\leq t^p\sum_{Q\in\mathcal Q}\mu(Q)
 \lesssim\sum_{Q\in\mathcal Q}r_Q^p
 \mathcal E^\phi_{1,\gamma,p}(g;\Lambda_0B(Q))\notag\\
 &\lesssim\sum_{j=0}^{\infty}\delta^{jp}r^p
 \mathcal E^\phi_{1,\gamma,p}(g;\Lambda B)
 \lesssim r^p\mathcal E^\phi_{1,\gamma,p}(g;\Lambda B).
\end{align}

It remains to consider $P\in\mathcal P$ not contained in any
$R\in\mathcal R$, with its parent $Q$ satisfying
$r_Q\phi(Q)^\gamma\in(ct,\infty)$.
Let $I\in\mathcal T_0$ satisfy $Q\subset I$.
Using $r_I\phi(I)^\gamma\leq ct$, \eqref{eq:dyadic-radii}, and
the doubling condition, we obtain a cube $R\in\mathcal D$,
minimal with respect to inclusion among those satisfying
$Q\subsetneqq R\subset I$ and $r_R\phi(R)^\gamma\leq ct$,
such that $r_R\phi(R)^\gamma\sim ct\sim t$.
Indeed, let $R'$ be the child of $R$ containing $Q$.
From the minimality of $R$, it follows that
$r_{R'}\phi(R')^\gamma\in(ct,\infty)$ when $R'\neq Q$;
when $R'=Q$, this follows from the assumption
$r_Q\phi(Q)^\gamma\in(ct,\infty)$.
By the doubling condition and $r_{R'}=\delta r_R$, we find that
$r_R\phi(R)^\gamma\sim r_{R'}\phi(R')^\gamma$.
Since $P$ is not contained in any cube of $\mathcal R$, it follows
that $\mu(R\setminus S)\geq\frac{\mu(R)}{4}$.
Fix $x\in T\cap P$. By \eqref{eq:volume-reverse-growth},
\eqref{eq:dyadic-ball}, and the doubling condition, we obtain
\[
 \mu\left((R\setminus S)\setminus B(x,\eta r_R)\right)
 \gtrsim\mu(R),
\]
where $\eta\in(0,1)$ depends only on the geometric constants of $X$.
For any $y\in(R\setminus S)\setminus B(x,\eta r_R)$, we have
$\eta r_R\leq\rho(x,y)\lesssim r_R$,
$V(x,y)\sim\mu(R)$, $\phi(x,y)\sim\phi(R)$, and
$|g(x)-g(y)|\geq\frac{t}{2}$.
By choosing $c$ sufficiently small and using
$r_R\phi(R)^\gamma\leq ct$, we obtain
$|g(x)-g(y)|>\rho(x,y)\phi(x,y)^\gamma$.
We use $G_g^\phi$ from \eqref{eq:G} with $f=g$, $\lambda=1$, and
$E=\Lambda B$. From $r_R\phi(R)^\gamma\sim t$ and
\eqref{eq:volume-fixed-root-scale}, we deduce that
\begin{align}\label{eq:volume-tail-density}
 [G_g^\phi(x)]^p
 \geq\int_{(R\setminus S)\setminus B(x,\eta r_R)}
 \frac{\phi(x,y)^{p\gamma}}{V(x,y)}\,d\mu(y)
 \gtrsim\phi(R)^{p\gamma}
 \sim\frac{t^p}{r_R^p}\gtrsim\frac{t^p}{r^p}.
\end{align}
By \eqref{eq:volume-tail-cover}, we find that $\mu$-almost every
$x\in T\setminus(\bigcup_{R\in\mathcal R}R\cup\bigcup_{Q\in\mathcal Q}Q)$
belongs to some $P\in\mathcal P$.
Since $x\notin\bigcup_{R\in\mathcal R}R$, it follows that
$P\not\subset R$ for any $R\in\mathcal R$.
Let $Q$ be the parent of $P$. If $r_Q\phi(Q)^\gamma\leq ct$, then
$Q\in\mathcal Q$ and $x\in P\subset Q$, contradicting
$x\notin\bigcup_{Q\in\mathcal Q}Q$.
Thus, $r_Q\phi(Q)^\gamma\in(ct,\infty)$ and
\eqref{eq:volume-tail-density} holds at $x$.
From \eqref{eq:volume-tail-density} and \eqref{eq:G-mass}, we infer that
\begin{align*}
 t^p\mu\left(T\setminus
 \left[\bigcup_{R\in\mathcal R}R\cup\bigcup_{Q\in\mathcal Q}Q\right]\right)
 \lesssim r^p\int_{\Lambda B}[G_g^\phi(x)]^p\,d\mu(x)=r^p\mathcal E^\phi_{1,\gamma,p}(g;\Lambda B).
\end{align*}
This, together with \eqref{eq:volume-tail-dense-ancestors} and
\eqref{eq:volume-tail-small-threshold}, further implies that
\eqref{eq:volume-tail-conclusion} holds.
This completes the proof of Lemma~\ref{lem:volume-tail}.
\end{proof}

Now, we are in a position to prove Proposition \ref{prop:volume-negative-poincare}.

\begin{proof}[Proof of Proposition~\ref{prop:volume-negative-poincare}]
Fix a ball $B$ with radius $r\in(0,\operatorname{diam} X)$, and let
$U:=U(B)$ be as in \eqref{eq:volume-fixed-root-union}.
First, let $h:X\to[0,\infty)$ be a bounded measurable function
satisfying
\[
 \mu(U\cap\{h=0\})\geq\frac{\mu(U)}{2}.
\]
For any $k\in\mathbb Z$, define
\[
 h_k:=\min\left\{(h-2^k)_+,2^k,(5\cdot2^k-h)_+\right\}.
\]
Since $\{h_k=2^k\}=\{h\in[2^{k+1},2^{k+2}]\}$ and
$\{h_k>2^{k-1}\}=\{h\in(\frac{3}{2}\,2^k,\frac{9}{2}\,2^k)\}$,
it follows that
\begin{equation}\label{eq:volume-tail-band-comparison}
 \sum_{k\in\mathbb Z}2^{kp}\mu(U\cap\{h_k=2^k\})
 \geq4^{-p}\int_Uh^p\,d\mu,
\end{equation}
and
\begin{equation}\label{eq:volume-tail-band-S-sum}
 \sum_{k\in\mathbb Z}2^{kp}\mu(U\cap\{h_k>2^{k-1}\})
 \leq2\int_Uh^p\,d\mu.
\end{equation}
Let $\vartheta\in(0,\frac{1}{4})$ satisfy
$\frac{8\vartheta}{3}\leq\frac{1}{2\cdot4^p}$, and let $A$, $C_\vartheta$,
and $\Lambda$ be as in Lemma~\ref{lem:volume-tail}.
Note that, for any $k\in\mathbb Z$ and $x,y\in X$,
\[
 |h_k(x)-h_k(y)|\leq|h(x)-h(y)|.
\]
Since $h_k(x)\ne0$ precisely when
$2^k\in(\frac{h(x)}{5},h(x))$, it follows that
$\sharp\{k\in\mathbb Z:h_k(x)\ne0\}\leq3$ for any $x\in X$.
Thus, for any $x,y\in X$,
 \begin{align*}
 \sharp\left\{k\in\mathbb Z:h_k(x)\ne h_k(y)\right\}
 \leq\sharp\left\{k\in\mathbb Z:h_k(x)\ne0\right\}
 +\sharp\left\{k\in\mathbb Z:h_k(y)\ne0\right\}\leq6.
 \end{align*}
By this and $|h_k(x)-h_k(y)|\leq|h(x)-h(y)|$, we obtain
\begin{equation}\label{eq:volume-tail-band-energy}
 \sum_{k\in\mathbb Z}\mathcal E^\phi_{1,\gamma,p}(h_k;\Lambda B)
 \leq6\mathcal E^\phi_{1,\gamma,p}(h;\Lambda B).
\end{equation}
Since $\{h=0\}\subset\{h_k=0\}$, it follows that
$\mu(U\cap\{h_k=0\})\geq\frac{\mu(U)}{2}$.
Using Lemma~\ref{lem:volume-tail} for $h_k$ with
$2^k\in(Ar\phi(B)^\gamma,\infty)$, together with
\eqref{eq:volume-tail-band-comparison},
\eqref{eq:volume-tail-band-S-sum}, and
\eqref{eq:volume-tail-band-energy}, we obtain
 \begin{align*}
 4^{-p}\int_Uh^p\,d\mu
 \leq\mu(U)
 \sum_{\genfrac{}{}{0pt}{}{k\in\mathbb Z}{2^k\leq Ar\phi(B)^\gamma}}2^{kp}
 +6C_\vartheta r^p\mathcal E^\phi_{1,\gamma,p}(h;\Lambda B)
 +\frac{8\vartheta}{3}\int_Uh^p\,d\mu.
 \end{align*}
From \eqref{eq:volume-fixed-root-geometry} and the doubling condition, we infer that
\[
 \mu(U)
 \sum_{\genfrac{}{}{0pt}{}{k\in\mathbb Z}{2^k\leq Ar\phi(B)^\gamma}}2^{kp}
 \lesssim r^p\mu(B)\phi(B)^{p\gamma}.
\]
Since $\int_Uh^p\,d\mu<\infty$ and
$\frac{8\vartheta}{3}\leq\frac{1}{2\cdot4^p}$, it follows from
these estimates that
\begin{equation}\label{eq:volume-tail-bounded-Lp}
 \int_Uh^p\,d\mu
 \lesssim r^p\mu(B)\phi(B)^{p\gamma}
 +r^p\mathcal E^\phi_{1,\gamma,p}(h;\Lambda B),
\end{equation}
where the implicit positive constant depends only on $p$, $\gamma$,
and the geometric constants of $X$.

By \eqref{eq:volume-median-convention}, we find that both
$\min\{(f-a_U(f))_+,M\}$ and $\min\{(a_U(f)-f)_+,M\}$ satisfy
the hypotheses imposed on $h$ for any $M\in(0,\infty)$.
From this,
\eqref{eq:volume-tail-bounded-Lp}, and Fatou's lemma as $M\to\infty$,
we deduce that
\[
 \int_U|f-a_U(f)|^p\,d\mu
 \lesssim r^p\mu(B)\phi(B)^{p\gamma}
 +r^p\mathcal E^\phi_{1,\gamma,p}(f;\Lambda B).
\]
This, together with $B\subset U$, further implies that
\[
 \fint_B\fint_B|f(x)-f(y)|^p\,d\mu(x)\,d\mu(y)
 \lesssim r^p\phi(B)^{p\gamma}
 +\frac{r^p}{\mu(B)}\mathcal E^\phi_{1,\gamma,p}(f;\Lambda B).
\]
This completes the proof of Proposition~\ref{prop:volume-negative-poincare}.
\end{proof}

\begin{proof}[Proof of Theorem~\ref{thm:NPI}]
Using Lemma~\ref{lem:curve-local-estimates}(i) with $f$ replaced by
$\frac f\lambda$ and using $s\leq a+(s-a)_+$ for $s,a\in[0,\infty)$,
we obtain \eqref{eq:NPI} for $\gamma\in(-\infty,-\frac qp]$
if $\phi=\rho$, and for $\gamma\in(-\infty,-\frac{q}{dp}]$
if $\phi=V$.
Applying Proposition~\ref{prop:volume-negative-poincare} with $f$
replaced by $\frac f\lambda$ and multiplying its estimate by
$\lambda^p$, we conclude that \eqref{eq:NPI} holds for
$\gamma\in(-\infty,-\frac{D}{p}]$ if $\phi=\rho$, and for
$\gamma\in(-\infty,-\frac1p]$ if $\phi=V$.
The unions of these ranges are precisely the ranges stated in
Theorem~\ref{thm:NPI}.
This completes the proof of Theorem~\ref{thm:NPI}.
\end{proof}

\begin{proof}[Proof of Theorem~\ref{thm:asymptotic}]
When $\gamma\in(-\infty,0)$, Theorem~\ref{thm:NPI} gives
\eqref{eq:NPI} throughout the stated parameter ranges.
Applying Proposition~\ref{prop:curve-asymptotic}(i), we obtain
\eqref{eq:asymptotic-lower}.
When $\gamma\in(0,\infty)$, Proposition~\ref{prop:curve-asymptotic}(ii)
gives
\eqref{eq:asymptotic-lower}.
This completes the proof of Theorem~\ref{thm:asymptotic}.
\end{proof}

\section{Sharpness of the Parameter Ranges}\label{sec:optimality}

In this section, we construct counterexamples showing that the
negative parameter ranges in Theorem~\ref{thm:asymptotic} are sharp
for given lower and upper dimensions.
The sharpness of the ranges in Theorem~\ref{thm:NPI} then follows
from Proposition~\ref{prop:curve-asymptotic}(i).

\begin{proposition}\label{prop:counterexample}

Let $p\in[1,\infty)$, $q\in[1,p]$, $D\in[1,\infty)$, and
$d\in(0,D]$. Then the following assertions hold.

\begin{enumerate}[{\rm (i)}]
\item
For any $\gamma\in(-\frac{\min\{q,D\}}{p},0)$, there exist
a compact $q$-Poincar\'e space $(X,\rho,\mu)$, with lower dimension
$d$ and upper dimension $D$, and $f\in L^1(\mu)$ such that
$\mathcal E_p(f;X)\in(0,\infty]$ and, with $\phi=\rho$,
\begin{equation}\label{eq:energy-zero}
 \lim_{\lambda\to0^+}\mathcal E^\phi_{\lambda,\gamma,p}(f;X)=0.
\end{equation}

\item
For any $\gamma\in(-\frac{q}{p\max\{q,d\}},0)$, there exist
a compact $q$-Poincar\'e space $(X,\rho,\mu)$, with lower dimension
$d$ and upper dimension $D$, and $f\in L^1(\mu)$ such that
$\mathcal E_p(f;X)\in(0,\infty]$ and \eqref{eq:energy-zero} holds
with $\phi=V$.
\end{enumerate}
\end{proposition}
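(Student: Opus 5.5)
The plan is to use, in every case, a function with a single jump. Let $f:=\mathbf 1_E$, where $E$ is one of two pieces of a glued space $X$ and the interface between the pieces is thin. For $\gamma<0$, the range of $\lambda$, and $\rho(x,y)\le\operatorname{diam}X$, the threshold $\lambda\rho(x,y)\phi(x,y)^\gamma$ is below $1$ for all small $\lambda$, once we check that $\phi(x,y)^\gamma$ does not blow up in the region that matters. In that case the condition defining \eqref{eq:energy} holds exactly on pairs with $f(x)\ne f(y)$, so
\[
 \mathcal E^\phi_{\lambda,\gamma,p}(f;X)\le\lambda^p\int_E\int_{X\setminus E}\frac{\phi(x,y)^{\gamma p}}{V(x,y)}\,d\mu(y)\,d\mu(x)
\]
for small $\lambda$. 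Hence \eqref{eq:energy-zero} follows as soon as this cross integral is finite. Since a Poincar\'e space is connected, a nonconstant indicator $f$ has a genuine jump. Then $\mathcal E_p(f;X)=\infty$ for $p>1$, because a Haj\l asz function in a connected Poincar\'e space has no jump, and $|Df|(X)\in(0,\infty]$ for $p=1$. So $\mathcal E_p(f;X)\in(0,\infty]$, and the whole task is geometric.

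For (i) we split according to $\min\{q,D\}$. If $D<q$, take a bow-tie: two compact $D$-regular cones glued at a vertex $o$. Such a space supports a $q$-Poincar\'e inequality for $q>D$, by the gluing theorems of Heinonen--Koskela type; for $q\ge D$ in the borderline case we would need the corresponding refinement. For $x,y$ in different wings we have $\rho(x,y)\sim\max\{\rho(x,o),\rho(y,o)\}$. Integrating over dyadic annuli around $o$ therefore gives a cross integral of size
\[
 \sum_k 2^{-kD}\,2^{-k\gamma p}\,2^{kD}\,2^{-kD}=\sum_k2^{-k(D+\gamma p)}<\infty\iff\gamma p>-D .
\]
If $q\le D$, glue two $D$-dimensional pieces along an interface $\Sigma$ of dimension $D-q$, meaning $\mu(\{\operatorname{dist}(\cdot,\Sigma)<r\})\sim r^q$. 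An example is two half-spaces of $\mathbb R^D$, possibly weighted, identified along a $(D-q)$-dimensional set, chosen so that the glued space is still $q$-Poincar\'e. The same annular count around $\Sigma$ gives convergence exactly when $\gamma p>-q$.

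The lower dimension $d\le D$ must also be arranged. We would do this by a product or weight modification away from the interface, for instance by attaching a region where $V(x,r)\sim r^d$. This keeps doubling, the $q$-Poincar\'e inequality, and the upper bound \eqref{eq:volume-upper-growth}, and it does not affect the cross integral.

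For (ii), $\phi=V$, the cross integrand becomes $V(x,y)^{\gamma p-1}$. Now place the interface in a region of local dimension $s$. The annular count gives $\sum_k2^{-ks}\,2^{-ks\gamma p}\,2^{ks}\,2^{-kq}\cdots$, and after simplification it is finite iff $s\gamma p>-q$. To make the admissible range as large as possible we want $s$ as small as allowed, namely $s=\max\{q,d\}$: it must be at least $d$, and at least $q$ so that a codimension-$q$ interface exists. This yields exactly $\gamma>-q/(p\max\{q,d\})$.

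The main obstacle is the construction itself. We need compact spaces that simultaneously satisfy the $q$-Poincar\'e inequality with precisely the prescribed exponent $q$, have the prescribed $d$ and $D$, and carry an interface of codimension exactly $\min\{q,D\}$ (respectively $q$ inside local dimension $\max\{q,d\}$). The delicate point is the borderline gluing exponent. I would first try weighted Euclidean models, such as $\mathbb R^n$ with a power weight $|x_1|^{\alpha}$, which are $A_q$-type and hence $q$-Poincar\'e, and tune $\alpha$ so that the hyperplane $\{x_1=0\}$ has the required codimension.
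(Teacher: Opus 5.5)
Your analytic skeleton matches the paper's. You take $f$ to be the indicator of one of two pieces glued along a thin set. You bound $\mathcal E^\phi_{\lambda,\gamma,p}(f;X)$ by $\lambda^p$ times the cross integral; this holds for every $\lambda$, so no smallness of the threshold is needed. A dyadic shell count around the gluing set bounds that integral, and $\mathcal E_p(f;X)>0$ comes from the Poincar\'e inequality. The gap is the geometry, which you leave open and aim at the wrong target.

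You ask for an interface of codimension \emph{exactly} $\min\{q,D\}$ (resp.\ exactly $q$). For $q>1$ this is not a delicate borderline but a failure. A compact $A$ with $\mathcal H^{N-q}(A)<\infty$ in an Ahlfors $N$-regular space $Y$ has zero $q$-capacity. So there are Lipschitz cutoffs $\varphi_j$ equal to $1$ near $A$ with $\int_Y(\operatorname{Lip}\varphi_j)^q\,d\nu\to0$. The continuous functions $u_j:=1-\varphi_j$ on $Y_+$ and $u_j:=0$ on $Y_-$ then have vanishing $q$-energy and converge to $\mathbf 1_{Y_+}$, contradicting \eqref{PS} on balls centered on $A$. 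This is why the Heinonen--Koskela gluing theorem requires $\mathcal H^s_\infty(A\cap B(a,r))\gtrsim r^s$ with $s>N-q$ strictly.

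The borderline is also unnecessary, because the $\gamma$-ranges are open. Fix $\gamma$ first and take a subcritical codimension: $\kappa\in(-p\gamma,\min\{q,N\})$ in (i), resp.\ $\kappa\in(-Np\gamma,\min\{q,N\})$ in (ii). Then glue along a set of dimension $s=N-\kappa$. Your shell count yields $\sum_j2^{-j(p\gamma+\kappa)}$, resp.\ $\sum_j2^{-j(Np\gamma+\kappa)}$, and both are finite.

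In (ii), your choice of local dimension $\max\{q,d\}$ comes only from that borderline requirement. It breaks the hypotheses when $q>D$, which is allowed since only $d\le D$ is assumed: local growth $V(x,r)\sim r^q$ near the interface is incompatible with \eqref{eq:volume-upper-growth}. The range $\gamma>-\min\{q,N\}/(Np)$ is largest for the smallest admissible $N$, so the right choice is $N=d$, and indeed $\min\{q,d\}/d=q/\max\{q,d\}$. In (i) the right choice is $N=D$.

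Your step of attaching a region with $V(x,r)\sim r^d$ is unnecessary. An Ahlfors $N$-regular space with $d\le N\le D$ automatically has lower dimension $d$ and upper dimension $D$, since both conditions are one-sided.

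The models you suggest cannot produce the required data in general. These are half-spaces of $\mathbb R^D$ and power weights $|x_1|^\alpha$ across a hyperplane. The dimensions $d$ and $D$ are arbitrary reals. A power weight makes the local dimension vary between $n$ and $n+\alpha$, so for instance $d=D$ with $q>1$ cannot be handled. Near the endpoint of the $\gamma$-range you also need interfaces of arbitrarily small fractional dimension.

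The paper supplies exactly the missing pieces:
\begin{enumerate}[(a)]
\item Laakso's compact Ahlfors $N$-regular $1$-Poincar\'e spaces for every real $N>1$.
\item Closed Ahlfors $s$-regular subsets $A$ with $\operatorname{diam}A\sim\operatorname{diam}Y$ from Mattila--Saaranen, which give both the content bound needed for gluing and $\nu(A_r)\lesssim r^{N-s}$.
\item The Heinonen--Koskela gluing theorem with exponent $\min\{q,N\}$, upgraded to \eqref{PS} by Keith's theorem.
\item The interval $[-1,1]$ with $f=\mathbf 1_{(0,1]}$ for $D=1$ in (i) and $d\le1$ in (ii), where the target range is $(-\frac1p,0)$ and no Laakso space is needed.
\end{enumerate}
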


\begin{corollary}\label{cor:NPI-sharpness}
Under the parameter conditions of either assertion in
Proposition~\ref{prop:counterexample}, there exists a compact
$q$-Poincar\'e space $(X,\rho,\mu)$ with lower dimension $d$ and
upper dimension $D$ on which \eqref{eq:NPI} fails.
\end{corollary}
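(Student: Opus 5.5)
We argue by contradiction, taking the space $(X,\rho,\mu)$ and the function $f\in L^1(\mu)$ furnished by Proposition~\ref{prop:counterexample} in the relevant case, with the prescribed $p$, $q$, $d$, $D$, and $\gamma$. Suppose that \eqref{eq:NPI} held on this $X$ for some constants $C\in(0,\infty)$ and $\Lambda\in[1,\infty)$, for all balls $B$ with $r_B\in(0,\operatorname{diam}X)$, all $\lambda>0$, and all measurable $f$.

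Since $\gamma\in(-\infty,0)$, we are then in case~(i) of Proposition~\ref{prop:curve-asymptotic}. That proposition only uses the validity of \eqref{eq:NPI} with fixed constants $C,\Lambda$, together with the complete doubling $q$-Poincar\'e structure, where $q\le p$. It therefore yields \eqref{eq:asymptotic-lower} for every $f\in L^1_{\mathrm{loc}}(\mu)$, with a constant depending only on $p,\gamma,C,\Lambda$ and the geometric constants of $X$. Note that it imposes no restriction on the range of $\gamma$ beyond $\gamma<0$.

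We apply this to the function $f$ from Proposition~\ref{prop:counterexample}. By \eqref{eq:energy-zero}, $\mathcal E^\phi_{\lambda,\gamma,p}(f;X)\to0$ as $\lambda\to0^+$, so $\liminf_{\lambda\to L_\gamma}\mathcal E^\phi_{\lambda,\gamma,p}(f;X)=0<\infty$. Hence \eqref{eq:asymptotic-lower} gives $\mathcal E_p(f;X)\le C\cdot0=0$. This contradicts $\mathcal E_p(f;X)\in(0,\infty]$. Since $f\in L^1(\mu)\subset L^1_{\mathrm{loc}}(\mu)$, the hypotheses of Proposition~\ref{prop:curve-asymptotic} are indeed met, and we conclude that \eqref{eq:NPI} fails on $X$.

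The only point requiring care is that Proposition~\ref{prop:curve-asymptotic}(i) really uses nothing about $\gamma$ beyond negativity and the assumed inequality \eqref{eq:NPI}. Inspecting its proof confirms this: it uses \eqref{eq:NPI} on the balls $8B_i$ for small $r$, together with $\lambda_j\to0^+$, Lemma~\ref{lem:discrete-convolution}, and Lemma~\ref{lem:discrete-sobolev-closure}. None of these depend on the range of $\gamma$. The compactness of $X$ causes no issue either, because only balls with radius less than $\operatorname{diam}X$ are used, and this holds once $r$ is small.
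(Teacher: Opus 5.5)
Your proposal is correct and follows the paper's proof exactly. Assuming \eqref{eq:NPI} on the space from Proposition~\ref{prop:counterexample}, Proposition~\ref{prop:curve-asymptotic}(i) together with \eqref{eq:energy-zero} forces $\mathcal E_p(f;X)=0$, which contradicts $\mathcal E_p(f;X)\in(0,\infty]$; your extra check that Proposition~\ref{prop:curve-asymptotic}(i) uses only $\gamma<0$, the assumed inequality with fixed constants, and small balls $8B_i$ is accurate.
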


\begin{proof}
Let $(X,\rho,\mu)$, $f\in L^1(\mu)$, and $\phi$ be as in the
corresponding assertion of Proposition~\ref{prop:counterexample}.
If \eqref{eq:NPI} held on $X$, then, by
Proposition~\ref{prop:curve-asymptotic}(i) and \eqref{eq:energy-zero},
we would obtain $\mathcal E_p(f;X)=0$, which contradicts
$\mathcal E_p(f;X)\in(0,\infty]$.
This completes the proof of Corollary~\ref{cor:NPI-sharpness}.
\end{proof}

Now, we show that Proposition \ref{prop:counterexample} holds.
\begin{proof}[Proof of Proposition~\ref{prop:counterexample}]
We first consider (i) when $D=1$ and (ii) when $d\in(0,1]$.
In both cases, $\gamma\in(-\frac1p,0)$.
Let $(X,\rho,\mu):=([-1,1],|\cdot-\cdot|,\,d x)$ and
$f:=\mathbf 1_{(0,1]}$.
This compact space supports a weak $(1,1)$-Poincar\'e inequality.
Since $V(x,r)\sim r$
for any $x\in[-1,1]$ and $r\in(0,2)$, it has lower dimension $d$
and upper dimension $D$ under the stated parameter conditions.
For $\phi\in\{\rho,V\}$, we obtain
\[
 \mathcal E^\phi_{\lambda,\gamma,p}(f;X)
 \lesssim\lambda^p\int_0^1\int_0^1
 (u+v)^{p\gamma-1}\,d u\,d v
 \lesssim\lambda^p\int_0^2r^{p\gamma}\,d r
 \lesssim\lambda^p,
\]
where the implicit positive constant depends only on $p$ and $\gamma$.
Thus, \eqref{eq:energy-zero} holds. When $p\in(1,\infty)$,
suppose that $f$ admits a Haj\l asz upper gradient $g\in L^p(\mu)$,
and let $r\in(0,\frac12)$. By \eqref{eq:H-gradient}, we obtain
$1\leq4r[g(x)+g(y)]$ for almost every
$(x,y)\in(r,2r)\times(-2r,-r)$.
By integrating the $p$-th power and using that both intervals
have measure $r$, we find that
$r^{1-p}\lesssim\int_{\{r<|x|<2r\}}g(x)^p\,d x$,
where the implicit positive constant is independent of $r$.
Letting $r\to0^+$ contradicts $g\in L^p(\mu)$.
Thus, $f\notin\dot M^{1,p}(\mu)$.
Finally, when $p=1$, it is clear that $|Df|(X)=1$.
Therefore, $\mathcal E_p(f;X)\in(0,\infty]$.

It remains to consider (i) when $D\in(1,\infty)$ and (ii) when
$d\in(1,\infty)$. In the case (i), we take
$N:=D$ and $\kappa\in(-p\gamma,\min\{q,D\})$.
In the case (ii), we take
$N:=d$ and $\kappa\in(-dp\gamma,\min\{q,d\})$.
The ranges of $\gamma$ ensure that these choices are possible.
In both cases, $N\in(1,\infty)$, $d\leq N\leq D$, and
$\kappa\in(0,\min\{q,N\})$.
Let $s:=N-\kappa\in(0,N)$.
The following gluing construction borrows some ideas from
\cite[Remark~6.19(b)]{HeinonenKoskela98}.
By \cite[Theorem~2.6]{Laakso00}, we obtain a compact Ahlfors
$N$-regular space $(Y,\rho_Y,\nu)$ admitting pencils of curves
with uniform constants, which, combined with
\cite[Theorem~1.4]{FasslerOrponen19} (see also \cite{Semmes96}),
further implies that $Y$ supports
a weak $(1,1)$-Poincar\'e inequality.
Using \cite[Theorem~5.1]{MattilaSaaranen09}, we obtain a closed
Ahlfors $s$-regular subset $A\subset Y$ with
$\operatorname{diam} A\sim\operatorname{diam} Y$.

Let $Y_+$ and $Y_-$ be two disjoint copies of $Y$, and, for
$x\in Y$, denote its copies in $Y_+$ and $Y_-$ by $x_+$ and
$x_-$, respectively.
Let $X:=Y_+\cup_A Y_-$ be obtained by gluing the two copies along
$A$; that is, $a_+$ and $a_-$ are identified as one point for any
$a\in A$, while $x_+$ and $x_-$ remain distinct for any
$x\in Y\setminus A$. We regard $Y_+$ and $Y_-$ as subsets of $X$.
The metric $\rho$ on $X$ agrees with $\rho_Y$ on $Y_+$ and $Y_-$ and satisfies
\begin{equation}\label{eq:glued-distance}
 \rho(x_+,y_-)
 :=\inf_{a\in A}\left[\rho_Y(x,a)+\rho_Y(a,y)\right].
\end{equation}
We define the Borel measure $\mu$ on $X$ by
\[
 \mu(E):=
 \nu\left(\left\{y\in Y:y_+\in E\right\}\right)
 +\nu\left(\left\{y\in Y:y_-\in E\right\}\right)
\]
for any Borel set $E\subset X$.
Since $s\in(0,N)$, it follows that $\nu(A)=0$.
Clearly, $X$ is compact.
By \cite[Section~6.14, p.~43]{HeinonenKoskela98}, we find that
$(X,\rho,\mu)$ is Ahlfors $N$-regular.
By rescaling $\rho_Y$ and $\rho$, we may assume
$\operatorname{diam} X=1$, so that
\begin{equation}\label{eq:glued-volume}
 V(x,r)\sim r^N
\end{equation}
for any $x\in X$ and $r\in(0,1]$.
Since $d\leq N\leq D$, it follows that $\mu$ has lower dimension
$d$ and upper dimension $D$.
Let $\eta$ be an $s$-regular Borel measure on $A$, extended by zero
to $X$; see \cite[Definition~1.1]{MattilaSaaranen09}.
Fix $a\in A$ and $r\in(0,\operatorname{diam} Y)$.
For any countable ball cover
$\{B(x_i,r_i)\}_{i=1}^{\infty}$ of $A\cap B(a,r)$, the
$s$-regularity of $\eta$ and $\operatorname{diam} A\sim\operatorname{diam} Y$ give
 \begin{align*}
 r^s\lesssim\min\left\{r,\operatorname{diam} A\right\}^{s}
 \lesssim\eta(A\cap B(a,r))\leq\sum_{i=1}^{\infty}\eta(B(x_i,r_i))
 \lesssim\sum_{i=1}^{\infty}r_i^s.
 \end{align*}
By taking the infimum over these covers, we conclude that
$\mathcal H_\infty^s(A\cap B(a,r))\gtrsim r^s$, where
$\mathcal H_\infty^s$ denotes the Hausdorff $s$-content (see, for instance,
\cite[p.~31]{HeinonenKoskela98} for its precise definition).
Let $q':=\min\{q,N\}$.
Since the two copies of $Y$ are compact, it follows that they are
complete and locally compact. They are also Ahlfors $N$-regular.
By H\"older's inequality, we obtain a weak $(1,q')$-Poincar\'e
inequality on $Y_+$ and $Y_-$ from their weak
$(1,1)$-Poincar\'e inequality.
Since $A$ is closed in both copies, the identification along $A$
is isometric, and the preceding content estimate holds with
$N-q'<s<N$, it follows from
\cite[Theorem~6.15]{HeinonenKoskela98} that $X$ supports a weak
$(1,q')$-Poincar\'e inequality in the formulation for continuous
functions. Since $X$ is complete and satisfies the doubling
condition, it follows from \cite[Theorem~2]{Keith03} that
\eqref{PS} holds with exponent $q'$, also when $q'=1$.
Finally, by H\"older's inequality, we obtain a weak
$(1,q)$-Poincar\'e inequality on $X$.

Let $f:=\mathbf 1_{Y_+\setminus A}\in L^1(\mu)$, and, for $r\in(0,1]$, write
$A_r:=\{x\in Y:\inf_{a\in A}\rho_Y(x,a)\leq r\}$.
Let $\{a_i\}_{i=1}^m\subset A$ be a maximal $r$-separated set.
From the $s$-regularity of $\eta$, $\operatorname{diam} A\sim\operatorname{diam} Y\sim1$, and
the disjointness of the balls $B(a_i,\frac r2)$, we deduce that
$mr^s\lesssim\eta(A)\lesssim1$.
Since $A_r\subset\bigcup_{i=1}^m\{x\in Y:\rho_Y(x,a_i)<2r\}$,
the $N$-regularity of $\nu$ gives
\[
 \nu(A_r)\lesssim mr^N\lesssim r^{N-s}=r^\kappa.
\]
By \eqref{eq:glued-distance}, we find that $x\in A_r$ and
$\rho_Y(x,y)\leq r$ whenever $\rho(x_+,y_-)\leq r$.
The $N$-regularity of $Y$ then gives
\[
 \iint_{\genfrac{}{}{0pt}{}{x,y\in Y}{\rho(x_+,y_-)\leq r}}
 \,d\nu(x)\,d\nu(y)
 \lesssim r^N\nu(A_r)\lesssim r^{N+\kappa}.
\]
Since $\nu(A)=0$, it follows that only pairs from different
copies contribute to the energies. By splitting these pairs
according to $2^{-j-1}<\rho(x_+,y_-)\leq2^{-j}$ with
$j\in\mathbb Z_+$ and using \eqref{eq:glued-volume}, we obtain
\begin{align*}
 \mathcal E^\rho_{\lambda,\gamma,p}(f;X)
 &\lesssim\lambda^p\sum_{j\in\mathbb Z_+}
 (2^{-j})^{p\gamma-N}
 \iint_{\genfrac{}{}{0pt}{}{x,y\in Y}{
 2^{-j-1}<\rho(x_+,y_-)\leq2^{-j}}}
 \,d\nu(x)\,d\nu(y)\\
 &\lesssim\lambda^p\sum_{j\in\mathbb Z_+}
 (2^{-j})^{p\gamma-N}(2^{-j})^{N+\kappa}
 =\lambda^p\sum_{j\in\mathbb Z_+}2^{-j(p\gamma+\kappa)}
\end{align*}
and
\begin{align*}
 \mathcal E^V_{\lambda,\gamma,p}(f;X)
 &\lesssim\lambda^p\sum_{j\in\mathbb Z_+}
 (2^{-j})^{Np\gamma-N}
 \iint_{\genfrac{}{}{0pt}{}{x,y\in Y}{
 2^{-j-1}<\rho(x_+,y_-)\leq2^{-j}}}
 \,d\nu(x)\,d\nu(y)\\
 &\lesssim\lambda^p\sum_{j\in\mathbb Z_+}
 (2^{-j})^{Np\gamma-N}(2^{-j})^{N+\kappa}
 =\lambda^p\sum_{j\in\mathbb Z_+}2^{-j(Np\gamma+\kappa)}.
\end{align*}
From the choice of $\kappa$, it follows that $p\gamma+\kappa\in(0,\infty)$
in the case (i) and $Np\gamma+\kappa\in(0,\infty)$ in the case (ii).
Thus, in the corresponding case,
$\mathcal E^\phi_{\lambda,\gamma,p}(f;X)\lesssim\lambda^p\to0$ as
$\lambda\to0^+$, where the implicit positive constant depends only on
$p$, $\gamma$, $N$, $\kappa$, and the regularity constants of $Y$ and $A$.

Fix a ball $B=B(a,r_B)$ with $a\in A$ and $r_B\in(0,1)$.
The map exchanging $x_+$ and $x_-$ for any $x\in Y$
preserves distances and $\mu$. Since it fixes $a$, it follows that
$B$ is mapped onto itself. Using this and $\mu(A)=0$, we obtain
$\mu(B\cap Y_+)=\mu(B\cap Y_-)=\frac{\mu(B)}{2}$. Thus,
\[
 \fint_B\fint_B|f(x)-f(y)|^p\,d\mu(x)\,d\mu(y)=\frac12.
\]
This, together with \eqref{eq:H-gradient}, further implies that
$\int_Xg^p\,d\mu\gtrsim\frac{\mu(B)}{r_B^p}$ for any
Haj\l asz upper gradient $g\in L^p(\mu)$ when $p\in(1,\infty)$.
When $p=1$, we have $q=1$. If $f\in\dot{\mathrm{BV}}(\mu)$, then the
Poincar\'e inequality in \cite[Remark~3.5]{Miranda03} gives
$|Df|(X)\gtrsim\frac{\mu(B)}{r_B}$.
Thus, $\mathcal E_p(f;X)\in(0,\infty]$.
This completes the proof of Proposition~\ref{prop:counterexample}.
\end{proof}

\smallskip
\noindent\textbf{Acknowledgements}\quad
The authors acknowledge the use of AI tools during the exploratory stage
of this project. All mathematical arguments and proofs in the final manuscript were checked and
written by the authors.

Yirui Zhao would like to thank Yinqin Li for some helpful discussions.

\bigskip
\noindent\begin{minipage}{\linewidth}
\noindent Feng Dai
\smallskip

\noindent Department of Mathematical and
Statistical Sciences, University of Alberta
Edmonton, Alberta T6G 2G1, Canada

\smallskip

\noindent\textit{E-mail:} \texttt{fdai@ualberta.ca}
\end{minipage}

\bigskip

\noindent\begin{minipage}{\linewidth}
\noindent Tuomas P.\ Hyt\"onen

\smallskip

\noindent Department of Mathematics and Systems Analysis, Aalto University,
P.O. Box 11100 (Otakaari~1), \mbox{FI-00076} Aalto, Finland

\smallskip

\noindent\textit{E-mail:} \texttt{tuomas.hytonen@aalto.fi}
\end{minipage}

\bigskip

\noindent\begin{minipage}{\linewidth}
\noindent Dachun Yang,
Wen Yuan, and Yirui Zhao

\medskip

\noindent  Laboratory of Mathematics and Complex Systems
(Ministry of Education of China),
School of Mathematical Sciences, Institute for Advanced Study,
Beijing Normal University,
Beijing 100875, The People's Republic of China

\smallskip

\noindent\textit{E-mails:} \texttt{dcyang@bnu.edu.cn} (D. Yang)

\noindent\phantom{\textit{E-mails:}} \texttt{wenyuan@bnu.edu.cn} (W. Yuan)

\noindent\phantom{\textit{E-mails:}} \texttt{yiruizhao@mail.bnu.edu.cn} (Y. Zhao)
\end{minipage}


\begin{thebibliography}{99}
\setlength{\itemsep}{-3.6pt}\setlength{\parsep}{0pt}

\bibitem{Barbieri2011}
D. Barbieri,
Approximations of Sobolev norms in Carnot groups,
Commun. Contemp. Math. 13 (2011), 765--794.

% Bibliography spacing is set uniformly at the start of the list.

\bibitem{bb11}
A. Bj\"orn and J. Bj\"orn,
Nonlinear Potential Theory on Metric Spaces,
EMS Tracts in Mathematics 17, European Mathematical Society (EMS), Z\"urich, 2011.

% Bibliography spacing is set uniformly at the start of the list.

\bibitem{BBM2001}
J. Bourgain, H. Brezis and P. Mironescu,
Another look at Sobolev spaces,
in: Optimal Control and Partial Differential Equations,
pp. 439--455, IOS, Amsterdam, 2001.

% Bibliography spacing is set uniformly at the start of the list.

\bibitem{BBM2002}
J. Bourgain, H. Brezis and P. Mironescu,
Limiting embedding theorems for $W^{s,p}$ when $s\uparrow1$ and applications,
J. Anal. Math. 87 (2002), 77--101.

% Bibliography spacing is set uniformly at the start of the list.

\bibitem{BourgainNguyen2006}
J. Bourgain and H.-M. Nguyen,
A new characterization of Sobolev spaces,
C. R. Math. Acad. Sci. Paris 343 (2006), 75--80.

% Bibliography spacing is set uniformly at the start of the list.

\bibitem{BrezisNguyenVMO}
H. Brezis and H.-M. Nguyen,
On a new class of functions related to VMO,
C. R. Math. Acad. Sci. Paris 349 (2011), 157--160.

% Bibliography spacing is set uniformly at the start of the list.

\bibitem{BrezisNguyenImages}
H. Brezis and H.-M. Nguyen,
Non-local functionals related to the total variation and connections with
image processing,
Ann. PDE 4 (2018), Paper No. 9, 77 pp.

% Bibliography spacing is set uniformly at the start of the list.

\bibitem{BrezisNguyen2020}
H. Brezis and H.-M. Nguyen,
Non-local, non-convex functionals converging to Sobolev norms,
Nonlinear Anal. 191 (2020), Paper No. 111626, 9 pp.

% Bibliography spacing is set uniformly at the start of the list.

\bibitem{BSVY}
H. Brezis, A. Seeger, J. Van Schaftingen and P.-L. Yung,
Families of functionals representing Sobolev norms,
Anal. PDE 17 (2024), 943--979.

% Bibliography spacing is set uniformly at the start of the list.

\bibitem{BVY2021}
H. Brezis, J. Van Schaftingen and P.-L. Yung,
A surprising formula for Sobolev norms,
Proc. Natl. Acad. Sci. USA 118 (2021), Paper No. e2025254118, 6 pp.

% Bibliography spacing is set uniformly at the start of the list.

\bibitem{BVYLorentz2021}
H. Brezis, J. Van Schaftingen and P.-L. Yung,
Going to Lorentz when fractional Sobolev, Gagliardo and Nirenberg estimates fail,
Calc. Var. Partial Differential Equations 60 (2021), Paper No. 129, 12 pp.

% Bibliography spacing is set uniformly at the start of the list.

\bibitem{BursteddeWilcoxGhattas11}
C. Burstedde, L. C. Wilcox and O. Ghattas,
p4est: Scalable algorithms for parallel adaptive mesh refinement
on forests of octrees,
SIAM J. Sci. Comput. 33 (2011), 1103--1133.

% Bibliography spacing is set uniformly at the start of the list.

\bibitem{CaputoCavallucci25}
E. Caputo and N. Cavallucci,
A geometric approach to Poincar\'e inequality and Minkowski content
of separating sets,
Int. Math. Res. Not. IMRN 2025 (2025), Paper No. rnae276, 30 pp.

% Bibliography spacing is set uniformly at the start of the list.

\bibitem{CuiLamLu2014}
X. Cui, N. Lam and G. Lu,
New characterizations of Sobolev spaces on the Heisenberg group,
J. Funct. Anal. 267 (2014), 2962--2994.

% Bibliography spacing is set uniformly at the start of the list.

\bibitem{DaiEtAlOperators2026}
F. Dai, Y. Li, D. Yang, W. Yuan and Y. Zhao,
Sharp weak-type estimate for local lifted Hardy--Littlewood
maximal operators with applications to generators of linear
operator families and Hardy(--Sobolev) spaces,
Adv. Math. 490 (2026), Paper No. 110822, 55 pp.

% Bibliography spacing is set uniformly at the start of the list.

\bibitem{dlyyz26b}
F. Dai, Y. Li, D. Yang, W. Yuan and Y. Zhao,
Characterizations and their applications of Sobolev seminorms via
lifted sharp maximal functions on complete Poincar\'e spaces,
Adv. Math. 502 (2026), part B, Paper No. 111153, 71 pp.

% Bibliography spacing is set uniformly at the start of the list.

\bibitem{DaiEtAlMetric2022}
F. Dai, X. Lin, D. Yang, W. Yuan and Y. Zhang,
Poincar\'e inequality meets Brezis--Van Schaftingen--Yung
formula on metric measure spaces,
J. Funct. Anal. 283 (2022), Paper No. 109645, 52 pp.

% Bibliography spacing is set uniformly at the start of the list.

\bibitem{DaiEtAlBall2023}
F. Dai, X. Lin, D. Yang, W. Yuan and Y. Zhang,
Brezis--Van Schaftingen--Yung formulae in ball Banach function spaces
with applications to fractional Sobolev and Gagliardo--Nirenberg inequalities,
Calc. Var. Partial Differential Equations 62 (2023), Paper No. 56, 73 pp.

% Bibliography spacing is set uniformly at the start of the list.

\bibitem{DYYZTruncated26}
F. Dai, D. Yang, W. Yuan and Y. Zhao,
Sharp truncated difference estimates for nonlocal nonconvex functionals
defined by superlevel sets,
arXiv:2610.04259v2.

% Bibliography spacing is set uniformly at the start of the list.

\bibitem{DiMarinoSquassina19}
S. Di Marino and M. Squassina,
New characterizations of Sobolev metric spaces,
J. Funct. Anal. 276 (2019), 1853--1874.

% Bibliography spacing is set uniformly at the start of the list.

\bibitem{DominguezMilman}
O. Dom\'{i}nguez and M. Milman,
New Brezis--Van Schaftingen--Yung--Sobolev type inequalities connected with
maximal inequalities and one parameter families of operators,
Adv. Math. 411 (2022), Paper No. 108774, 76 pp.

% Bibliography spacing is set uniformly at the start of the list.

\bibitem{DominguezEtAlBesov2023}
O. Dom\'inguez, A. Seeger, B. Street, J. Van Schaftingen and P.-L. Yung,
Spaces of Besov-Sobolev type and a problem on nonlinear approximation,
J. Funct. Anal. 284 (2023), Paper No. 109775, 50 pp.

% Bibliography spacing is set uniformly at the start of the list.

\bibitem{FasslerOrponen19}
K. F\"assler and T. Orponen,
Metric currents and the Poincar\'e inequality,
Calc. Var. Partial Differential Equations 58 (2019), Paper No. 69, 20 pp.

% Bibliography spacing is set uniformly at the start of the list.

\bibitem{Frank2024}
R. L. Frank,
A characterization of $\dot W^{1,p}(\mathbb R^d)$,
Pure Appl. Funct. Anal. 9 (2024), 53--68.

% Bibliography spacing is set uniformly at the start of the list.

\bibitem{gp25}
M. Gobbino and N. Picenni,
Gamma-liminf estimate for a class of non-local approximations of Sobolev
and BV norms,
J. Funct. Anal. 289 (2025), Paper No. 111106, 25 pp.

% Bibliography spacing is set uniformly at the start of the list.

\bibitem{HanXuZhu2025}
B.-X. Han, Z.-F. Xu and Z.-N. Zhu,
A new characterization of Sobolev spaces on Lipschitz differentiability spaces,
arXiv:2504.16657.

% Bibliography spacing is set uniformly at the start of the list.

\bibitem{HKT17}
T. Heikkinen, P. Koskela and H. Tuominen,
Approximation and quasicontinuity of Besov and Triebel--Lizorkin functions,
Trans. Amer. Math. Soc. 369 (2017), 3547--3573.

% Bibliography spacing is set uniformly at the start of the list.

\bibitem{HeinonenKoskela98}
J. Heinonen and P. Koskela,
Quasiconformal maps in metric spaces with controlled geometry,
Acta Math. 181 (1998), 1--61.

% Bibliography spacing is set uniformly at the start of the list.

\bibitem{HKST}
J. Heinonen, P. Koskela, N. Shanmugalingam and J. T. Tyson,
Sobolev Spaces on Metric Measure Spaces:
An Approach Based on Upper Gradients,
New Mathematical Monographs 27, Cambridge University Press, Cambridge, 2015.

% Bibliography spacing is set uniformly at the start of the list.

\bibitem{HuEtAlHigherOrder2025}
P. Hu, Y. Li, D. Yang, W. Yuan and Y. Zhang,
Sharp Brezis--Seeger--Van Schaftingen--Yung formulae for higher-order gradients
in ball Banach function spaces,
J. Geom. Anal. 35 (2025), Paper No. 388, 76 pp.

% Bibliography spacing is set uniformly at the start of the list.

\bibitem{HytKairema}
T. Hyt\"onen and A. Kairema,
Systems of dyadic cubes in a doubling metric space,
Colloq. Math. 126 (2012), 1--33.

% Bibliography spacing is set uniformly at the start of the list.

\bibitem{HK:W1p}
T. Hyt\"onen and R. Korte,
Characterisations of Sobolev spaces and constant functions over metric spaces,
arXiv:2508.07801v3.

% Bibliography spacing is set uniformly at the start of the list.

\bibitem{HytonenYangYuanZhao2026}
T. P. Hyt\"onen, D. Yang, W. Yuan and Y. Zhao,
Weak-type characterizations of Sobolev and bounded variation spaces on
metric measure spaces,
arXiv:2608.24106.

% Bibliography spacing is set uniformly at the start of the list.

\bibitem{Keith03}
S. Keith,
Modulus and the Poincar\'e inequality on metric measure spaces,
Math. Z. 245 (2003), 255--292.

% Bibliography spacing is set uniformly at the start of the list.

\bibitem{Laakso00}
T. J. Laakso,
Ahlfors $Q$-regular spaces with arbitrary $Q>1$ admitting weak
Poincar\'e inequality,
Geom. Funct. Anal. 10 (2000), 111--123.

% Bibliography spacing is set uniformly at the start of the list.

\bibitem{LahtiPinamontiZhou2024}
P. Lahti, A. Pinamonti and X. Zhou,
BV functions and nonlocal functionals in metric measure spaces,
J. Geom. Anal. 34 (2024), Paper No. 318, 34 pp.

% Bibliography spacing is set uniformly at the start of the list.

\bibitem{LiEtAlWeighted2026}
Y. Li, D. Yang, W. Yuan,
Y. Zhang and Y. Zhao,
Sharp weighted
Cohen--Dahmen--Daubechies--DeVore inequality
with applications to (weighted) critical Sobolev spaces,
Gagliardo--Nirenberg inequalities,
and Muckenhoupt weights, Sci. China
Math. (2026), DOI: 10.1007/s11425-025-2527-6.

% Bibliography spacing is set uniformly at the start of the list.

\bibitem{LinYangYangYuanZhang}
X. Lin, D. Yang, S. Yang, W. Yuan and Y. Zhang,
$\Gamma$-convergence of weak-type nonlocal functionals on bounded domains,
arXiv:2608.18414.

% Bibliography spacing is set uniformly at the start of the list.

\bibitem{LinEtAlFullRange2026}
X. Lin, D. Yang, S. Yang, W. Yuan and Y. Zhang,
Full-range $\Gamma$-convergence of weak-type nonlocal functionals,
arXiv:2610.07001.

% Bibliography spacing is set uniformly at the start of the list.

\bibitem{LinEtAlCompactness2026}
X. Lin, D. Yang, W. Yuan and Y. Zhang,
Compactness and its applications of Sobolev spaces associated with
ball Banach function spaces,
arXiv:2606.23137.

% Bibliography spacing is set uniformly at the start of the list.

\bibitem{MallickNguyen2020}
A. Mallick and H.-M. Nguyen,
Exponential integrability in the spirit of Moser--Trudinger's inequalities
of functions with finite non-local, non-convex energy,
Pure Appl. Funct. Anal. 5 (2020), 1153--1165.

% Bibliography spacing is set uniformly at the start of the list.

\bibitem{MattilaSaaranen09}
P. Mattila and P. Saaranen,
Ahlfors--David regular sets and bilipschitz maps,
Ann. Acad. Sci. Fenn. Math. 34 (2009), 487--502.

% Bibliography spacing is set uniformly at the start of the list.

\bibitem{Miranda03}
M. Miranda Jr.,
Functions of bounded variation on ``good'' metric spaces,
J. Math. Pures Appl. (9) 82 (2003), 975--1004.

% Bibliography spacing is set uniformly at the start of the list.

\bibitem{Nguyen2006}
H.-M. Nguyen,
Some new characterizations of Sobolev spaces,
J. Funct. Anal. 237 (2006), 689--720.

% Bibliography spacing is set uniformly at the start of the list.

\bibitem{Nguyen2008}
H.-M. Nguyen,
Further characterizations of Sobolev spaces,
J. Eur. Math. Soc. (JEMS) 10 (2008), 191--229.

% Bibliography spacing is set uniformly at the start of the list.

\bibitem{Nguyen2011}
H.-M. Nguyen,
Some inequalities related to Sobolev norms,
Calc. Var. Partial Differential Equations 41 (2011), 483--509.

% Bibliography spacing is set uniformly at the start of the list.

\bibitem{NguyenDegree2017}
H.-M. Nguyen,
A refined estimate for the topological degree,
C. R. Math. Acad. Sci. Paris 355 (2017), 1046--1049.

% Bibliography spacing is set uniformly at the start of the list.

\bibitem{NguyenSurvey}
H.-M. Nguyen,
Characterizations of the Sobolev norms and the total variation via
nonlocal functionals, and related problems,
C. R. Math. Acad. Sci. Paris 363 (2025), 1429--1455.

% Bibliography spacing is set uniformly at the start of the list.

\bibitem{NguyenEtAlMagnetic2018}
H.-M. Nguyen, A. Pinamonti, M. Squassina and E. Vecchi,
New characterizations of magnetic Sobolev spaces,
Adv. Nonlinear Anal. 7 (2018), 227--245.

% Bibliography spacing is set uniformly at the start of the list.

\bibitem{NguyenSquassinaAnisotropic2019}
H.-M. Nguyen and M. Squassina,
On anisotropic Sobolev spaces,
Commun. Contemp. Math. 21 (2019), Paper No. 1850017, 13 pp.

% Bibliography spacing is set uniformly at the start of the list.

\bibitem{NguyenSquassina2019}
H.-M. Nguyen and M. Squassina,
On Hardy and Caffarelli--Kohn--Nirenberg inequalities,
J. Anal. Math. 139 (2019), 773--797.

% Bibliography spacing is set uniformly at the start of the list.

\bibitem{Picenni2026}
N. Picenni,
Lower bounds and Poincar\'e inequality for the
Brezis--Seeger--Van Schaftingen--Yung functionals,
arXiv:2610.05929.

% Bibliography spacing is set uniformly at the start of the list.

\bibitem{Poliakovsky2022}
A. Poliakovsky,
Some remarks on a formula for
Sobolev norms due to Brezis, Van Schaftingen and Yung,
J. Funct. Anal. 282 (2022), Paper No. 109312, 47 pp.

% Bibliography spacing is set uniformly at the start of the list.

\bibitem{Ponce2004}
A. C. Ponce,
An estimate in the spirit of Poincar\'e's inequality,
J. Eur. Math. Soc. (JEMS) 6 (2004), 1--15.

% Bibliography spacing is set uniformly at the start of the list.

\bibitem{Semmes96}
S. Semmes,
Finding curves on general spaces through quantitative topology,
with applications to Sobolev and Poincar\'e inequalities,
Selecta Math. (N.S.) 2 (1996), 155--295.

% Bibliography spacing is set uniformly at the start of the list.

\bibitem{SundarSampathBiros08}
H. Sundar, R. S. Sampath and G. Biros,
Bottom-up construction and $2{:}1$ balance refinement of linear octrees
in parallel,
SIAM J. Sci. Comput. 30 (2008), 2675--2708.

% Bibliography spacing is set uniformly at the start of the list.

\bibitem{yz11}
D. Yang and Y. Zhou,
New properties of Besov and Triebel--Lizorkin spaces on RD-spaces,
Manuscripta Math. 134 (2011), 59--90.

% Bibliography spacing is set uniformly at the start of the list.

\bibitem{ZhaoEtAlFrank2024}
Y. Zhao, Y. Li, D. Yang, W. Yuan and Y. Zhang,
Generalized Frank characterizations of Muckenhoupt weights and
homogeneous ball Banach Sobolev spaces,
Adv. Math. 458 (2024), part A, Paper No. 109957, 63 pp.

\end{thebibliography}
\end{document}